\documentclass[11pt]{amsart}

\usepackage[a4paper,margin=2.8cm]{geometry}
\usepackage{microtype}
\usepackage{xcolor}

\usepackage{amsmath,amssymb,amsthm,mathtools}
\usepackage{mathabx}
\usepackage{mathrsfs}
\usepackage[shortlabels]{enumitem}

\usepackage[backref=page]{hyperref}
\hypersetup{
	colorlinks=true,
	linkcolor=blue,
	citecolor=blue,
	urlcolor=blue,
}

\numberwithin{equation}{section}
\allowdisplaybreaks[3]

\newtheorem{theorem}{Theorem}[section]
\newtheorem{proposition}[theorem]{Proposition}
\newtheorem{lemma}[theorem]{Lemma}
\newtheorem{corollary}[theorem]{Corollary}

\theoremstyle{plain}
\newtheorem{definition}[theorem]{Definition}

\theoremstyle{remark}
\newtheorem{remark}[theorem]{Remark}

\newcommand{\D}{\mathfrak{D}}
\newcommand{\Esn}{\mathcal{E}_{s,n,\iota}}

\newcommand{\N}{\mathbb{N}}
\newcommand{\Qmc}{\mathcal{Q}}
\newcommand{\R}{\mathbb{R}}
\newcommand{\Rd}{\mathbb{R}^{d}}
\newcommand{\Sd}{\mathbb{S}^{d-1}}
\newcommand{\Tmc}{\mathcal{T}}
\newcommand{\Z}{\mathbb{Z}}

\newcommand{\BMO}{\operatorname{BMO}}

\newcommand{\Tbmax}{T^{*}_{\Omega,b}}
\newcommand{\Tmax}{T^{*}_{\Omega}}

\newcommand{\card}{\operatorname{card}}
\newcommand{\diam}{\operatorname{diam}}
\newcommand{\dist}{\operatorname{dist}}
\newcommand{\pv}{\operatorname*{p.v.}}
\newcommand{\supp}{\operatorname{supp}}

\newcommand{\eps}{\varepsilon}
\newcommand{\PsiComp}{\overline{\Psi}}
\newcommand{\Bnorm}{\norm{b}_{\BMO}}

\newcommand{\dd}{\,\mathrm{d}}

\newcommand{\abs}[1]{\left\lvert #1\right\rvert}
\newcommand{\brak}[1]{\left[#1\right]}
\newcommand{\inner}[2]{\langle #1,#2 \rangle}
\newcommand{\norm}[1]{\left\lVert #1\right\rVert}
\newcommand{\paren}[1]{\left(#1\right)}
\newcommand{\set}[1]{\left\{#1\right\}}

\title[Weak Endpoint Estimate for   Commutators...]{Weak Endpoint Estimate for   Commutators of Rough Maximal Singular Integral operators}

\author{Shenglong Lin}
\address{Shenglong Lin: 
	School of Mathematical Sciences, Beijing Normal University, Laboratory of Mathematics and Complex Systems, Ministry of Education, Beijing 100875, People’s Republic of China}
\email{linshenglong@mail.bnu.edu.cn}

\author{Qingying Xue$^{*}$}
\address{Qingying Xue:
	School of Mathematical Sciences, Beijing Normal University, Laboratory of Mathematics and Complex Systems, Ministry of Education, Beijing 100875, People’s Republic of China}
\email{qyxue@bnu.edu.cn}

\thanks{The authors acknowledge financial support from the National Key R\&D Program of China (No. 2020YFA0712900) and NNSF of China (No. 12671114)}

\thanks{$^{*}$ Corresponding author: Qingying Xue. }
\date{\today}

\subjclass[2020]{Primary 42B20, Secondary 42B35}
\keywords{Rough singular integral, maximal truncation, commutator, BMO, weak-type endpoint estimate, $L\log L$.}

\begin{document}
	
	\begin{abstract}
		Let $d\geq 2$ and $T^*_{\Omega,b}$ be the commutator  of the rough maximal singular integral on
		$\mathbb{R}^d$ defined by
	$$
		T^*_{\Omega,b}f(x)
		=
		\sup_{\varepsilon>0}
		\left|
		\int_{|x-y|>\varepsilon}
		\bigl(b(x)-b(y)\bigr)
		\frac{\Omega(x-y)}{|x-y|^d}f(y)\,dy
		\right|.$$
		Assume that $\Omega\in L(\log L)^2(\mathbb{S}^{d-1})$ has mean zero and
		that $b\in\operatorname{BMO}(\mathbb{R}^d)$. We prove that, for every
		$\lambda>0$,
		\[
		\bigl|\{x\in\mathbb{R}^d:T^*_{\Omega,b}f(x)>\lambda\}\bigr|
		\lesssim_{\Omega,b}
		\int_{\mathbb{R}^d}
		\frac{|f(x)|}{\lambda}
		\log\!\left(e+\frac{|f(x)|}{\lambda}\right)\,dx.
		\]
		The argument starts from a Calder\'on-Zygmund decomposition of $f$ and a
		dyadic linearization of the maximal truncation. A decomposition of $\Omega$
		by size, followed by regularization of the spatial cutoffs and a microlocal
		decomposition, reduces the problem to a family of estimates with quantitative
		decay. The required decay follows by interpolating localized $L^1$ and $L^3$
		bounds and using an analytic-family argument for the commutators.
	\end{abstract}
	
\maketitle
\tableofcontents
	
\section{Introduction}\label{sec:introduction}

The main purpose of this paper is to establish an endpoint estimate for
the commutators of maximal singular integral operators under suitable integrability assumptions on $\Omega$ and for
$b\in\operatorname{BMO}(\mathbb{R}^d)$.  We begin with the background of some related operators.

\subsection{Background}

Let $d\geq 2$. Let $\Omega$ be a measurable function on
$\mathbb{R}^{d}\setminus\{0\}$ that is homogeneous of degree zero and has mean
zero on the sphere:
\begin{equation}\label{eq:omega-assumptions}
	\Omega(rx)=\Omega(x)
	\quad (r>0),\qquad
	\int_{\mathbb{S}^{d-1}}\Omega(\theta)\,\dd\sigma(\theta)=0.
\end{equation}
The associated rough homogeneous singular integral is defined by
\begin{equation}\label{eq:rough-singular-integral}
	T_{\Omega}f(x)
	\coloneqq
	\pv\int_{\mathbb{R}^{d}}
	\frac{\Omega(x-y)}{|x-y|^{d}}f(y)\,\dd y.
\end{equation}
The adjective \emph{rough} refers to the absence of regularity assumptions on
the angular part $\Omega$. Strong $L^p$ estimates persist under weak angular
hypotheses, whereas endpoint bounds are far more sensitive to this roughness.

The $L^p$ theory of \eqref{eq:rough-singular-integral} originates in the work
of Calder\'on and Zygmund \cite{CalderonZygmund1956}. Subsequent developments
include the Hardy-space criterion of Connett \cite{C1979}, Ricci and Weiss
\cite{RicciWeiss1979}, the Fourier-transform method of Duoandikoetxea and
Rubio de Francia \cite{DuoandikoetxeaRubio1986}, and the estimates for
maximal truncations obtained by Grafakos and Stefanov
\cite{GrafakosStefanov1998}. The endpoint theory is subtler. Christ's work
\cite{Christ1988} introduced fundamental methods for weak endpoint bounds
for rough operators. Christ and Rubio de Francia
\cite{ChristRubio1988} proved the weak type $(1,1)$ estimate for
$T_\Omega$ under the condition
$\Omega\in L\log L(\mathbb{S}^{d-1})$ for $2\leq d\leq 5$; independently,
Hofmann \cite{Hofmann1988} treated the case $d=2$ under the stronger
assumption $\Omega\in L^q(\mathbb{S}^{1})$, $q>1$. Seeger
\cite{Seeger1996} proved the $L\log L$ theorem in every dimension:
\begin{equation}\label{eq:Seeger-weak11}
	\bigl|\{x\in\mathbb{R}^{d}:|T_{\Omega}f(x)|>\lambda\}\bigr|
	\lesssim_{\Omega}
	\frac{\|f\|_{L^{1}(\mathbb{R}^{d})}}{\lambda},
	\qquad \lambda>0,
\end{equation}
whenever $\Omega\in L\log L(\mathbb{S}^{d-1})$. Tao
\cite{Tao1999} subsequently extended this endpoint principle to homogeneous
convolution operators on homogeneous groups. The condition $L\log L$ is
therefore the natural benchmark for rough singular integrals; see also
\cite{DingLai2019,Bhojak2024} for later refinements and sharpness phenomena.

We next recall the corresponding endpoint theory for commutators. For
$b\in L^1_{\mathrm{loc}}(\mathbb{R}^{d})$, write
\[
	b_Q\coloneqq \frac{1}{|Q|}\int_Q b(x)\,\dd x,
	\qquad
	\|b\|_{\BMO}
	\coloneqq
	\sup_Q\frac{1}{|Q|}\int_Q|b(x)-b_Q|\,\dd x.
\]
Here and below, suprema indexed by $Q$ are taken over cubes in
$\mathbb{R}^{d}$.
The John-Nirenberg inequality \cite{JohnNirenberg1961} gives exponential
integrability of $b-b_Q$ when $b\in\BMO$. For a linear operator $T$, its
commutator with symbol $b$ is
\[
	[b,T]f\coloneqq b\,Tf-T(bf).
\]
Coifman, Rochberg and Weiss \cite{CoifmanRochbergWeiss1976} identified BMO
as the natural symbol class for the strong $L^p$ theory of
Calder\'on-Zygmund commutators. At $p=1$, weak type $(1,1)$ must be
replaced by an Orlicz endpoint. If
\[
	\Phi(t)\coloneqq t\log(e+t),\qquad t\geq 0,
\]
then P\'erez \cite{Perez1995} proved
\begin{equation}\label{eq:Perez-endpoint}
	\bigl|\{x\in\mathbb{R}^{d}:|[b,T]f(x)|>\lambda\}\bigr|
	\lesssim_{T,b}
	\int_{\mathbb{R}^{d}}
	\Phi\left(\frac{|f(x)|}{\lambda}\right)\,\dd x
\end{equation}
for Calder\'on-Zygmund operators $T$ and $b\in\BMO$.

For the rough operator $T_\Omega$, the commutator takes the form
\begin{equation}\label{eq:rough-commutator}
	T_{\Omega,b}f(x)
	\coloneqq [b,T_\Omega]f(x)
	=
	\pv\int_{\mathbb{R}^{d}}
	\bigl(b(x)-b(y)\bigr)
	\frac{\Omega(x-y)}{|x-y|^{d}}f(y)\,\dd y.
\end{equation}
Hu \cite{Hu2003} established strong $L^p$ bounds for
$T_{\Omega,b}$, and for its maximal truncated analogue, under the angular
condition $\Omega\in L(\log L)^2(\mathbb{S}^{d-1})$. At the endpoint,
Lan, Tao and Hu \cite{LanTaoHu2020} first obtained an $L\log L$ estimate
under an $L^q$ assumption on $\Omega$. Hu and Tao \cite{HuTao2023} reached
the rougher class $L(\log L)^2$ and proved
\begin{equation}\label{eq:HuTao-endpoint}
	\bigl|\{x\in\mathbb{R}^{d}:|T_{\Omega,b}f(x)|>\lambda\}\bigr|
	\lesssim_{\Omega,b}
	\int_{\mathbb{R}^{d}}
	\Phi\left(\frac{|f(x)|}{\lambda}\right)\,\dd x.
\end{equation}
Thus the additional logarithm in the angular hypothesis compensates for the
interaction between the rough kernel and the oscillation of the BMO symbol.

Now, we recall some background for the associated maximal singular integral operator.
For $\varepsilon>0$, define
\[
	T_{\Omega,\varepsilon}f(x)
	\coloneqq
	\int_{|x-y|>\varepsilon}
	\frac{\Omega(x-y)}{|x-y|^{d}}f(y)\,\dd y,
	\qquad
	\Tmax f(x)
	\coloneqq
	\sup_{\varepsilon>0}|T_{\Omega,\varepsilon}f(x)|.
\]
The passage from $T_\Omega$ to $\Tmax$ is not formal. The supremum over
$\varepsilon$ destroys linearity, and estimates for the principal value do
not by themselves control the entire family of truncations. This difficulty
also appears in jump and variation inequalities
\cite{DingHongLiu2017,ChenDingHongLiu2021}.

Endpoint estimates for $\Tmax$ remained open even for bounded angular
functions. Di Plinio, Hyt\"onen and Li \cite{DiPlinioHytonenLi2020}
obtained sparse bounds for bounded $\Omega$, while Bhojak and Mohanty
\cite{BhojakMohanty2023} proved an $L\log\log L$-type estimate when
$\Omega\in L\log L$. Hu and Tao \cite{HuTao2024} later established a
bilinear sparse bound with $L\log\log L$ averages. Lai
\cite{Lai2025} recently resolved the weak endpoint problem:
\begin{equation}\label{eq:Lai-maximal-weak11}
	\bigl|\{x\in\mathbb{R}^{d}:T^*_{\Omega} f(x)>\lambda\}\bigr|
	\lesssim_{\Omega}
	\frac{\|f\|_{L^{1}(\mathbb{R}^{d})}}{\lambda},
	\qquad \lambda>0,
\end{equation}
provided $\Omega\in L\log L(\mathbb{S}^{d-1})$.

The object considered here combines the maximal and commutator phenomena.
For $b\in\BMO$ and $\varepsilon>0$, set
\begin{equation*}
	T_{\Omega,b,\varepsilon}f(x)
	\coloneqq
	b(x)T_{\Omega,\varepsilon}f(x)
	-T_{\Omega,\varepsilon}(bf)(x)
	=
	\int_{|x-y|>\varepsilon}
	\bigl(b(x)-b(y)\bigr)
	\frac{\Omega(x-y)}{|x-y|^{d}}f(y)\,\dd y,
\end{equation*}
and define the maximal commutator
\begin{equation}\label{eq:rough-maximal-commutator}
	\Tbmax f(x)
	\coloneqq
	\sup_{\varepsilon>0}
	|T_{\Omega,b,\varepsilon}f(x)|.
\end{equation}
The definition \eqref{eq:rough-maximal-commutator} is essential:
$\Tbmax$ is the supremum of the truncated commutators, not the formal
commutator $[b,\Tmax]$ with the sublinear operator $\Tmax$.

The endpoint estimates \eqref{eq:HuTao-endpoint} and
\eqref{eq:Lai-maximal-weak11} suggest a precise question. 

\noindent {\bf Question.} Does the
$L\log L$ endpoint bound for the rough commutator persist uniformly over
all truncation parameters, without strengthening the
$L(\log L)^2$ hypothesis on $\Omega$?

\subsection{Main results}

In this paper, we  gives an affirmative
answer to the above question. Set
\begin{equation*}
	\Phi(t) \coloneqq t\bigl(\log(e+t)\bigr),
	\quad t > 0.
\end{equation*} Our main result is stated as follows.

\begin{theorem}\label{thm:main}
	Let $d\geq 2$. Suppose that $\Omega$ is homogeneous of degree zero,
	has mean zero on $\mathbb{S}^{d-1}$, and belongs to
	$L(\log L)^2(\mathbb{S}^{d-1})$. If
	$b\in\BMO(\mathbb{R}^{d})$, then, for every $\lambda>0$,
	\begin{equation}\label{eq:main-endpoint}
		\bigl|\{x\in\mathbb{R}^{d}:\Tbmax f(x)>\lambda\}\bigr|
		\lesssim_{\Omega,b}
		\int_{\mathbb{R}^{d}}
		\Phi\left(\frac{|f(x)|}{\lambda}\right)\,\dd x.
	\end{equation}
\end{theorem}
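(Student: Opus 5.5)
By homogeneity (replace $f$ by $f/\lambda$) it suffices to take $\lambda=1$, and by the standard reduction we may assume $\|b\|_{\BMO}=1$. We also take $f$ bounded with compact support. The plan combines three ingredients: Seeger/Lai's treatment of the maximal truncation, Pérez's Orlicz argument for commutators, and the Hu--Tao handling of the $L(\log L)^2$ angular class.

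\emph{Calder\'on--Zygmund decomposition.} Perform the decomposition of $f$ at height $1$ with respect to the Orlicz average $\Phi$. This gives $f=g+\sum_Q h_Q$ with $\|g\|_\infty\lesssim 1$, with $\sum_Q|Q|\lesssim\int\Phi(|f|)$, and with $h_Q$ supported in $Q$, of mean zero, and satisfying $\|h_Q\|_{L\log L,Q}\lesssim 1$.

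\emph{The good part.} By Hu's $L^2$ boundedness of $\Tbmax$ for $\Omega\in L(\log L)^2$, the good part contributes at most $\|g\|_2^2\lesssim\|f\|_1$.

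\emph{The bad part and the Pérez splitting.} Remove the exceptional set $\bigcup_Q 100Q$. For $x$ outside it, write
\[
T_{\Omega,b,\eps}h_Q(x)=(b(x)-b_Q)\,T_{\Omega,\eps}h_Q(x)-T_{\Omega,\eps}\bigl((b-b_Q)h_Q\bigr)(x).
\]

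For the first term, bound $\sup_\eps|\sum_Q (b-b_Q)T_{\Omega,\eps}h_Q|$ by a Lai-type weak $L^1$ argument. The weight $|b-b_Q|$ is absorbed through the John--Nirenberg inequality at each dyadic scale $2^jQ$, at a cost of a factor $j$. That extra factor is exactly what the additional $\log L$ in $\Omega$ pays for.

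For the second term, the functions $(b-b_Q)h_Q$ are controlled in $L^1$ by $\|b-b_Q\|_{\exp L,Q}\,\|h_Q\|_{L\log L,Q}\,|Q|$. This allows us to apply the endpoint estimate \eqref{eq:Lai-maximal-weak11} to $\sum_Q (b-b_Q)h_Q$. This piece has no cancellation, so the Lai bound must be applied to a function without mean zero. This is fine since \eqref{eq:Lai-maximal-weak11} holds for all of $L^1$, and it yields $\sum_Q\int_Q\Phi(|f|)$.

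\emph{Linearization and decomposition of $\Omega$.} Linearize the first term by measurable choices $\eps(x)$ and dyadic scales $2^{k}\le\eps(x)<2^{k+1}$. The supremum then splits into a full dyadic sum $\sum_{k>k(x)}$ plus one rough truncated annulus. Decompose $\Omega=\sum_m\Omega_m$ with $|\Omega_m|\sim 2^m$ on disjoint sets. The pieces with large $2^m$ relative to the scale gap $s$ are handled trivially in $L^1$, contributing $\sum_m m^2 2^m|\supp\Omega_m|\lesssim\|\Omega\|_{L(\log L)^2}$ (one log from the counting of scales, one from $b$).

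For the remaining pieces, regularize the cutoffs $\mathbf 1_{\{|x-y|>\eps(x)\}}$ by smooth ones, with error terms handled by the spherical maximal function. Then decompose microlocally into angular sectors of aperture $2^{-s\gamma}$ as in Seeger.

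\emph{Main obstacle.} The hard part is proving, for each scale gap $s$, the decay estimate
\[
\bigl\|\sum_Q\sum_{k} (b-b_Q)\,\mathcal T_{k,s}h_Q\bigr\|_{2}^2\lesssim 2^{-\delta s}\,\|\Omega\|_{L(\log L)^2}\sum_Q|Q|,
\]
uniformly over the linearization $k(x)$. This is where the commutator structure and the maximal truncation interact.

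I would obtain it by interpolation. First, prove a localized $L^1$ bound with only a polynomial loss in $s$. Second, prove an $L^3$ (or $L^2$) bound with exponential gain $2^{-\delta s}$, coming from the Fourier decay of the smoothed sectorial kernels. Here the factor $b-b_Q$ is handled by embedding into the analytic family $e^{z(b-b_Q)}$, using the Cauchy integral formula with $|z|$ small so that the weights $e^{\mathrm{Re}\,z\,b}$ are uniformly $A_p$ by John--Nirenberg. Interpolating the two bounds gives $2^{-\delta' s}$ decay, and summing over $s$ and $m$ completes the proof.
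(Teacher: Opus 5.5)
Your outer layers match the paper: the Calder\'on--Zygmund decomposition, the splitting $b(x)-b(y)=(b(x)-b_Q)-(b(y)-b_Q)$ with Lai's theorem applied to $\sum_Q(b-b_Q)h_Q$, and the $L^1$ treatment of the large part of $\Omega$ via $\sum_m m^2\|\Omega_m\|_1$. The gap is in the maximal truncation. Invoking a ``Lai-type argument'' does not fill it, because what you actually describe is an arbitrary measurable linearization $k(x)$, followed by an $L^2$ decay estimate ``uniformly over the linearization'' obtained by smoothing the radial cutoff $\mathbf 1_{\{|x-y|>\eps(x)\}}$ and running Seeger's microlocal decomposition.

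After such a linearization, each piece $T_jh_Q$ is multiplied by the rough, $x$-dependent indicator $\chi_{\{k(x)<j\}}$. Smoothing in the radial variable $|x-y|$ does nothing to this factor. Seeger's mechanism needs $(I-G_\nu)[\,(T_{j,\nu}h_Q)\cdot(\text{cutoff})\,]$ to be negligible. That is proved by integrating by parts in $r$ along $\theta\in E_\nu$, with the cutoff evaluated at $y+r\theta$, so the cutoff must satisfy derivative bounds $\lesssim 2^{-(j-s\kappa)|\alpha|}$. A measurable indicator can push the output to any frequency, so the ``Fourier decay of the smoothed sectorial kernels'' is unavailable. This is exactly the difficulty Lai's work overcomes for $T^*_\Omega$, and your plan has no device for it.

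The paper, following Lai, proceeds as follows. It localizes $T_j$ to cubes $J$ of shifted grids. It stratifies the relevant cubes $\mathcal H_s$ into layers $\mathcal I^{\#,n}_{s,\iota}$ via the sets $F^n_{s,\iota}$, with $|F^n_{s,\iota}|\lesssim 2^{-2n}\|f\|_1$, so that within a layer every chain of nested cubes $J^\iota$ has length at most $u_0=C_02^{ds}$. Rademacher--Menshov then costs only $\log u_0\approx s$ and reduces the maximal operator to signed sums $\sum_J\eps_J(b-b_Q)(T_jh_Q)\chi_{J^\iota}$. Because the spatial cutoffs are now indicators of cubes of side $2^j$, they can be mollified to $P_{j-s\kappa}\chi_{J^\iota}$. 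The resulting boundary layer has relative measure $2^{-s\kappa}$, which is cheap in $L^1$, and only then does the frequency decomposition make sense.

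Even granting a linearization, three further points need repair.

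\begin{enumerate}[(i)]
\item Your interpolation is inverted. The $L^3$ bounds available here are crude size bounds with no gain in $s$: polynomial in $s$, and even $2^{Cs\gamma}$ growth for the remainder. In the boundary, low-frequency and remainder terms the gain comes from $L^1$, through cancellation of $h_Q$, thin boundary layers, and nonstationary phase for $k\le j-s\varepsilon$. One then uses $\|\cdot\|_2\le\|\cdot\|_1^{1/4}\|\cdot\|_3^{3/4}$. An $L^2$ bound with exponential gain is the goal, not an input.
\item On the high-frequency part, $(b-b_Q)u$ must be split as $G_\nu[(b-b_Q)u]+[b,G_\nu]u+(b-b_Q)(I-G_\nu)u$. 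The first piece is handled by $TT^*$ together with the Hu--Tao Orlicz interpolation lemma. The middle piece needs $\sum_\nu\|[b,G_\nu]u\|_2^2\lesssim 2^{s\gamma(d-3/2)}\|u\|_2^2$. A bare analytic family with weighted Mikhlin for each $G_\nu$ gives only $2^{s\gamma(3d-1)}$, which swamps the $2^{-s\gamma(d-1)}$ gain in $\sum_\nu\|u_\nu\|_2^2$. The paper recovers the better exponent by Stein--Weiss interpolation against the unweighted Plancherel bound.
\item The truncated-annulus error is controlled by the rough maximal commutator $M_{\Omega,b}$, not by the spherical maximal function. The latter neither sees a rough $\Omega$ nor is bounded near $L^1$.
\end{enumerate}
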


The angular assumption in Theorem \ref{thm:main} is the same as in the
nonmaximal endpoint theorem \eqref{eq:HuTao-endpoint}. In particular, taking
the supremum over truncations entails no further logarithmic loss in the
integrability of $\Omega$. The theorem lies at the intersection of the
endpoint theories for rough commutators and rough maximal truncations, but
its proof requires more than a direct combination of the two estimates.
\vspace{0.3cm}

\noindent {\bf Sketch and novelty of the proof.} To demonstrate our novelty, 
we now explain a little bit of the main ideas in the proof of Theorem \ref{thm:main}.

We begin by replacing arbitrary truncations with dyadic ones and linearizing
the resulting maximal operator. A Calder\'on-Zygmund decomposition of $f$
then separates the good and bad contributions. The good part is controlled
by $L^2$ estimates. For a bad function supported on a cube $Q$, we use
\[
	b(x)-b(y)
	=
	\bigl(b(x)-b_Q\bigr)-\bigl(b(y)-b_Q\bigr).
\]
The term containing $b(y)-b_Q$ is reduced to Lai's estimate
\eqref{eq:Lai-maximal-weak11}, applied to a modified bad function. The term
with $b(x)-b_Q$ outside the singular integral is the principal new
difficulty.

Let $s$ measure the separation between the scale of a bad cube and the scale
of the kernel. At each $s$, we split
\[
	\Omega
	=
	\Omega\chi_{\{|\Omega|>2^{\eta s}\}}
	+
	\Omega\chi_{\{|\Omega|\leq 2^{\eta s}\}}.
\]
The large part is handled in $L^1$ by combining Orlicz estimates with the
John-Nirenberg inequality. The summability in $s$ is exactly where the
$L(\log L)^2$ hypothesis enters. For the bounded part of the kernel, a
Rademacher-Menshov linearization and Lai's dyadic organization reduce the
argument to an $L^2$ estimate with decay in $s$.

To obtain this decay, we regularize the spatial cutoffs and separate the
boundary, low-frequency, and high-frequency contributions. The first two are
controlled by localized $L^1$ and $L^3$ estimates and interpolation. For the
high-frequency term, a microlocal projection $G_\nu$ yields the identity
\[
	(b-b_Q)u
	=
	G_\nu\bigl[(b-b_Q)u\bigr]
	+[b,G_\nu]u
	+(b-b_Q)(I-G_\nu)u,
\]
where $u$ denotes a localized high-frequency piece. This decomposition is
the main point at which the commutator argument departs from the proof for
$\Tmax$. The principal microlocal term is treated by a $TT^*$ argument, the
middle term by analytic conjugation and weighted interpolation, and the
remainder by localized interpolation. Their combined decay is summable in
$s$ and closes the endpoint estimate.
\vspace{0.3cm}

{The organization of the paper is as follows:}
Section \ref{sec:preliminaries} records the Orlicz and BMO estimates,
exponential-weight bounds, and interpolation lemmas used throughout.
Section \ref{sec:reduction-linearization} reduces Theorem \ref{thm:main} to a
signed $L^2$ estimate by dyadic localization, a Calder\'on-Zygmund
decomposition, and linearization. In Section
\ref{sec:linearized-operator-estimate}, we regularize the spatial cutoffs,
introduce the microlocal decomposition, and prove the boundary and
low-frequency estimates. Sections
\ref{sec:proof-lemma45}-\ref{sec:proof-lemma47} establish the principal
microlocal estimate, the commutator estimate, and the remainder estimate,
respectively.
\vspace{0.3cm}

\noindent {\bf Notation.} The surface
measure on $\Sd$ is denoted by $d\sigma$. The symbols $\N$ and $\Z$ denote
the natural numbers and the integers. For a measurable set $E\subset\Rd$,
$|E|$ is its Lebesgue measure and $\chi_E$ its indicator function. The
letter $C$ denotes a positive constant that may change from line to line and
depends only on fixed structural parameters. Dependence on additional
parameters is indicated by subscripts. We write $A\lesssim B$ when
$A\leq CB$, and $A\approx B$ when both $A\lesssim B$ and $B\lesssim A$.

For $1\leq p\leq\infty$, write
$\|f\|_p=\|f\|_{L^p(\Rd)}$. If $1<p<\infty$, then
$p'=p/(p-1)$. The space $L^{1,\infty}(\Rd)$ denotes weak $L^1$. For a cube
$Q\subset\Rd$, let $\ell(Q)$ and $c_Q$ denote its side length and center.
If $a>0$, then $aQ$ is the cube concentric with $Q$ whose side length is
$a\ell(Q)$. We use
\[
	f_Q\coloneqq \frac{1}{|Q|}\int_Q f(x)\,\dd x,
	\qquad
	|f|_Q\coloneqq \frac{1}{|Q|}\int_Q|f(x)|\,\dd x.
\]
The symbols $\supp f$, $\dist(E,F)$, and $\diam(E)$ have their usual
meanings. Our Fourier-transform conventions are
\[
	\widehat f(\xi)
	\coloneqq
	\int_{\Rd}e^{-i\inner{x}{\xi}}f(x)\,\dd x,
	\qquad
	\widecheck f(x)
	\coloneqq
	\frac{1}{(2\pi)^d}
	\int_{\Rd}e^{i\inner{x}{\xi}}f(\xi)\,\dd\xi.
\]
Finally, $\Phi(t)=t\log(e+t)$ throughout. The local Orlicz norm
$\|f\|_{L\log L,Q}$ and the generalized H\"older inequality are recalled in
Section \ref{sec:preliminaries}.

\section{Preliminaries}\label{sec:preliminaries}

We recall here the notation and auxiliary results used in the proof.  These
comprise basic facts about Orlicz spaces and BMO, an exponential-weight
consequence of the John-Nirenberg theorem, and two interpolation principles.

\subsection{Orlicz spaces and BMO space}\label{subsec:young-orlicz}

The material in this subsection is standard; see, for example,
\cite{Grafakos2014,RaoRen1991}.
\begin{definition}\label{def:young-function}
	A continuous function $\Psi\colon[0,\infty)\to[0,\infty)$ is called a
	Young function if it is convex and increasing, $\Psi(0)=0$, and
	$\Psi(t)\to\infty$ as $t\to\infty$.
\end{definition}

\begin{definition}\label{def:orlicz-norm}
	Let $Q\subset\Rd$ be a cube.  For a measurable function $f$ on $Q$, its
	localized Luxemburg norm is defined by
	\begin{equation*}
		\norm{f}_{\Psi(L),Q}
		\coloneqq \inf\set{
			\lambda>0:
			\frac{1}{\abs{Q}}
			\int_Q\Psi\Big(\frac{\abs{f(x)}}{\lambda}\Big)\,\dd x
			\leq1
		}.
	\end{equation*}
	We denote the corresponding local Orlicz space by $\Psi(L)(Q)$.
\end{definition}

The complementary Young function is the Legendre transform
\begin{equation*}
	\PsiComp(t)
	\coloneqq \sup_{s\geq0}\set{st-\Psi(s)},
	\quad t\geq0.
\end{equation*}
Thus Young's inequality takes the form
\begin{equation*}
	st\leq\Psi(s)+\PsiComp(t),
	\quad s,t\geq0.
\end{equation*}
The generalized inverses of $\Psi$ and $\PsiComp$ satisfy
\begin{equation*}
	t
	\leq
	\Psi^{-1}(t)\PsiComp^{-1}(t)
	\leq2t,
	\quad t>0.
\end{equation*}

We shall repeatedly use the following Orlicz-H\"older inequality.
\begin{proposition}\label{prop:orlicz-holder}
	Let $\Psi$ be a Young function, with complementary function $\PsiComp$.
	For every cube $Q\subset\Rd$ and all measurable functions $f,g$ on $Q$,
	\begin{equation*}
		\frac{1}{\abs{Q}}
		\int_Q\abs{f(x)g(x)}\,\dd x
		\leq
		2\norm{f}_{\Psi(L),Q}\norm{g}_{\PsiComp(L),Q}.
	\end{equation*}
	Moreover,
	\begin{equation*}
		\norm{f}_{\Psi(L),Q}
		\approx
		\sup_{\norm{g}_{\PsiComp(L),Q}\leq1}
		\frac{1}{\abs{Q}}
		\int_Q\abs{f(x)g(x)}\,\dd x,
	\end{equation*}
	with absolute comparison constants.
\end{proposition}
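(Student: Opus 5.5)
We prove the two parts of Proposition~\ref{prop:orlicz-holder} separately. The main tool is Young's inequality $st\le\Psi(s)+\PsiComp(t)$, applied after normalizing $f$ and $g$ by their Luxemburg norms.

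\emph{Hölder inequality.} Assume $0<\norm{f}_{\Psi(L),Q}<\infty$ and $0<\norm{g}_{\PsiComp(L),Q}<\infty$; the degenerate cases are trivial, since a vanishing Luxemburg norm forces the function to vanish a.e. on $Q$. Set $a=\norm{f}_{\Psi(L),Q}$ and $c=\norm{g}_{\PsiComp(L),Q}$. By continuity of $\Psi$ and monotone convergence, the infimum in Definition~\ref{def:orlicz-norm} is attained, so
\[
\frac1{|Q|}\int_Q\Psi\bigl(|f|/a\bigr)\le1,
\qquad
\frac1{|Q|}\int_Q\PsiComp\bigl(|g|/c\bigr)\le1 .
\]
Apply Young's inequality pointwise with $s=|f(x)|/a$ and $t=|g(x)|/c$, then average over $Q$. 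This gives $\frac1{|Q|}\int_Q|fg|\le 2ac$, which is the claim.

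\emph{Norm equivalence, upper bound.} The Hölder inequality just proved shows that the supremum is at most $2\norm{f}_{\Psi(L),Q}$.

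\emph{Norm equivalence, lower bound.} This is the only step needing care. The direct route is a duality argument: choose $g$ so that equality holds in Young's inequality. Normalize so that $\norm{f}_{\Psi(L),Q}=1$. First reduce to $f$ bounded with $\frac1{|Q|}\int_Q\Psi(|f|)=1$. This uses truncations $f_N=\min(|f|,N)$, whose norms increase to $1$ by monotone convergence; the sup side is monotone in $|f|$ as well.

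Take $g=\psi(|f|)$, where $\psi$ is the right derivative of $\Psi$. Equality in Young's inequality gives
\[
|f|\,\psi(|f|)=\Psi(|f|)+\PsiComp\bigl(\psi(|f|)\bigr).
\]
Convexity gives $\PsiComp(\psi(s))\le s\psi(s)\le\Psi(2s)$. This last bound is inconvenient because it needs a $\Delta_2$ condition.

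To avoid $\Delta_2$, we do not test with $\psi(|f|)$ directly. Instead we pass through the Orlicz (Amemiya) norm
\[
\norm{f}^{O}=\sup\Bigl\{\frac1{|Q|}\int_Q|fg| : \frac1{|Q|}\int_Q\PsiComp(|g|)\le1\Bigr\}.
\]
The classical inequalities $\norm{f}_{\Psi(L),Q}\le\norm{f}^{O}\le2\norm{f}_{\Psi(L),Q}$ are proved in \cite{RaoRen1991}. The constraint $\frac1{|Q|}\int_Q\PsiComp(|g|)\le1$ is exactly the condition $\norm{g}_{\PsiComp(L),Q}\le1$. Hence $\norm{f}^{O}$ coincides with the supremum in the proposition, and the two classical inequalities give the equivalence with constants $1$ and $2$.

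For the lower bound $\norm{f}^{O}\ge\norm{f}_{\Psi(L),Q}$ we plan to test with
\[
g=\psi(|f|/\lambda)\Big/\Bigl(1+\tfrac1{|Q|}\int_Q\PsiComp\bigl(\psi(|f|/\lambda)\bigr)\Bigr)
\]
for $\lambda<\norm{f}_{\Psi(L),Q}$. Here $\frac1{|Q|}\int_Q\Psi(|f|/\lambda)>1$, and the standard computation gives $\frac1{|Q|}\int_Q|f|g\ge\lambda$. Letting $\lambda\uparrow\norm{f}_{\Psi(L),Q}$ gives the bound. Getting this normalization right, so that $\frac1{|Q|}\int_Q\PsiComp(|g|)\le1$ without any growth assumption on $\Psi$, is the one delicate point. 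As the paper indicates, it is standard and can be cited from \cite{Grafakos2014,RaoRen1991}.
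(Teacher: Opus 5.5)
Your proof is correct and is the standard argument found in Grafakos and Rao--Ren, which the paper cites in place of a proof: Young's inequality after normalizing by the Luxemburg norms gives the H\"older bound, and the test function $\psi(|f|/\lambda)\big/\bigl(1+\frac{1}{|Q|}\int_Q\PsiComp(\psi(|f|/\lambda))\bigr)$, with $\psi$ the right derivative of $\Psi$, gives the reverse inequality with constants $1$ and $2$. Two points to tighten. First, attainment of the infimum for $\PsiComp$ follows from its left-continuity, not continuity, since $\PsiComp$ may jump to $+\infty$ (e.g.\ for $\Psi(t)=t$); alternatively, avoid attainment by taking admissible $a'>a$, $c'>c$ and letting them decrease. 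Second, the truncation $\min(|f|,N)$ you introduced for the abandoned direct route is also needed in the final test-function step, to ensure $\frac{1}{|Q|}\int_Q\Psi(|f|/\lambda)$ and $\frac{1}{|Q|}\int_Q\PsiComp(\psi(|f|/\lambda))$ are finite.
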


The endpoint estimate is naturally expressed in terms of the Young function
\begin{equation*}
	\Phi(t)=t\log(e+t).
\end{equation*}
A direct calculation shows that its complementary function satisfies
\begin{equation*}
	\overline{\Phi}(t)\approx e^t-1
	\qquad (t\ \text{sufficiently large}).
\end{equation*}
Consequently, the associated localized Luxemburg norms obey
\begin{equation*}
	\frac{1}{\abs{Q}}
	\int_Q\abs{f(x)g(x)}\,\dd x
	\lesssim
	\norm{f}_{L\log L,Q}\norm{g}_{\exp L,Q}.
\end{equation*}

For $1<p<\infty$ and $\alpha\in\R$, write
\begin{equation*}
	\Phi_{p,\alpha}(t)
	\coloneqq t^p\log^\alpha(e+t).
\end{equation*}
We write $L^p(\log L)^\alpha(Q)$ for the corresponding local Orlicz space.
When $\alpha<0$, $\Phi_{p,\alpha}$ may be modified near the origin to give
an equivalent Young function; this does not alter the associated local norm
up to equivalence.  If $\alpha\geq0$, then
\begin{equation*}
	\overline{\Phi_{p,\alpha}}(t)
	\approx 
	t^{p'}\log^{-\alpha/(p-1)}(e+t)
	\qquad (t\ \text{sufficiently large}).
\end{equation*}
The next lemma, due to Hu and Tao \cite[Lemma 2.1]{HuTao2023}, converts a
small $L^1$ bound and a uniform Orlicz bound into an intermediate $L^q$
estimate.

\begin{lemma}[\cite{HuTao2023}]\label{lem:orlicz-local-interpolation}
	Let $Q\subset\Rd$ be a cube, let $1<p<\infty$ and $\alpha\geq0$, and
	let $0<C_1\leq1$.  Suppose that $f$ is measurable on $Q$ and
	\begin{equation*}
		\frac{1}{\abs{Q}}
		\int_Q\abs{f(y)}\,\dd y
		\leq C_1,
		\quad
		\norm{f}_{L^p(\log L)^{-\alpha},Q}\leq1.
	\end{equation*}
	Fix $1<q<p$, and let $r\in(0,1)$ be determined by
	\begin{equation*}
		\frac{1}{q}=r+\frac{1-r}{p}.
	\end{equation*}
	Then, for every $0<\eps<r$,
	\begin{equation*}
		\paren{
			\frac{1}{\abs{Q}}
			\int_Q\abs{f(y)}^q\,\dd y
		}^{1/q}
		\lesssim_{p,q,\alpha,\eps}
		C_1^{\eps}.
	\end{equation*}
\end{lemma}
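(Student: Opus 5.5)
We normalize by $|Q|$ and may take $|Q|=1$. The plan is a layer-cake decomposition of $f$ at a height $M\geq 1$, chosen at the end as a power of $C_1^{-1}$. Write $f=f\chi_{\{|f|\leq M\}}+f\chi_{\{|f|>M\}}=:f_1+f_2$. The quasi-triangle inequality in $L^q$ reduces the claim to bounding $\|f_1\|_q$ and $\|f_2\|_q$ separately.

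For the bounded piece, use the elementary interpolation $|f_1|^q\leq M^{q-1}|f_1|$. Together with the hypothesis $\int_Q|f|\leq C_1$ this gives
\[
\|f_1\|_{L^q(Q)}\leq M^{1-1/q}C_1^{1/q}.
\]

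For the large piece, use the Orlicz hypothesis. The condition $\|f\|_{L^p(\log L)^{-\alpha},Q}\leq1$ means
\[
\int_Q|f|^p\log^{-\alpha}(e+|f|)\leq 1,
\]
after the harmless modification of $\Phi_{p,-\alpha}$ near the origin, which does not affect large values. On $\{|f|>M\}$ we have, for any small $\delta>0$,
\[
|f|^q=|f|^p\log^{-\alpha}(e+|f|)\cdot|f|^{q-p}\log^{\alpha}(e+|f|)\lesssim_{\delta,\alpha} |f|^p\log^{-\alpha}(e+|f|)\, M^{q-p+\delta},
\]
since $t^{q-p}\log^\alpha(e+t)\lesssim t^{q-p+\delta}$ and $q-p+\delta<0$, so this factor is decreasing for $t>M$. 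Hence
\[
\|f_2\|_q\lesssim M^{(q-p+\delta)/q}.
\]

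Now optimize in $M$ by choosing $M=C_1^{-\theta}$. The total bound is
\[
C_1^{1/q-\theta(1-1/q)}+C_1^{\theta(p-q-\delta)/q}.
\]
Equating the exponents without $\delta$ gives $1-\theta(q-1)=\theta(p-q)$, so $\theta=1/(p-1)$ and the common exponent is
\[
\frac{p-q}{q(p-1)}=\frac{1/q-1/p}{1-1/p}.
\]
From $\frac1q=r+\frac{1-r}{p}$ we get $r=(1/q-1/p)/(1-1/p)$, so this exponent is exactly $r$. The $\delta$-loss lowers the second exponent to $r-\delta\theta/q$. Given $\eps<r$, choose $\delta$ so that both exponents are at least $\eps$. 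Since $C_1\leq1$, the bound is $\lesssim C_1^{\eps}$, with constants depending on $p,q,\alpha,\eps$.

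No step is a real obstacle. The only delicate point is the logarithmic factor, which prevents attaining the endpoint exponent $r$ and explains the restriction $\eps<r$. One must also check that the modification of the Young function near the origin is irrelevant, which holds because only the region $|f|>M\geq1$ uses the Orlicz bound. The bound $\int \Phi(|f|)\leq1$ follows from the definition of the Luxemburg norm being at most $1$, by monotone convergence in $\lambda\downarrow1$.
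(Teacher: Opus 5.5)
The paper does not prove this lemma; it imports it from Hu--Tao, so there is no in-paper argument to compare yours with. Your truncation argument is correct and self-contained. With $M=C_1^{-1/(p-1)}\geq1$, the bounded part gives $M^{1-1/q}C_1^{1/q}=C_1^{r}$. The tail gives $\lesssim M^{-(p-q-\delta)/q}=C_1^{r-\delta/(q(p-1))}$. Your identification $r=\frac{p-q}{q(p-1)}$ is right. Since $q>1$, ordinary Minkowski suffices; no quasi-triangle inequality is needed.

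One point deserves more care than you give it. The modified Young function is only guaranteed to be comparable to $t^p\log^{-\alpha}(e+t)$ for $t\geq t_0(p,\alpha)$, and $t_0$ can exceed $1$. The derivative of $t^p\log^{-\alpha}(e+t)$ has the sign of $p\log(e+t)-\alpha t/(e+t)$, which is negative at $t=5$ when, say, $p=1.1$ and $\alpha=5$. So $M\geq1$ alone does not put $\{|f|>M\}$ in the range where the Orlicz bound applies.

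The fix is immediate. The set $\{M<|f|\leq t_0\}$ contributes at most $t_0^{q-1}C_1$ to $\frac{1}{|Q|}\int_Q|f|^q$. Hence it contributes $\lesssim_{p,q,\alpha}C_1^{1/q}\leq C_1^{\varepsilon}$ to the $L^q$ average, because $1/q>r>\varepsilon$ and $C_1\leq1$. On $\{|f|>\max(M,t_0)\}$ your estimate applies verbatim.

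An equivalent, slightly shorter route uses the fact that for every $q<p_1<p$ one has $\bigl(\frac{1}{|Q|}\int_Q|f|^{p_1}\bigr)^{1/p_1}\lesssim_{p,p_1,\alpha}1$. Then log-convexity of normalized $L^p$ averages gives $\bigl(\frac{1}{|Q|}\int_Q|f|^{q}\bigr)^{1/q}\lesssim C_1^{r_1}$ with $1/q=r_1+(1-r_1)/p_1$. Letting $p_1\uparrow p$ gives $r_1\uparrow r$. The loss from taking $p_1<p$ plays exactly the role of your $\delta$.
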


We next introduce BMO space.
\begin{definition}\label{def:BMO}
	Let $b\in L_{\mathrm{loc}}^1(\Rd)$.  For a cube $Q\subset\Rd$, set
	\begin{equation*}
		b_Q\coloneqq\frac{1}{\abs{Q}}\int_Q b(x)\,\dd x.
	\end{equation*}
	The BMO seminorm of $b$ is
	\begin{equation*}
		\norm{b}_{\BMO}
		\coloneqq \sup_Q
		\frac{1}{\abs{Q}}
		\int_Q\abs{b(x)-b_Q}\,\dd x,
	\end{equation*}
	where the supremum is taken over all cubes $Q\subset\Rd$.  We write
	$b\in\BMO(\Rd)$ when $\norm{b}_{\BMO}<\infty$.
\end{definition}

We next recall the John-Nirenberg theorem, one of the most commonly used tools in the study of BMO spaces.

\begin{theorem}[John-Nirenberg theorem \cite{JohnNirenberg1961}]\label{thm:john-nirenberg}
	There exist constants $c_d,C_d>0$ such that, for every
	$b\in\BMO(\Rd)$, every cube $Q\subset\Rd$, and every $t>0$,
	\begin{equation*}
		\abs{\set{x\in Q:\abs{b(x)-b_Q}>t}}
		\leq
		C_d \exp\paren{-\frac{c_dt}{\norm{b}_{\BMO}}}
		\abs{Q}.
	\end{equation*}
\end{theorem}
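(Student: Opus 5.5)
We prove the inequality for $b$ with $\norm{b}_{\BMO}=1$ and then rescale. Both sides are invariant under $b\mapsto b/\norm{b}_{\BMO}$, $t\mapsto t/\norm{b}_{\BMO}$; if $\norm{b}_{\BMO}=0$ then $b$ is a.e.\ constant and there is nothing to prove. The approach is the classical Calder\'on--Zygmund stopping-time argument, iterated to turn a halving of measure into exponential decay. Define
\begin{equation*}
	F(t)\coloneqq\sup_{Q,\;\norm{b}_{\BMO}\le 1}\frac{\abs{\set{x\in Q:\abs{b(x)-b_Q}>t}}}{\abs{Q}},\qquad t>0.
\end{equation*}
This quantity satisfies $F(t)\le 1$ trivially. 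The uniform finiteness of $F$ is what allows a recursive inequality for it to be iterated.

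\textbf{Step 1: Calder\'on--Zygmund decomposition.} Fix a cube $Q$ and $b$ with $\norm{b}_{\BMO}\le1$. Since $\frac{1}{\abs{Q}}\int_Q\abs{b-b_Q}\le 1<2$, we run the dyadic Calder\'on--Zygmund decomposition of $\abs{b-b_Q}$ on $Q$ at height $2$. This gives pairwise disjoint maximal dyadic subcubes $Q_j\subset Q$ with the following properties:
\begin{itemize}
	\item $2<\frac{1}{\abs{Q_j}}\int_{Q_j}\abs{b-b_Q}\le 2^{d+1}$;
	\item $\sum_j\abs{Q_j}\le\frac12\int_Q\abs{b-b_Q}\le\frac12\abs{Q}$;
	\item $\abs{b-b_Q}\le 2$ a.e.\ on $Q\setminus\bigcup_jQ_j$, by Lebesgue differentiation.
\end{itemize}
The first property also yields $\abs{b_{Q_j}-b_Q}\le\frac{1}{\abs{Q_j}}\int_{Q_j}\abs{b-b_Q}\le 2^{d+1}$.

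\textbf{Step 2: Recursive inequality.} Take $t>2^{d+1}$; note $2^{d+1}\ge 2$. Up to a null set, every point with $\abs{b-b_Q}>t$ lies in some $Q_j$. On such a point, $\abs{b-b_{Q_j}}\ge\abs{b-b_Q}-\abs{b_{Q_j}-b_Q}>t-2^{d+1}$. Applying the definition of $F$ on each $Q_j$, using that $\norm{b}_{\BMO}\le1$ there as well, gives
\begin{equation*}
	\abs{\set{x\in Q:\abs{b-b_Q}>t}}\le\sum_j F(t-2^{d+1})\abs{Q_j}\le\tfrac12F(t-2^{d+1})\abs{Q}.
\end{equation*}
Taking the supremum over $Q$ and $b$ yields $F(t)\le\frac12F(t-2^{d+1})$ for $t>2^{d+1}$.

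\textbf{Step 3: Iteration.} Iterating $k$ times and using $F\le1$ gives $F(t)\le 2^{-k}$ whenever $t>k2^{d+1}$. For general $t>0$, choose $k=\lfloor t/2^{d+1}\rfloor$, with the convention that the bound $F\le 1$ covers $k=0$. This gives
\begin{equation*}
	F(t)\le 2\cdot 2^{-t/2^{d+1}}=2e^{-c_dt},\qquad c_d=\frac{\log 2}{2^{d+1}}.
\end{equation*}
Hence the theorem holds with $C_d=2$. The only delicate point is the bookkeeping in Step 2: the oscillation must be re-centred at $b_{Q_j}$ at a cost of $2^{d+1}$, and the recursion needs the a priori bound $F\le1$ to start. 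Both are handled above, so no real obstacle remains.
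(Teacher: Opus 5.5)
The paper gives no proof of this statement; it quotes it from \cite{JohnNirenberg1961}. Your argument is the classical Calder\'on--Zygmund stopping-time proof, and it is correct. The key ingredients are all in place: the selected cubes carry at most half of $|Q|$, re-centring costs $|b_{Q_j}-b_Q|\le 2^{d+1}$, and the a priori bound $F\le 1$ lets the recursion $F(t)\le\frac12F(t-2^{d+1})$ be iterated.

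There is one small slip in Step 3. With $k=\lfloor t/2^{d+1}\rfloor$, the strict inequality $t>k2^{d+1}$ needed for $F(t)\le 2^{-k}$ fails when $t/2^{d+1}$ is an integer. Take $k=\lceil t/2^{d+1}\rceil-1$ instead. This $k$ satisfies both $t>k2^{d+1}$ and $k\ge t/2^{d+1}-1$, so you still get $F(t)\le 2\cdot 2^{-t/2^{d+1}}$ with the same constants.
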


An equivalent formulation is the local exponential integrability of BMO
oscillations.  We record a quantitative version convenient for later use.
\begin{corollary}\label{cor:bmo-intexp}
	There exist constants $c_d,C_d>0$ such that, for every
	$b\in\BMO(\Rd)$, every cube $Q\subset\Rd$, and every $s\geq0$ satisfying
	$s\norm{b}_{\BMO}\leq c_d$,
	\begin{equation*}
		\frac{1}{\abs{Q}}
		\int_Q\exp\paren{s\abs{b(x)-b_Q}}\,\dd x
		\leq 1+C_ds\norm{b}_{\BMO}.
	\end{equation*}
\end{corollary}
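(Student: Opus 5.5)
We deduce Corollary \ref{cor:bmo-intexp} from Theorem \ref{thm:john-nirenberg} through the layer-cake formula. By homogeneity we may assume $\Bnorm>0$, and we set $\beta\coloneqq \Bnorm$ for brevity. The first step is the identity
\begin{equation*}
	\frac{1}{\abs{Q}}\int_Q\paren{\exp\paren{s\abs{b(x)-b_Q}}-1}\dd x
	=
	\frac{s}{\abs{Q}}\int_0^\infty e^{st}\,
	\abs{\set{x\in Q:\abs{b(x)-b_Q}>t}}\dd t .
\end{equation*}
It follows from Fubini, since $e^{s\tau}-1=\int_0^\tau s e^{st}\dd t$.

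Next we insert the distributional bound of Theorem \ref{thm:john-nirenberg}, with its constants $c_d',C_d'$. The right-hand side is then at most
\begin{equation*}
	sC_d'\int_0^\infty \exp\paren{st-\frac{c_d't}{\beta}}\dd t
	=
	\frac{sC_d'}{c_d'/\beta-s}
	=
	\frac{C_d'\,s\beta}{c_d'-s\beta}.
\end{equation*}
This is valid whenever $s\beta<c_d'$, which makes the exponent negative.

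Finally we choose the constant in the corollary to be $c_d\coloneqq c_d'/2$. Then $s\beta\leq c_d$ gives $c_d'-s\beta\geq c_d'/2$. The integral above is therefore bounded by $(2C_d'/c_d')\,s\beta$. Adding back the constant $1$ yields the claim with $C_d\coloneqq 2C_d'/c_d'$.

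There is no real obstacle here. The only care needed is to shrink the admissible range of $s$ by a fixed factor, so that the bound is linear in $s\beta$ rather than degenerating as $s\beta\to c_d'$. The case $s=0$ is trivial.
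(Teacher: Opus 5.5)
Your proof is correct. The paper states this corollary without proof, as an immediate consequence of Theorem \ref{thm:john-nirenberg}, and your layer-cake computation is the intended argument; it is the same device the paper uses for Corollary \ref{cor:bmo-subset-integral}, and shrinking the admissible range of $s\norm{b}_{\BMO}$ by a fixed factor is exactly what keeps the bound linear. One trivial quibble: the case $\norm{b}_{\BMO}=0$ is not handled ``by homogeneity''. It is trivial because $b$ is then a.e.\ constant, so the left-hand side equals $1$.
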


In terms of the localized Luxemburg norm, this gives the following familiar
estimate.
\begin{corollary}\label{cor:bmo-expL}
	For every $b\in\BMO(\Rd)$ and every cube $Q\subset\Rd$,
	\begin{equation*}
		\norm{b-b_Q}_{\exp L,Q}
		\lesssim
		\norm{b}_{\BMO}.
	\end{equation*}
\end{corollary}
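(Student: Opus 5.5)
The estimate of Corollary \ref{cor:bmo-expL} follows from the John--Nirenberg theorem (Theorem \ref{thm:john-nirenberg}) by a direct computation of the Luxemburg norm. It suffices to work with the Young function $\Psi(t)=e^t-1$, which is comparable for large $t$ to the complementary function $\overline{\Phi}$ defining $\exp L$. For small $t$ the two functions differ, but modifying a Young function on a bounded interval changes the local Luxemburg norm only by a constant factor. We may assume $\norm{b}_{\BMO}>0$, since otherwise $b$ is constant and there is nothing to prove. By homogeneity we may normalize $\norm{b}_{\BMO}=1$.

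Fix a cube $Q$ and a parameter $\lambda>0$. By the layer-cake formula and Theorem \ref{thm:john-nirenberg},
\begin{equation*}
	\frac{1}{\abs{Q}}\int_Q\paren{e^{\abs{b(x)-b_Q}/\lambda}-1}\dd x
	=\frac{1}{\lambda}\int_0^\infty e^{t/\lambda}\,
	\frac{\abs{\set{x\in Q:\abs{b(x)-b_Q}>t}}}{\abs{Q}}\,\dd t
	\leq\frac{C_d}{\lambda}\int_0^\infty e^{t/\lambda-c_dt}\,\dd t .
\end{equation*}
For $\lambda>1/c_d$ the right-hand side equals $\dfrac{C_d}{\lambda c_d-1}$. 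Choosing
\begin{equation*}
	\lambda=\lambda_d:=\frac{1+C_d}{c_d}
\end{equation*}
gives $\lambda c_d-1=C_d$, so the average is at most $1$. By the definition of the Luxemburg norm (Definition \ref{def:orlicz-norm}), this yields $\norm{b-b_Q}_{\exp L,Q}\leq\lambda_d$, a constant depending only on the dimension.

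Undoing the normalization, we replace $b$ by $b/\norm{b}_{\BMO}$ and use the homogeneity of the Luxemburg norm. This gives
\begin{equation*}
	\norm{b-b_Q}_{\exp L,Q}\lesssim\norm{b}_{\BMO},
\end{equation*}
which is the corollary.

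The only step needing care is the passage between $e^t-1$ and the actual complementary function $\overline{\Phi}$. Since $\overline{\Phi}(t)\leq C(e^t-1)$ for all $t\geq0$ (the two are comparable at infinity and both vanish at the origin), we have $\overline{\Phi}(t/C')\leq e^t-1$ for a suitable constant $C'$. Hence the norms differ by at most a constant factor, so there is no real obstacle here.
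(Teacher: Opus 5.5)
Your proof is correct and takes essentially the paper's route: the paper reads this corollary off the exponential-integrability bound of Corollary~\ref{cor:bmo-intexp}, which is itself the layer-cake consequence of John--Nirenberg, whereas you inline that same layer-cake computation with $s=1/\lambda$ and choose $\lambda$ so that the average of $e^{|b-b_Q|/\lambda}-1$ is at most $1$. Your final comparison with $\overline{\Phi}$ also holds, though ``both vanish at the origin'' is not by itself the reason; the precise reason is that $\overline{\Phi}\equiv 0$ on $[0,1]$ and $\overline{\Phi}(t)\le e^{t-1}$, so $\overline{\Phi}(t)\le e^t-1$ for all $t\ge 0$ and one can even take $C'=1$.
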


The same distributional estimate yields uniform control of all finite
moments.
\begin{corollary}\label{cor:bmo-Lp}
	For every $1\leq p<\infty$, every $b\in\BMO(\Rd)$, and every cube
	$Q\subset\Rd$,
	\begin{equation*}
		\paren{
			\frac{1}{\abs{Q}}
			\int_Q\abs{b(x)-b_Q}^p\,\dd x
		}^{1/p}
		\lesssim_{d,p} \norm{b}_{\BMO}.
	\end{equation*}
\end{corollary}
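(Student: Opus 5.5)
The statement to prove is Corollary~\ref{cor:bmo-Lp}. I would obtain it from the distributional bound of Theorem~\ref{thm:john-nirenberg} via the layer-cake formula. By homogeneity we may assume $\norm{b}_{\BMO}>0$; if $\norm{b}_{\BMO}=0$ then $b$ is a.e.\ constant on $Q$ and the left-hand side vanishes. For a fixed cube $Q$, write
\[
\frac{1}{\abs{Q}}\int_Q\abs{b(x)-b_Q}^p\,\dd x
=\frac{p}{\abs{Q}}\int_0^\infty t^{p-1}\abs{\set{x\in Q:\abs{b(x)-b_Q}>t}}\,\dd t .
\]

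Inserting the John--Nirenberg estimate, the integral is bounded by
\[
C_d\,p\int_0^\infty t^{p-1}\exp\paren{-\frac{c_dt}{\norm{b}_{\BMO}}}\dd t .
\]
The substitution $t=\norm{b}_{\BMO}u/c_d$ turns this into
\[
C_d\,p\,\Gamma(p)\,c_d^{-p}\norm{b}_{\BMO}^p .
\]
Taking $p$-th roots gives
\[
\paren{\frac{1}{\abs{Q}}\int_Q\abs{b-b_Q}^p}^{1/p}\le \paren{C_d\,\Gamma(p+1)}^{1/p}c_d^{-1}\norm{b}_{\BMO},
\]
with a constant depending only on $d$ and $p$, as claimed. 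For $p=1$ the statement is just the definition of $\norm{b}_{\BMO}$, so the argument is only needed for $p>1$, although it covers all $p\ge1$ uniformly.

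There is no real obstacle. The only points to check are that the layer-cake formula applies to the nonnegative measurable function $\abs{b-b_Q}\chi_Q$, and that the Gamma integral converges for every $p\ge1$, which holds because the weight decays exponentially. Alternatively, one could derive the bound from Corollary~\ref{cor:bmo-expL} together with the pointwise inequality $t^p\le C_p(e^{t}-1)$, but the direct layer-cake computation above is cleaner.
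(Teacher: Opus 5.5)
Your proof is correct and is essentially the argument the paper has in mind. The paper gives no separate proof; it only remarks that the distributional estimate of Theorem~\ref{thm:john-nirenberg} controls all finite moments. Your layer-cake computation with the Gamma integral, including the separate treatment of the case $\norm{b}_{\BMO}=0$, carries out exactly that.
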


We shall also need the following localized consequence.
\begin{corollary}\label{cor:bmo-subset-integral}
	Let $R\subset\Rd$ be a cube, and let $E\subset R$ be measurable with
	$0<\abs{E}\leq\abs{R}$.  Then, for every $b\in\BMO(\Rd)$,
	\begin{equation*}
		\int_E\abs{b(x)-b_R}\,\dd x
		\lesssim_d
		\norm{b}_{\BMO}\abs{E}
		\paren{1+\log\frac{\abs{R}}{\abs{E}}}.
	\end{equation*}
\end{corollary}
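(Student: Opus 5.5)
The plan is to integrate the John--Nirenberg distributional bound of Theorem~\ref{thm:john-nirenberg} over $E$ by the layer-cake formula, and to split the level $t$ at a threshold adapted to the ratio $|R|/|E|$. Write $B=\norm{b}_{\BMO}$; if $B=0$ then $b$ is a.e.\ constant on $R$ and there is nothing to prove, so assume $B>0$. For every $t_0\ge 0$,
\begin{equation*}
\int_E\abs{b(x)-b_R}\,\dd x
=\int_0^\infty\abs{\set{x\in E:\abs{b(x)-b_R}>t}}\,\dd t
\le t_0\abs{E}+\int_{t_0}^\infty \abs{\set{x\in R:\abs{b(x)-b_R}>t}}\,\dd t .
\end{equation*}
On $[0,t_0]$ the integrand is bounded trivially by $\abs{E}$. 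On $[t_0,\infty)$ we enlarge the set from $E$ to $R$, which is legitimate because $E\subset R$.

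The tail is then controlled by Theorem~\ref{thm:john-nirenberg}:
\begin{equation*}
\int_{t_0}^\infty C_d e^{-c_dt/B}\abs{R}\,\dd t=\frac{C_dB}{c_d}\,e^{-c_dt_0/B}\abs{R}.
\end{equation*}
Choose
\begin{equation*}
t_0=\frac{B}{c_d}\log\frac{\abs{R}}{\abs{E}}\ \ge 0 ,
\end{equation*}
which is admissible since $\abs{E}\le\abs{R}$. With this choice $e^{-c_dt_0/B}\abs{R}=\abs{E}$, so the tail is at most $(C_d/c_d)B\abs{E}$. Adding the two contributions gives
\begin{equation*}
\int_E\abs{b-b_R}\le \frac{B}{c_d}\abs{E}\log\frac{\abs{R}}{\abs{E}}+\frac{C_d}{c_d}B\abs{E}\lesssim_d B\abs{E}\Bigl(1+\log\frac{\abs{R}}{\abs{E}}\Bigr).
\end{equation*}

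There is no real obstacle here. The only point requiring care is the choice of $t_0$, which balances the trivial bound against the exponential tail. An alternative route would be the Orlicz--H\"older inequality (Proposition~\ref{prop:orlicz-holder}) with $f=\chi_E$ and $g=b-b_R$, combined with Corollary~\ref{cor:bmo-expL} and the computation $\norm{\chi_E}_{L\log L,R}\approx \frac{\abs{E}}{\abs{R}}\log(e+\abs{R}/\abs{E})$. This gives the same bound, but the direct layer-cake argument is cleaner.
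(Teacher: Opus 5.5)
Your proof is correct and follows the paper's argument: layer-cake over $E$, trivial bound $|E|$ below a threshold $t_0$, enlarge to $R$ and apply John--Nirenberg above it, with $t_0$ of order $\|b\|_{\BMO}\log(|R|/|E|)$. The only difference is cosmetic: the paper takes $t_0=\frac{\|b\|_{\BMO}}{c_d}\log\frac{C_d|R|}{|E|}$, whereas your choice without $C_d$ inside the logarithm is automatically nonnegative and yields the same bound.
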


\begin{proof}
	Set $N=\norm{b}_{\BMO}$.  The assertion is immediate if $N=0$.
	Otherwise, the layer-cake representation and Theorem
	\ref{thm:john-nirenberg} give, for every $t_0>0$,
	\begin{align*}
		\int_E\abs{b(x)-b_R}\,\dd x
		&=\int_0^\infty
		\abs{E\cap\set{x\in R:\abs{b(x)-b_R}>t}}\,\dd t\\
		&\leq \abs{E}t_0
		+C_d\abs{R}\int_{t_0}^\infty
		\exp\paren{-\frac{c_dt}{N}}\,\dd t\\
		&=\abs{E}t_0
		+\frac{C_dN}{c_d}\abs{R}
		\exp\paren{-\frac{c_dt_0}{N}}.
	\end{align*}
	Taking
	\begin{equation*}
		t_0=\frac{N}{c_d}
		\log\paren{\frac{C_d\abs{R}}{\abs{E}}}
	\end{equation*}
	proves the claim.
\end{proof}

Combining Corollary \ref{cor:bmo-expL} with Proposition
\ref{prop:orlicz-holder}, we obtain the BMO-Orlicz product estimate used
throughout the proof.

\begin{lemma}\label{lem:bmo-orlicz}
	For every $b\in\BMO(\Rd)$, every cube $Q\subset\Rd$, and every locally
	integrable function $h$,
	\begin{equation*}
		\frac{1}{\abs{Q}}
		\int_Q\abs{b(x)-b_Q}\abs{h(x)}\,\dd x
		\lesssim
		\norm{b}_{\BMO}\norm{h}_{L\log L,Q}.
	\end{equation*}
\end{lemma}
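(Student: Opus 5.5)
The plan is to apply the Orlicz--H\"older inequality of Proposition \ref{prop:orlicz-holder} with the Young pair $\Psi=\Phi$, $\PsiComp=\overline{\Phi}$, where $\Phi(t)=t\log(e+t)$ and $\overline{\Phi}(t)\approx e^t-1$ for large $t$. Taking $f=h$ and $g=b-b_Q$ on the cube $Q$, this gives
\begin{equation*}
	\frac{1}{\abs{Q}}\int_Q\abs{b(x)-b_Q}\abs{h(x)}\,\dd x
	\leq 2\norm{h}_{L\log L,Q}\norm{b-b_Q}_{\overline{\Phi}(L),Q}.
\end{equation*}
If $\norm{h}_{L\log L,Q}=\infty$ there is nothing to prove. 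Since the statement only concerns values on $Q$, we may regard $h$ as a function on $Q$.

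Next I will compare $\norm{\cdot}_{\overline{\Phi}(L),Q}$ with $\norm{\cdot}_{\exp L,Q}$, where the latter is the Luxemburg norm built from $t\mapsto e^t-1$. The asymptotic $\overline{\Phi}(t)\approx e^t-1$ holds only for large $t$, so I have to check the small values of $t$ separately. Directly from the Legendre transform, $\overline{\Phi}(t)\le e^{t}-1$ up to a constant dilation of the argument for all $t\ge0$. For small $t$ we have $\overline{\Phi}(t)=0$ for $t\le 1$, because $\Phi(s)\ge s$. For large $t$, the supremum $\sup_s(st-s\log(e+s))$ is attained near $s\approx e^{t-1}$, which gives $\overline{\Phi}(t)\lesssim e^{t}$. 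Hence there is an absolute constant $K$ with $\overline{\Phi}(t)\le e^{Kt}-1$ for all $t\ge0$. This yields $\norm{g}_{\overline{\Phi}(L),Q}\le K\norm{g}_{\exp L,Q}$.

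Finally, Corollary \ref{cor:bmo-expL} gives $\norm{b-b_Q}_{\exp L,Q}\lesssim\norm{b}_{\BMO}$, and chaining the three estimates proves the lemma. The only step that needs any care is the global comparison of Young functions in the second paragraph, and it is elementary. Alternatively, one can skip that comparison and use Young's inequality directly: apply $st\le\Phi(s)+\overline{\Phi}(t)$ to $s=\abs{h}/\norm{h}_{L\log L,Q}$ and $t=c\abs{b-b_Q}/\norm{b}_{\BMO}$, and control the average of $\overline{\Phi}(t)\lesssim e^{t}$ through Corollary \ref{cor:bmo-intexp} with $c$ small.
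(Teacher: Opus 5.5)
Your proposal is correct and is essentially the paper's argument: the paper gets this lemma in one line by combining the Orlicz--H\"older inequality (Proposition \ref{prop:orlicz-holder}) for the $L\log L$/$\exp L$ pair with the John--Nirenberg bound $\norm{b-b_Q}_{\exp L,Q}\lesssim\norm{b}_{\BMO}$ (Corollary \ref{cor:bmo-expL}). Your explicit check that $\overline{\Phi}(t)\le e^{Kt}-1$ for all $t\ge0$ fills in a step the paper leaves implicit; in fact $K=1$ works, since $\overline{\Phi}$ vanishes on $[0,1]$ and the concave maximization gives $\overline{\Phi}(t)<e^t-e$ for $t>1$.
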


The analytic-family argument of Coifman, Rochberg, and Weiss
\cite{CoifmanRochbergWeiss1976} requires uniform control of the exponential
weights generated by a BMO function.  The precise statement needed below is
as follows.
\begin{lemma}\label{lem:bmo-exponential-weight}
	Let $1<p<\infty$.  There exist constants $c_{d,p},C_{d,p}>0$ such that,
	whenever $b\in\BMO(\Rd)$ and $z\in\mathbb C$ satisfy
	\begin{equation*}
		\abs{z}\norm{b}_{\BMO}\leq c_{d,p},
	\end{equation*}
	the weight
	\begin{equation*}
		w_z(x)\coloneqq e^{p\operatorname{Re}(z)b(x)}
	\end{equation*}
	belongs to $A_p$ and satisfies $[w_z]_{A_p}\leq C_{d,p}$.
\end{lemma}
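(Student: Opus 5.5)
The plan is to show that $w_z=e^{p\operatorname{Re}(z)b}$ satisfies the $A_p$ condition cube by cube, using only the John--Nirenberg estimate in the form of Corollary \ref{cor:bmo-intexp}. Write $t\coloneqq p\operatorname{Re}(z)$, so that $w_z=e^{tb}$. Fix a cube $Q$. Since the $A_p$ characteristic
\begin{equation*}
	\paren{\frac{1}{\abs{Q}}\int_Q w_z}\paren{\frac{1}{\abs{Q}}\int_Q w_z^{-1/(p-1)}}^{p-1}
\end{equation*}
is unchanged when $w_z$ is multiplied by a positive constant, we may replace $b$ by $b-b_Q$ on $Q$. It therefore suffices to bound
\begin{equation*}
	\paren{\frac{1}{\abs{Q}}\int_Q e^{t(b-b_Q)}}\paren{\frac{1}{\abs{Q}}\int_Q e^{-\frac{t}{p-1}(b-b_Q)}}^{p-1}
\end{equation*}
by a constant depending only on $d$ and $p$.

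For each factor we use the pointwise bound $e^{\pm s(b-b_Q)}\leq e^{s\abs{b-b_Q}}$ with $s\geq0$. The first factor has $s=\abs{t}\leq p\abs{z}$, and the second has $s=\abs{t}/(p-1)\leq p'\abs{z}$. Choose $c_{d,p}\coloneqq c_d/\max(p,p')$, where $c_d$ is the constant in Corollary \ref{cor:bmo-intexp}. Then $\abs{z}\Bnorm\leq c_{d,p}$ gives $s\Bnorm\leq c_d$ in both cases. Corollary \ref{cor:bmo-intexp} then bounds each average by $1+C_d c_d$. Hence the product is at most $(1+C_dc_d)^{p}$. This bound is uniform in $Q$, $b$ and $z$, so we may take $C_{d,p}=(1+C_dc_d)^p$.

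Two routine points remain. First, $w_z$ is locally integrable and positive almost everywhere, by the same corollary applied on every cube, so $w_z$ is a genuine weight. Second, if $\Bnorm=0$ then $b$ is constant and $w_z$ is a constant weight, so the claim is trivial. There is no real obstacle: the only substantive point is matching the smallness threshold to both exponents $p$ and $p'$, which is exactly why the constant $c_{d,p}$ depends on $p$.
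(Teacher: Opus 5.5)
Your proposal is correct and follows the paper's proof: cancel the constant factor $e^{p\operatorname{Re}(z)b_Q}$ from the $A_p$ expression, then apply Corollary \ref{cor:bmo-intexp} to each factor with $s=p|z|$ and $s=p'|z|$. Your explicit choices $c_{d,p}=c_d/\max(p,p')$ and $C_{d,p}=(1+C_dc_d)^p$ only make precise what the paper phrases as choosing $c_{d,p}$ ``sufficiently small.''
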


\begin{proof}
	For each cube $Q$, the constant factor
	$e^{p\operatorname{Re}(z)b_Q}$ cancels from the $A_p$ expression.  Hence
	\begin{align*}
		&\paren{\frac{1}{\abs Q}
		\int_Qe^{p\operatorname{Re}(z)b(x)}\,\dd x}
		\paren{\frac{1}{\abs Q}
		\int_Qe^{-p'\operatorname{Re}(z)b(x)}\,\dd x}^{p-1}\\
		&\quad=
		\paren{\frac{1}{\abs Q}
		\int_Qe^{p\operatorname{Re}(z)(b-b_Q)}\,\dd x}
		\paren{\frac{1}{\abs Q}
		\int_Qe^{-p'\operatorname{Re}(z)(b-b_Q)}\,\dd x}^{p-1}.
	\end{align*}
	If $c_{d,p}$ is sufficiently small, Corollary
	\ref{cor:bmo-intexp}, applied with $s=p\abs z$ and $s=p'\abs z$,
	bounds both factors uniformly in $Q$.  Taking the supremum over $Q$
	proves the assertion.
\end{proof}

\subsection{Interpolation results}\label{subsec:interpolation-results}

The decomposition used later requires interpolation between an $L^1$ bound
with favorable scale decay and a uniform $L^3$ bound.  We first record the
elementary special case of the Riesz-Thorin theorem that will be used.

\begin{proposition}\label{prop:L1-L3-interpolation}
	Let $T$ be a linear operator on $\Rd$ such that
	\begin{equation*}
		\norm{Tf}_{L^1}\leq A_1\norm{f}_{L^1},
		\quad
		\norm{Tf}_{L^3}\leq A_3\norm{f}_{L^3}.
	\end{equation*}
	Then
	\begin{equation*}
		\norm{Tf}_{L^2}
		\leq
		A_1^{1/4}A_3^{3/4}\norm{f}_{L^2}.
	\end{equation*}
\end{proposition}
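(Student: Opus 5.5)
This is the Riesz–Thorin theorem specialized to the exponents $p_0=1$ and $p_1=3$, so the plan is only to pick the interpolation parameter. We look for $\theta\in(0,1)$ with
\begin{equation*}
	\frac12=\frac{1-\theta}{1}+\frac{\theta}{3}.
\end{equation*}
This gives $1-\tfrac{2\theta}{3}=\tfrac12$, hence $\theta=\tfrac34$ and $1-\theta=\tfrac14$. Riesz–Thorin then yields
\begin{equation*}
	\norm{T}_{L^2\to L^2}\leq A_1^{1-\theta}A_3^{\theta}=A_1^{1/4}A_3^{3/4}.
\end{equation*}
This holds on the complex-valued $L^p$ spaces with constant exactly $1$ in front. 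The operators to which we apply it act on complex functions, so the complex version with no loss in the constant applies directly.

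If the hypotheses are only known on a dense class, for instance simple functions or $L^1\cap L^3$, we first prove the bound there. The simple functions of finite-measure support lie in $L^1\cap L^2\cap L^3$, so the Riesz–Thorin argument applies to them without change. We then extend $T$ to $L^2$ by density. The extension is consistent with $T$ on $L^1\cap L^3\cap L^2$, since the $L^2$ bound already holds there.

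For completeness, the proof of Riesz–Thorin itself goes as follows. Fix simple $f$ and $g$ with $\norm{f}_{2}=\norm{g}_{2}=1$. Define
\begin{equation*}
	f_z=\abs{f}^{2\left(\frac{1-z}{1}+\frac{z}{3}\right)}\frac{f}{\abs{f}},
	\qquad
	g_z=\abs{g}^{2\left(1-\left(\frac{1-z}{1}+\frac{z}{3}\right)\right)}\frac{g}{\abs{g}},
\end{equation*}
and set $F(z)=\int Tf_z\,g_z$. This $F$ is entire, of admissible growth, and bounded on the strip $0\le\operatorname{Re}z\le1$. On the line $\operatorname{Re}z=0$ we have $\abs{F}\le A_1$, and on $\operatorname{Re}z=1$ we have $\abs{F}\le A_3$. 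The three lines lemma then gives $\abs{F(3/4)}\le A_1^{1/4}A_3^{3/4}$, and $F(3/4)=\int Tf\,g$, so duality completes the proof.

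No step is a genuine obstacle. The only care needed is to apply the theorem on a dense class where both hypotheses make sense and to keep track of the exponents correctly.
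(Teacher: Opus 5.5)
Your proposal is correct and takes the same approach as the paper, whose entire proof is to apply the Riesz–Thorin theorem with $\theta=3/4$. The exponent computation, the explicit analytic family, the boundary estimates on $\operatorname{Re}z=0,1$, and the remark on working with simple functions are all correct.
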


\begin{proof}
	Apply the Riesz-Thorin interpolation theorem with interpolation parameter
	$\theta=3/4$.
\end{proof}

We shall also use the Stein-Weiss interpolation theorem with change of
measures.

\begin{theorem}[Stein-Weiss interpolation theorem \cite{SteinWeiss1958}]\label{thm:stein-weiss}
	Let $T$ be a linear operator, and let $v_i,w_i$ be positive measurable
	weights for $i=0,1$.  Suppose that
	\begin{equation*}
		\norm{Tf}_{L^{p_i}(w_i)}
		\leq
		M_i\norm{f}_{L^{p_i}(v_i)},
		\quad
		1\leq p_i<\infty.
	\end{equation*}
	For $0<\theta<1$, define $p_\theta$ by
	\begin{equation*}
		\frac{1}{p_\theta}
		=\frac{1-\theta}{p_0}+\frac{\theta}{p_1},
	\end{equation*}
	and define
	\begin{align*}
		w_\theta
		& \coloneqq w_0^{p_\theta(1-\theta)/p_0}
		w_1^{p_\theta\theta/p_1},
		\label{eq:stein-weiss-output-weight}\\
		v_\theta
		& \coloneqq v_0^{p_\theta(1-\theta)/p_0}
		v_1^{p_\theta\theta/p_1}.
	\end{align*}
	Then
	\begin{equation*}
		\norm{Tf}_{L^{p_\theta}(w_\theta)}
		\leq
		M_0^{1-\theta}M_1^\theta
		\norm{f}_{L^{p_\theta}(v_\theta)}.
	\end{equation*}
\end{theorem}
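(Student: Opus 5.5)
This theorem is quoted from \cite{SteinWeiss1958}, so the plan is to reproduce the classical complex-interpolation argument with changes of measure, adapted to the weighted setting. We only need the weighted Riesz--Thorin scheme, i.e.\ an application of Hadamard's three lines lemma.

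The first step is to reduce to simple functions and a bilinear form. By density, fix a simple $f$ with $\norm{f}_{L^{p_\theta}(v_\theta)}=1$ and a simple $g$ with $\norm{g}_{L^{p_\theta'}(w_\theta)}=1$. It then suffices to show
\[
\abs{\int Tf\, g\, w_\theta\,\dd x}\le M_0^{1-\theta}M_1^\theta .
\]

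The second step is to build analytic families that absorb both the exponents and the weights. Write $f=\abs{f}e^{i\alpha}$ and $g=\abs{g}e^{i\beta}$. For $0\le\operatorname{Re}z\le1$, set $1/p(z)=(1-z)/p_0+z/p_1$ and
\[
f_z=\abs{f}^{p_\theta/p(z)}e^{i\alpha}\,v_\theta^{p_\theta/p(z)}\,v_0^{-(1-z)/p_0}v_1^{-z/p_1}.
\]
The weight factor is arranged so that $\abs{f_{it}}^{p_0}v_0=\abs{f}^{p_\theta}v_\theta$ and $\abs{f_{1+it}}^{p_1}v_1=\abs{f}^{p_\theta}v_\theta$, and $f_\theta=f$. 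This works because $v_\theta^{1/p_\theta}=v_0^{(1-\theta)/p_0}v_1^{\theta/p_1}$. Define $g_z$ analogously from $g$ using the dual exponents $p'(z)$ and the weights $w_i$. Choose the normalization so that the pairing $\int T f_z\, g_z\, W_z$ has $W_z=w_0^{(1-z)/p_0}w_1^{z/p_1}$ (times a compensating power) and reduces to $\int Tf\,g\,w_\theta$ at $z=\theta$.

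The third step is to apply the three lines lemma. Let $F(z)=\int T f_z\, g_z\, W_z\,\dd x$. Since $f_z$ is a finite sum of terms $e^{(a+bz)}\chi_E$ with $\chi_E$ a characteristic function, linearity of $T$ makes $F$ entire and bounded on the strip. On $\operatorname{Re}z=0$, Hölder's inequality in $L^{p_0}(w_0)$ gives
\[
\abs{F(it)}\le\norm{Tf_{it}}_{L^{p_0}(w_0)}\norm{\,\cdot\,}_{L^{p_0'}(w_0)}\le M_0 ,
\]
and likewise $\abs{F(1+it)}\le M_1$. The three lines lemma then yields $\abs{F(\theta)}\le M_0^{1-\theta}M_1^\theta$.

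The main obstacle is the bookkeeping in the second step. The powers of the weights must be chosen so that three things hold simultaneously: $f_\theta=f$, $g_\theta$ pairs correctly against $w_\theta$, and the boundary norms collapse to exactly $1$. This amounts to solving linear equations in the exponents, which is routine but tedious. Degenerate weights, meaning zero or infinite values, are handled by truncating them and passing to the limit with monotone convergence.
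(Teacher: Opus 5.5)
The paper does not prove this theorem; it only quotes it from Stein and Weiss. Your three-lines scheme is the standard route. However, the one step where the weighted case differs from Riesz--Thorin is asserted rather than proved, and the assertion is false.

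First, a slip in Step 2. With the exponent $p_\theta/p(z)$ on $v_\theta$ you get $f_\theta=f\,v_\theta^{1-1/p_\theta}$ and $\abs{f_{it}}^{p_0}v_0=\abs{f}^{p_\theta}v_\theta^{p_\theta}$; the exponent must be $1/p(z)$. With that correction, a direct computation gives
\[
v_\theta^{1/p(z)}v_0^{-(1-z)/p_0}v_1^{-z/p_1}
=\Bigl(\frac{v_\theta}{v_0}\Bigr)^{1/p_0}\Bigl(\frac{v_0}{v_1}\Bigr)^{zp_\theta/(p_0p_1)} .
\]
A concrete choice for $g_z$ is $\abs{g}^{p_\theta'/p'(z)}e^{i\beta}w_\theta^{1/p'(z)}w_0^{(1-z)/p_0}w_1^{z/p_1}$, with $F(z)=\int Tf_z\,g_z\,\dd x$. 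It likewise carries the factor $(w_1/w_0)^{zp_\theta/(p_0p_1)}$.

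These factors depend on $x$ and are in general unbounded. So $f_z$ is not a finite sum of terms $e^{a+bz}\chi_E$, and linearity of $T$ says nothing about analyticity of $F$. Moreover, for $0<\operatorname{Re}z<1$ you have not shown that $f_z$ lies in $L^{p_0}(v_0)\cap L^{p_1}(v_1)$ with norms bounded on the strip, nor that $g_z$ can be paired with $Tf_z$. Thus neither analyticity nor boundedness of $F$, both required by the three lines lemma, is justified.

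The missing idea is to restrict the supports of the test functions rather than truncate the weights. Truncating the weights is not permissible in general: lowering $v_i$ or raising $w_i$ can destroy the hypotheses. The real issue is not zero or infinite values but unbounded ratios $v_0/v_1$, $w_0/w_1$.

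Take $f$ simple with support in $\{N^{-1}\le v_0/v_1\le N\}$, and $g$ simple with support in $\{N^{-1}\le w_0/w_1\le N\}$.
\begin{enumerate}[(i)]
\item The map $z\mapsto (v_0/v_1)^{cz}\chi_{\supp f}$ is entire with values in $L^\infty$. Hence $z\mapsto f_z$ is entire with values in $L^{p_0}(v_0)\cap L^{p_1}(v_1)$, with norms bounded on the strip.
\item Use the boundedness of $T\colon L^{p_0}(v_0)\to L^{p_0}(w_0)$, not merely linearity, to conclude that $z\mapsto Tf_z$ is analytic.
\item In the same way, $z\mapsto g_z$ is entire with values in $L^{p_0'}(w_0^{1-p_0'})$, the dual of $L^{p_0}(w_0)$ under the unweighted pairing.
\end{enumerate}
Then $F$ is analytic and bounded on the strip, and your boundary estimates together with the three lines lemma go through.

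Finally, let $N\to\infty$. Monotone convergence handles $g$, since for weights that are positive and finite a.e.\ these sets increase to a set of full measure. Density of such $f$ in $L^{p_\theta}(v_\theta)$, plus a limiting argument (Fatou), handles $f$. The case $p_0=p_1=1$, where $p_\theta'=\infty$, needs a separate choice such as $g_z=g\,w_0^{1-z}w_1^{z}$.
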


\begin{remark}\label{rem:stein-weiss-p2}
	In the case $p_0=p_1=2$, one has $p_\theta=2$ and
	\begin{equation*}
		w_\theta=w_0^{1-\theta}w_1^\theta,
		\quad
		v_\theta=v_0^{1-\theta}v_1^\theta.
	\end{equation*}
	This is the form used in the high-frequency multiplier estimates.
\end{remark}

\section{Reduction of the main theorem and linearization}
\label{sec:reduction-linearization}

We begin by replacing the continuous truncations with dyadic ones and applying
the Calder\'on-Zygmund decomposition.  We then localize the operators in
physical space, split the rough kernel according to its size, and stratify the
relevant dyadic cubes.  A Rademacher-Menshov argument ultimately reduces the
maximal estimate to a signed $L^2$ inequality.
\vspace{0.3cm}

\noindent \textbf{Reduction from continuous to dyadic truncations.} Choose a radial function $\varphi\in C_c^{\infty}(\Rd)$ such that
\begin{equation*}\label{eq:phi-support-partition}
	\supp\varphi
	\subset\set{x\in\Rd:2^{-4}<\abs{x}<2^{-2}},
	\quad
	\sum_{j\in\Z}\varphi_j(x)=1,
\end{equation*}
where $\varphi_j(x) \coloneqq \varphi(2^{-j}x)$. Define
\begin{equation*}\label{eq:dyadic-kernels-operators}
	\begin{aligned}
		& \mathcal{K}_j(x) \coloneqq \varphi_j(x)\frac{\Omega(x)}{\abs{x}^{d}}, \\
		& T_jf \coloneqq \mathcal{K}_j\ast f,\\
		& T_{b,j}f \coloneqq [b,T_j]f.
	\end{aligned}
\end{equation*}
The associated dyadic maximal truncations are given by
\begin{equation*}
	\begin{aligned}
		\Tmc_{\ast}f(x) &  \coloneqq \sup_{l\in\Z} \bigg| \sum_{j\geq l}T_j f(x) \bigg|, \\
		\mathcal T_{b,\ast}f(x) &  \coloneqq \sup_{l\in\Z}
		\bigg|\sum_{j\geq l}T_{b,j}f(x) \bigg|.
	\end{aligned}
\end{equation*}
We also introduce the rough Hardy-Littlewood maximal commutator
\begin{equation*}
	M_{\Omega,b}f(x)
	\coloneqq \sup_{r>0}\frac{1}{r^d}
	\int_{B(x,r)}
	\abs{\Omega(x-y)}
	\abs{b(x)-b(y)}
	\abs{f(y)}\,\dd y.
\end{equation*}
The support of the partition of unity, together with a direct estimate on the
annulus containing the truncation radius, gives
\begin{equation}\label{eq:continuous-dyadic-pointwise}
	T_{\Omega,b,\ast}f(x)
	\lesssim
	M_{\Omega,b}f(x)+\mathcal T_{b,\ast}f(x).
\end{equation}
The weak-type estimate for $M_{\Omega,b}$ and
\eqref{eq:continuous-dyadic-pointwise} reduce the main theorem to the following
endpoint bound for $\mathcal T_{b,\ast}$.
\begin{theorem}\label{thm:dyadic}
	Let $d\geq2$.  Suppose that $\Omega$ is homogeneous of degree zero on
	$\Rd\setminus\{0\}$, has mean zero on $\Sd$, and belongs to
	$L(\log L)^2(\Sd)$.  If $b\in\BMO(\Rd)$, then, for every $\lambda>0$,
	\begin{equation*}
		\abs{\set{x\in\Rd:\mathcal T_{b,\ast}f(x)>\lambda}}
		\lesssim
		\int_{\Rd}
		\Phi\paren{
			\frac{\abs{f(x)}}{\lambda}
		}
		\,\dd x,
	\end{equation*}
	where $\Phi(t)=t\log(e+t)$.
\end{theorem}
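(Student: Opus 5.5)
By homogeneity (replace $f$ by $f/\lambda$) I will assume $\lambda=1$, and by scaling $b$ also $\|b\|_{\BMO}=1$. The plan follows the outline in the introduction.

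\textbf{Step 1: Calder\'on--Zygmund decomposition.} Apply the decomposition to $f$ at height $1$ adapted to $\Phi$. This gives a family of disjoint dyadic cubes $\{Q\}$ with $\sum_Q|Q|\lesssim\int\Phi(|f|)$ and $\|f\|_{L\log L,Q}\lesssim1$, and a splitting $f=g+\sum_Q h_Q$ with $\|g\|_\infty\lesssim1$, $\|g\|_1\le\|f\|_1$, $\supp h_Q\subset Q$ and $\int h_Q=0$. The exceptional set $E=\bigcup_Q 2^{10}Q$ has acceptable measure.

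\textbf{Step 2: the good part.} Hu's strong $L^2$ bound for the maximal truncated commutator under $\Omega\in L(\log L)^2$ also holds for the dyadic version $\mathcal T_{b,\ast}$. Chebyshev then gives $|\{\mathcal T_{b,\ast}g>1/2\}|\lesssim\|g\|_2^2\lesssim\|f\|_1$.

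\textbf{Step 3: the bad part outside $E$.} For $x\notin E$ and $Q$ of side $2^{k}$, only $j\ge k+s$ with $s\ge 5$ contribute. I write
\[
T_{b,j}h_Q=(b-b_Q)T_jh_Q-T_j\bigl((b-b_Q)h_Q\bigr).
\]

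For the second term, set $\tilde h_Q=(b-b_Q)h_Q$. Lemma~\ref{lem:bmo-orlicz} gives $\|\tilde h_Q\|_1\lesssim|Q|\,\|h_Q\|_{L\log L,Q}\lesssim |Q|$. The functions $\tilde h_Q$ have no cancellation, so I subtract $(\int\tilde h_Q)\,|Q|^{-1}\chi_Q$. The mean-zero part is handled by Lai's estimate \eqref{eq:Lai-maximal-weak11}; more precisely, by its dyadic bad-part version, which only needs $L^1$ control and cancellation. The averaged part is a bounded good function with total $L^1$ mass $\lesssim\sum|Q|$, so $L^2$ bounds for $\Tmc_\ast$ handle it.

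For the first term, I split $\Omega=\Omega\chi_{\{|\Omega|>2^{\eta s}\}}+\Omega\chi_{\{|\Omega|\le2^{\eta s}\}}$ at each separation $s$.

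\emph{Large part.} I estimate in $L^1$ directly. The factor $|b(x)-b_Q|$ on the support $|x-c_Q|\sim2^{k+s}$ costs $\lesssim s$ by John--Nirenberg (compare $b_Q$ with $b_{2^sQ}$). Summing in $s$ gives
\[
\sum_s s\,\|\Omega\chi_{\{|\Omega|>2^{\eta s}\}}\|_{L^1(\Sd)}\lesssim\|\Omega\|_{L(\log L)^2},
\]
and this is exactly where the $L(\log L)^2$ hypothesis is used.

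\emph{Bounded part.} I linearize the supremum over $l$ and control the maximal partial sums over $j$ by the Rademacher--Menshov inequality, losing only a factor $\log$ of the number of scales. This reduces matters to a signed $L^2$ estimate: for each $s$,
\[
\Bigl\|\sum_Q(b-b_Q)\sum_{j}\epsilon_j(x)\,T_j^{s}h_Q\Bigr\|_2^2\lesssim 2^{-\delta s}\sum_Q|Q|,
\]
with $\delta>0$, uniformly over measurable selector functions $\epsilon_j(x)\in\{0,1\}$ obeying Lai's dyadic organization.

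\textbf{Step 4: the signed $L^2$ estimate, which I expect to be the main obstacle.} I will smooth the angular and radial cutoffs at scale $2^{-\gamma s}$; the boundary errors are then small in $L^1$. After that I split frequencies at $|\xi|\sim2^{-j+\gamma s}$. The low-frequency piece gains from the cancellation of $h_Q$. The boundary and low-frequency terms get $L^1$ decay $2^{-cs}$, while $L^3$ bounds cost at most $2^{C\eta s}$; Proposition~\ref{prop:L1-L3-interpolation} together with Lemma~\ref{lem:orlicz-local-interpolation} then yields net decay once $\eta$ is small.

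For the high-frequency piece $u$, I use the identity
\[
(b-b_Q)u=G_\nu[(b-b_Q)u]+[b,G_\nu]u+(b-b_Q)(I-G_\nu)u.
\]
The microlocal term $G_\nu[(b-b_Q)u]$ is handled by $TT^*$ and Seeger-type stationary phase in $\nu$. The commutator term $[b,G_\nu]u$ is written through the analytic family $e^{zb}G_\nu e^{-zb}$ and Cauchy's formula; Lemma~\ref{lem:bmo-exponential-weight} and Theorem~\ref{thm:stein-weiss} transfer the unweighted decay to it. The remainder $(b-b_Q)(I-G_\nu)u$ uses the rapid decay of $(I-G_\nu)u$ off the microlocal region, combined with Corollary~\ref{cor:bmo-Lp}.

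The delicate point is to balance $\eta$ and $\gamma$ against the Rademacher--Menshov logarithm so that every term decays like $2^{-\delta s}$, which makes the sum over $s$ converge.
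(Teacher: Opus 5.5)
Your outline follows the paper's architecture. Steps 1 and 2 are fine, and so is your treatment of $T_j\bigl((b-b_Q)h_Q\bigr)$, although subtracting averages is unnecessary: Lai's bound is weak $(1,1)$ for every $L^1$ function, so it applies directly to $\sum_Q(b-b_Q)h_Q$. The large-$\Omega$ part also works, with one correction: pairing $b-b_J$ with the rough $\Omega_m$ costs $m\norm{\Omega_m}_{L^1}+2^{-m}$ through Orlicz duality, not $s\norm{\Omega_m}_{L^1}$. The sum is still $\sum_m m^2\norm{\Omega_m}_{L^1}$, so the conclusion survives.

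The genuine gap is in the reduction of the bounded part. Rademacher--Menshov loses $\log(2+N)$, where $N$ is the length of the sequence. For fixed $s$ the set of scales $j$ with $\Qmc_{j-s}\neq\varnothing$ is unbounded, so ``a factor $\log$ of the number of scales'' is not $O(s)$ and cannot be absorbed by $2^{-\delta s}$. Moreover, Lemma~\ref{lem:rademacher-menshov-sec3} needs constant signs, but you state the signed estimate for $x$-dependent selectors $\epsilon_j(x)\in\{0,1\}$. Multiplying by arbitrary measurable selectors destroys any frequency localization, so no microlocal argument can run on that expression.

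What closes the reduction is the concrete content of Lai's stratification:
\begin{itemize}
\item The sets $F^n_{s,\iota}$ satisfy $\abs{F^n_{s,\iota}}\lesssim 2^{-2n}\norm{f}_1$ (Lemma~\ref{lem:F-measure-decay}). They define layers $\mathcal I^{\#,n}_{s,\iota}$ in which every point lies in at most $u_0=C_02^{ds}$ nested cubes $J^\iota$, so the Rademacher--Menshov loss is $\log(2+u_0)\lesssim s$.
\item The signs $\eps_u$ are constant, and the $x$-dependence enters only through $\chi_{J^\iota}(x)$ with $\ell(J)=2^j$.
\item The signed bound must carry the factor $2^{-n}$ from Lemma~\ref{lem:layered-bad-mass}, so that the layers can be summed.
\end{itemize}

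This also determines what Step 4 must smooth. It is not the kernel's cutoffs: $\varphi_j$ is already smooth, and the angular cutoffs $\chi_{E_\nu}$ are never smoothed. It is the cube indicators $\chi_{J^\iota}$, replaced by $P_{j-s\kappa}\chi_{J^\iota}$. The boundary layer has relative measure $2^{-s\kappa}$ and is controlled in $L^1$ by Corollary~\ref{cor:bmo-subset-integral}.

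The scales must then be kept distinct, $\eta\ll\gamma\ll\kappa<\varepsilon$ with $\varepsilon>\kappa+\gamma$. In the remainder, each integration by parts in $r$ gains only $2^{-(j-k)+s(\kappa+\gamma)}$ at frequency $2^{-k}$: one loss comes from $\nabla\widetilde\chi_{J^\iota}$, the other from $\abs{\inner{\theta}{\xi}}\gtrsim 2^{-s\gamma-k}$. So there is no ``rapid decay of $(I-G_\nu)u$'' on the part $s\kappa\lesssim j-k\le s(\kappa+\gamma)$ of the frequency support of $u$; with your single parameter, this is the whole range $\gamma s\lesssim j-k\le 2\gamma s$.

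On that range the paper uses the cancellation of $h_Q$ (the terms $D^{(1)}$--$D^{(3)}$). Since it can only sum over $(J,Q)$ in $L^1$, it interpolates against an $L^3$ bound, which in turn needs the $L^3$ commutator estimate \eqref{eq:lemma46-commutator-L3}. Corollary~\ref{cor:bmo-Lp} by itself does not control the $L^2$ interactions between different pairs $(J,Q)$.

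Finally, Lemma~\ref{lem:orlicz-local-interpolation} is not what closes the boundary and low-frequency terms; the plain bound $\norm{\cdot}_2\le\norm{\cdot}_1^{1/4}\norm{\cdot}_3^{3/4}$ does that. It is needed in the $TT^*$ term, to absorb the product $(b-b_Q)\overline{(b-b_R)}$ through the counting function $S^\nu_J$. That function has small $L^1$ average but only a uniform $L^2(\log L)^{-2}$ bound, and your sketch does not address this step.
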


\noindent \textbf{Dyadic localization in physical space.} Let the standard dyadic grid be
\begin{equation*}\label{eq:standard-dyadic-grid}
	\D
	\coloneqq \set{
		2^j \big(\vec{m}+[0,1)^d\big):
		\vec{m}\in\Z^d,\ j\in\Z
	}.
\end{equation*}
For $\vec w\in\{0,\frac12\}^d$, define the shifted dyadic grid
\begin{equation*}\label{eq:shifted-dyadic-grid}
	\D^{\vec w}
	\coloneqq \set{
		2^j\big(\vec{m}+\vec w+[0,1)^d \big):
		\vec{m}\in\Z^d,\ j\in\Z
	}.
\end{equation*}
For a cube $J$, let $\frac12J$ denote the concentric cube with side length
$\ell(J)/2$.  For each $j\in\Z$, one has
\begin{equation}\label{eq:shifted-grid-partition}
	\sum_{\vec w\in\{0,\frac12\}^d}
	\sum_{\substack{J\in\D^{\vec w}\\\ell(J)=2^j}}
	\chi_{\frac12 J}=1.
\end{equation}
For $J\in\D^{\vec w}$ with $\ell(J)=2^j$, define the localized operators
\begin{align*}
	T_Jf(x)
	& \coloneqq \int_{\Rd}
	\mathcal{K}_j(x-y)f(y)\chi_{\frac12 J}(y)\,\dd y, \\
	T_{b,J}f(x)
	& \coloneqq \int_{\Rd} \paren{b(x)-b(y)} 	\mathcal{K}_j(x-y)
	f(y)\chi_{\frac12 J}(y)\,\dd y.
\end{align*}
It follows from \eqref{eq:shifted-grid-partition} that
\begin{equation}\label{eq:localized-operator-sum}
	\begin{aligned}
		T_j f & =\sum_{\vec w\in\{0,\frac12\}^d}
		\sum_{\substack{J\in\D^{\vec w}\\\ell(J)=2^j}}
		T_J f ,\\
		T_{b,j}f & =\sum_{\vec w\in\{0,\frac12\}^d}
		\sum_{\substack{J\in\D^{\vec w}\\\ell(J)=2^j}}
		T_{b,J}f.
	\end{aligned}
\end{equation}
Moreover, the support of $\mathcal{K}_j$ and the distance from $\frac12 J$ to $J^c$ imply that
\begin{equation}\label{eq:localized-support}
	\begin{aligned}
		\supp T_Jf & \subset J, \\
		\supp T_{b,J}f & \subset J.
	\end{aligned}
\end{equation}
The support property \eqref{eq:localized-support} permits a decomposition into
nested chains of cubes, which will be used to linearize the maximal operator.

\noindent \textbf{Calder\'on-Zygmund decomposition and reduction of the bad part.} By homogeneity, it suffices to consider $\lambda=1$.  Apply the
Calder\'on-Zygmund decomposition to $f$, relative to the standard dyadic grid
$\D$, at height $1$.  Thus $f=g+h$, where
\begin{align*}
	g
	& \coloneqq f\chi_{(\bigcup_{Q\in\Qmc}Q)^c}
	+\sum_{Q\in\Qmc}f_Q\chi_Q,\\
	h
	& \coloneqq \sum_{Q\in\Qmc}h_Q,
	\\
	h_Q \coloneqq \paren{f-f_Q}\chi_Q,
\end{align*}
and $\mathcal Q$ is the collection of bad cubes.  The $L^2$ boundedness of the
maximal truncated commutator, Chebyshev's inequality, and the standard
properties of this decomposition give
\begin{equation*}
	\abs{\set{x:\mathcal T_{b,\ast}g(x)>1/2}}
	\lesssim \norm{\mathcal T_{b,\ast}g}_{L^2}^2
	\lesssim
	\Bnorm^2\|f\|_{L^1}.
\end{equation*}
It remains to control the contribution of the bad part $h$, namely,
\begin{equation*}
	\abs{\set{x:\mathcal T_{b,\ast}h(x)>1/2}}.
\end{equation*}
For each $Q\in\Qmc$, write
\begin{equation*}
	b(x)-b(y)
	=\paren{b(x)-b_Q}-\paren{b(y)-b_Q}.
\end{equation*}
This yields the pointwise decomposition
\begin{equation}
	\label{eq:bad-commutator-two-parts}
	\begin{aligned}
		\mathcal T_{b,\ast}h(x)
		&\leq
		\sup_{l\in\Z} \bigg| 
		\sum_{j \geq l}\sum_{Q\in\Qmc}
		\paren{b(x)-b_Q}T_jh_Q(x)
		\bigg| \\
		&\quad+
		\Tmc_{\ast} \Big( 
		\sum_{Q\in\Qmc}\paren{b-b_Q}h_Q
		\Big)(x).
	\end{aligned}
\end{equation}

The weak $(1,1)$ bound for the dyadic maximal truncation of the rough singular
integral \cite{Lai2025}, together with Lemma \ref{lem:bmo-orlicz}, gives
\begin{align*}
	&\bigg| \bigg\{ x:\Tmc_{\ast} \bigg( 
	\sum_{Q\in\Qmc}(b-b_Q)h_Q \bigg)
	(x)>1/4 \bigg\} \bigg|
	\notag\\
	&\quad\lesssim
	\sum_{Q\in\Qmc}
	\norm{(b-b_Q)h_Q}_1
	\notag\\
	&\quad\lesssim
	\Bnorm
	\sum_{Q\in\Qmc}\abs Q\,
	\norm{h_Q}_{L\log L,Q}
	\notag\\
	&\quad\lesssim
	\Bnorm\|\Phi(|f|)\|_{L^1}.
\end{align*}
The last inequality uses the Calder\'on-Zygmund decomposition and the bound
$1<|f|_Q\leq\Phi(|f|)_Q$.  It remains to estimate the first term in
\eqref{eq:bad-commutator-two-parts}, which contains
\begin{equation*}
	\paren{b(x)-b_Q}T_jh_Q(x).
\end{equation*}
More precisely, we must estimate
\begin{equation*}
	\bigg| \bigg\{x: \sup_{l\in\Z} \Big| 
	\sum_{j \geq l}\sum_{Q\in\Qmc}
	\paren{b(x)-b_Q}T_jh_Q(x)
	\Big| > 1/4 \bigg\} \bigg|.
\end{equation*}
Set
\begin{equation*}
	\begin{aligned}
		\Qmc_j &  \coloneqq \set{Q\in\Qmc:\ell(Q)=2^j}, \\
		h_j &  \coloneqq \sum_{Q\in\Qmc_j}h_Q.
	\end{aligned}
\end{equation*}
Define
\begin{equation*}
	E^{\ast} \coloneqq \bigcup_{Q\in\Qmc}2^{300}Q.
\end{equation*}
By the properties of the Calder\'on-Zygmund decomposition,
\begin{equation*}
	\abs{E^{\ast}}\lesssim\|f\|_{L^1}.
\end{equation*}
If $x\notin E^{\ast}$, the support of the kernel implies that
\begin{equation*}
	T_jh_{j-s}(x)=0,
\end{equation*}
whenever $s<200$.  Consequently, \eqref{eq:localized-operator-sum} gives, for
every $x\notin E^{\ast}$,
\begin{align*}
	&\sup_{l\in\Z} \bigg| 
	\sum_{j \geq l}\sum_{Q\in\Qmc}
	\paren{b(x)-b_Q}T_jh_Q(x) \bigg| 
	\notag\\
	&\quad\leq
	\sum_{\vec w\in\{0,\frac12\}^d}
	\sum_{s\geq200}
	\sup_{l\in\Z} \bigg| \sum_{\substack{J\in\D^{\vec w}\\\ell(J)\geq2^l}}
	\sum_{\substack{Q\in\Qmc_{j-s}\\Q\subset\frac12 J}}
	\paren{b(x)-b_Q}T_Jh_Q(x) \bigg|,
\end{align*}
Here $j=j(J)$ is determined by $\ell(J)=2^j$.  Since there are only $2^d$
possible shifts, we fix $\vec w\in\{0,\frac12\}^d$ and reduce the problem to
estimating
\begin{equation}\label{eq:fixed-shifted-grid-goal}
	\bigg| \bigg\{x:
	\sum_{s\geq200}
	\sup_{l\in\Z} \bigg| \sum_{\substack{J\in\D^{\vec w}\\\ell(J)\geq2^l}}
	\sum_{\substack{Q\in\Qmc_{j-s}\\Q\subset\frac12 J}}
	\paren{b(x)-b_Q}T_J h_Q(x) \bigg| > 2^{-d-2}\bigg\} \bigg|.
\end{equation}

\noindent \textbf{Decomposition of the rough kernel by size.} Fix a sufficiently small constant $\eta>0$. For each $s\geq200$, decompose $\Omega$ as
\begin{align*}
	\Omega_\infty(\theta)
	& \coloneqq \Omega(\theta)
	\chi_{\{\abs{\Omega(\theta)}>2^{\eta s}\}},
	\\
	\Omega_0(\theta)
	& \coloneqq \Omega(\theta)
	\chi_{\{\abs{\Omega(\theta)}\leq2^{\eta s}\}}.
	\label{eq:omega-small-part}
\end{align*}
Correspondingly, write
\begin{equation*}
	\mathcal{K}_j=\mathcal{K}_{j,\infty}+\mathcal{K}_{j,0},
\end{equation*}
and let $T_{J,i}$ denote the operator obtained from $T_J$ by replacing
$\mathcal K_j$ with $\mathcal K_{j,i}$.  Thus
\begin{equation*}
	T_J=T_{J,\infty}+T_{J,0}.
\end{equation*}

The large-amplitude part can be summed directly in $L^1$.

\begin{proposition}
	\label{prop:large-omega-L1}
	For each fixed $\vec w\in\{0,\frac12\}^d$,
	\begin{align*}
		&\sum_{s\geq200}
		\sum_{J\in\D^{\vec w}}
		\sum_{\substack{Q\in\Qmc_{j-s}\\Q\subset\frac12 J}}
		\norm{
			\paren{b-b_Q}T_{J,\infty}h_Q
		}_1 \lesssim
		\Bnorm\norm f_1.
	\end{align*}
\end{proposition}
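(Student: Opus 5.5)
The plan is to estimate each summand pointwise, using only the size of the kernel and no cancellation, and then sum. First I would record the combinatorics. For fixed $s\geq200$ and $Q\in\Qmc_{j-s}$ there is at most one $J\in\D^{\vec w}$ with $\ell(J)=2^j$ and $Q\subset\frac12J$. So the triple sum is bounded by $\sum_{s\geq200}\sum_{Q\in\Qmc}A_s(Q)$, where $A_s(Q)=\norm{(b-b_Q)T_{J,\infty}h_Q}_1$ for that unique $J=J(Q,s)$. Since $\sum_{Q}\norm{h_Q}_1\leq2\norm f_1$, it suffices to prove
\begin{equation*}
A_s(Q)\lesssim \Bnorm\,\epsilon_s\,\norm{h_Q}_1,
\qquad
\sum_{s\geq200}\epsilon_s\lesssim_{\Omega,\eta}1.
\end{equation*}

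For the bound on $A_s(Q)$, I would use $\supp T_{J,\infty}h_Q\subset J$ from \eqref{eq:localized-support} and Fubini:
\begin{equation*}
A_s(Q)\leq\int_Q\abs{h_Q(y)}\int_J\abs{b(x)-b_Q}\,\abs{\mathcal K_{j,\infty}(x-y)}\,\dd x\,\dd y .
\end{equation*}
Split $b(x)-b_Q=(b(x)-b_J)+(b_J-b_Q)$. Since $Q\subset J$ and $\ell(J)=2^s\ell(Q)$, the standard telescoping estimate gives $\abs{b_J-b_Q}\lesssim s\Bnorm$. Together with $\int\abs{\mathcal K_{j,\infty}}\lesssim\norm{\Omega_\infty}_{L^1(\Sd)}$, this term contributes $s\Bnorm\norm{\Omega\chi_{\{\abs\Omega>2^{\eta s}\}}}_{L^1(\Sd)}\norm{h_Q}_1$.

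For the term with $b-b_J$, fix $y\in Q$ and set $g_y(x)\coloneqq 2^{jd}\abs{\mathcal K_{j,\infty}(x-y)}$ for $x\in J$. Then $g_y\lesssim\abs{\Omega_\infty((x-y)/\abs{x-y})}$ on an annulus of radius $\approx2^j$ contained in $J$. Lemma \ref{lem:bmo-orlicz} gives
\begin{equation*}
\int_J\abs{b-b_J}\abs{\mathcal K_{j,\infty}(\cdot-y)}\lesssim\Bnorm\norm{g_y}_{L\log L,J}.
\end{equation*}
Passing to polar coordinates around $y$ shows that the modular satisfies $m_s\coloneqq\frac1{\abs J}\int_J\Phi(g_y)\lesssim\int_{\Sd}\Phi(C\abs{\Omega_\infty})\,\dd\sigma$, uniformly in $y$.

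The main obstacle is converting the Luxemburg norm into something summable in $s$. A uniform $L\log L$ norm bound would not sum, so I would use convexity instead. Since $\Phi(t/a)\leq\Phi(t)/a$ for $0<a\leq1$, we have $\norm{g}_{L\log L,J}\leq\max(m,m^{\,})$-type control; precisely, $\norm{g}_{L\log L,J}\leq m$ whenever $m\leq1$. Moreover $m_s\to0$ as $s\to\infty$, and for the finitely many $s$ with $m_s>1$ one has the crude bound $\norm{g}_{L\log L,J}\lesssim1+m_s\lesssim_\Omega1$. Hence we may take
\begin{equation*}
\epsilon_s\approx s\int_{\{\abs\Omega>2^{\eta s}\}}\abs\Omega\,\dd\sigma+\int_{\{\abs\Omega>2^{\eta s}\}}\abs\Omega\log(e+\abs\Omega)\,\dd\sigma
\end{equation*}
for large $s$. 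Interchanging the sum and the integral, and noting that $\#\{s:2^{\eta s}<\abs\Omega\}\lesssim\eta^{-1}\log(e+\abs\Omega)$, we get
\begin{equation*}
\sum_s\epsilon_s\lesssim_\eta\int_{\Sd}\abs\Omega\log^2(e+\abs\Omega)\,\dd\sigma<\infty .
\end{equation*}
This is exactly where the $L(\log L)^2$ hypothesis enters, and it concludes the argument.
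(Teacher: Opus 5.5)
Your skeleton is the paper's: a unique $J$ for each pair $(Q,s)$, the split $b-b_Q=(b-b_J)+(b_J-b_Q)$, the bound $\abs{b_J-b_Q}\lesssim s\Bnorm$, Lemma~\ref{lem:bmo-orlicz} on $J$, and interchanging the $s$-sum with the integral over $\Sd$. But the one step where you depart from the paper is false: the passage from the modular $m_s=\frac1{\abs J}\int_J\Phi(g_y)$ to the Luxemburg norm. Convexity and $\Phi(0)=0$ give $\Phi(at)\le a\Phi(t)$ for $0\le a\le1$, i.e.\ $\Phi(t/a)\ge\Phi(t)/a$ for $0<a\le1$. The inequality you quote holds only for $a\ge1$. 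Hence, if $m\le1$ and $\lambda<m$, then $\frac1{\abs J}\int_J\Phi(g/\lambda)\ge m/\lambda>1$. So in fact $m\le\norm{g}_{L\log L,J}\le1$: the modular is a lower bound for the norm, not an upper bound. The norm can be far larger. For $g=A\chi_E$ with $\abs E=\delta\abs J$ and $A$ fixed, one has $m=A\delta\log(e+A)$, while $\norm{g}_{L\log L,J}\approx A\delta\log(1/\delta)$ as $\delta\to0$. This is exactly your situation, since $g_y\approx\abs{\Omega_\infty}$ is carried by $\{\abs\Omega>2^{\eta s}\}$, whose measure may be tiny. So the bound $\norm{g_y}_{L\log L,J}\lesssim m_s$, and with it your choice of $\epsilon_s$, is unjustified.

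The repair is cheap. For every $\mu>0$,
\[
\norm{g}_{L\log L,J}\le\mu\Big(1+\frac1{\abs J}\int_J\Phi(g/\mu)\Big).
\]
To see this, set $M=\frac1{\abs J}\int_J\Phi(g/\mu)$ and $\lambda=\mu(1+M)$; the correct direction of convexity then gives $\frac1{\abs J}\int_J\Phi(g/\lambda)\le M/(1+M)\le1$. Take $\mu=2^{-s}$ and use $\log(e+2^st)\le s+\log(e+t)$. This yields
\[
\norm{g_y}_{L\log L,J}\lesssim 2^{-s}+s\int_{\{\abs\Omega>2^{\eta s}\}}\abs\Omega\,\dd\sigma+\int_{\{\abs\Omega>2^{\eta s}\}}\abs\Omega\log(e+\abs\Omega)\,\dd\sigma .
\]
So $\epsilon_s$ only gains the summable term $2^{-s}$, and your final summation goes through unchanged. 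This is precisely the role of the term $2^{-m}$ in the paper's bound $\norm{\Omega_m(\cdot-y)}_{L\log L,J}\lesssim2^{-m}+m\norm{\Omega_m}_1$. There, $\Omega_\infty$ is split into level pieces $\Omega_m$ with $m\gtrsim\eta s$, and each piece pays a factor $m$ inside the logarithm; summing over $s\lesssim m/\eta$ produces $\sum_m m^2\norm{\Omega_m}_1$. Once corrected, your version reaches the same conclusion without the level-set decomposition.
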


\begin{proof}
	For $m\geq1$, let
	\begin{equation*}
		\Omega_m(\theta)
		\coloneqq \Omega(\theta)
		\chi_{\{2^{m-1}<\abs{\Omega(\theta)}\leq2^m\}},
	\end{equation*}
	and set
	\begin{equation*}
		\mathcal{K}_{j,m}(x)
		\coloneqq  \varphi_j(x)\frac{\Omega_m(x)}{\abs{x}^d}.
	\end{equation*}
	For $Q\in\Qmc_{j-s}$ with $Q\subset\frac12 J$, we have
	\begin{align*}
		&\norm{(b-b_Q)\mathcal{K}_{j,m}\ast h_Q}_1
		\notag\\
		&\quad\leq
		\int_J\int_Q
		\abs{\mathcal{K}_{j,m}(x-y)}
		\abs{h_Q(y)}
		\paren{
			\abs{b(x)-b_J}+\abs{b_J-b_Q}
		}
		\,\dd y\,\dd x
		\notag\\
		&\quad=:I_1(J,Q,m)+I_2(J,Q,m).
	\end{align*}
	
	We first estimate $I_1$. The Orlicz-H\"older inequality and the John-Nirenberg theorem imply that
	\begin{align*}
		I_1(J,Q,m) \lesssim
		\Bnorm\norm{h_Q}_1
		\sup_{y\in Q}
		\norm{\Omega_m(\,\cdot-y)}_{L\log L,J}.
	\end{align*}
	Using polar coordinates and the fact that $y \in Q$, we obtain
	\begin{equation*}
		\begin{aligned}
			& \qquad \left\|\Omega_m(\cdot-y)\right\|_{L\log L,J} \\
			& =
			\inf\left\{
			\lambda>0:
			\frac{1}{|J|}
			\int_J
			\frac{|\Omega_m(x-y)|}{\lambda}
			\log\left(
			e+\frac{|\Omega_m(x-y)|}{\lambda}
			\right)\dd x
			\le 1
			\right\}
			\\
			&\lesssim
			\inf\left\{
			\lambda>0:
			\frac{\|\Omega_m\|_{L^1}}{\lambda}
			\log\left(
			e+\frac{\|\Omega_m\|_{L^\infty}}{\lambda}
			\right)
			\le 1
			\right\}
			\\
			&\lesssim
			\|\Omega_m\|_{L^\infty}^{-1} + \|\Omega_m\|_{L^1}
			\log\left(
			e+\|\Omega_m\|_{L^\infty}
			\right)
			\\
			&\lesssim
			2^{-m}
			+
			m\|\Omega_m\|_{L^1},
			\ m\ge \eta s>0.
		\end{aligned}
	\end{equation*}	
	Hence
	\begin{equation}\label{eq:I1-final-bound}
		I_1(J,Q,m)
		\lesssim
		\paren{2^{-m}+m\norm{\Omega_m}_1}
		\Bnorm\norm{h_Q}_1.
	\end{equation}
	
	On the other hand, $Q\subset\frac12 J$ and $\ell(Q)=2^{-s}\ell(J)$ imply that
	\begin{equation*}
		\abs{b_J-b_Q}\lesssim s\Bnorm.
	\end{equation*}
	Consequently,
	\begin{equation}\label{eq:I2-final-bound}
		I_2(J,Q,m)
		\lesssim
		s\Bnorm\norm{\Omega_m}_1\norm{h_Q}_1.
	\end{equation}
	
	Summing \eqref{eq:I1-final-bound} and \eqref{eq:I2-final-bound} over the relevant indices, and using the fact that $m\geq\eta s$ implies $s\lesssim m$, we conclude that
	\begin{align*}
		&\sum_{s\geq200}
		\sum_{J\in\D^{\vec w}}
		\sum_{\substack{Q\in\Qmc_{j-s}\\Q\subset\frac12 J}}
		\norm{(b-b_Q)T_{J,\infty}h_Q}_{L^1}
		\notag\\
		&\quad\lesssim \Bnorm \|f\|_{L^1}
		\brak{
			1+\sum_{m\geq1}
			m^2\norm{\Omega_m}_{L^1(\Sd)}
		}
		\notag\\
		& \quad \lesssim  C_{\Omega}\Bnorm \|f\|_{L^1}.
	\end{align*}
\end{proof}

Proposition \ref{prop:large-omega-L1} and Chebyshev's inequality give the
required endpoint bound for the large-amplitude part.  We therefore turn to
$\Omega_0$.  The following $L^2$ estimate is the central step in the reduction.

\begin{proposition}
	\label{prop:small-omega-key-L2}
	There exists a constant $\delta>0$ such that, for every $s\geq200$ and each fixed $\vec w\in\{0,\frac12\}^d$,
	\begin{align*}
		&\Bigg\|
		\sup_{l\in\Z} \bigg|
		\sum_{\substack{J\in\D^{\vec w}\\\ell(J)\geq2^l}}
		\sum_{\substack{Q\in\Qmc_{j-s}\\Q\subset\frac12 J}}
		\paren{b-b_Q}T_{J,0}h_Q
		\bigg|
		\Bigg\|_2 \lesssim 
		2^{-\delta s}\Bnorm\norm f_1^{1/2}.
	\end{align*}
\end{proposition}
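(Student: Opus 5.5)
The plan is to follow the outline sketched in the introduction. The steps are: linearize the supremum, bound each linearized scale-pair by an $L^2$ estimate with decay in $s$, and obtain that decay from a smoothing decomposition of $T_{J,0}$.

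\textbf{Linearization.} Fix $s\geq200$ and $\vec w$. For each $J\in\D^{\vec w}$ put
\[
F_J\coloneqq\sum_{Q\in\Qmc_{j-s},\,Q\subset\frac12J}(b-b_Q)T_{J,0}h_Q,
\]
which is supported in $J$ by \eqref{eq:localized-support}. The partial sums over $\ell(J)\geq2^l$, evaluated at a point $x$, only involve the chain of cubes $J\ni x$. This is a one-parameter family indexed by $j$, so we may apply a Rademacher-Menshov argument, as in Lai's dyadic organization. Split the scales into residue classes modulo a large integer $N\approx s$. Then
\[
\Big\|\sup_l\Big|\sum_{\ell(J)\geq 2^l}F_J\Big|\Big\|_2\lesssim \log(2+s)\,\sup_{\epsilon_J\in\{0,\pm1\}}\Big\|\sum_J\epsilon_JF_J\Big\|_2 .
\]
The logarithmic loss is harmless against an exponential gain $2^{-\delta s}$. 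It therefore suffices to prove a signed estimate
\[
\Big\|\sum_J\epsilon_JF_J\Big\|_2^2\lesssim2^{-2\delta' s}\Bnorm^2\|f\|_1,
\]
uniformly in the signs.

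\textbf{Smoothing decomposition.} Write $T_{J,0}$ as a convolution with $\mathcal K_{j,0}$ restricted to $\frac12J$, and replace $\chi_{\frac12J}$ by a smooth cutoff adapted to $J$. The error is supported in a $2^{j-\epsilon s}$ neighbourhood of $\partial(\frac12J)$. That boundary error will be controlled by interpolating two bounds via Proposition \ref{prop:L1-L3-interpolation}:
\begin{itemize}
\item an $L^1$ bound gaining the small measure of the boundary layer, using Corollary \ref{cor:bmo-subset-integral} for the weight $b-b_Q$;
\item an $L^3$ bound losing at most $2^{C\eta s}$, from $|\Omega_0|\leq2^{\eta s}$ and Corollary \ref{cor:bmo-Lp}.
\end{itemize}

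Next, split $\mathcal K_{j,0}=\mathcal K_{j,0}*\psi_{j-\epsilon s}+\mathcal K_{j,0}*(\delta-\psi_{j-\epsilon s})$ with an approximate identity $\psi$.
\begin{itemize}
\item \emph{Low-frequency part.} The smoothed kernel is Lipschitz at scale $2^{j-\epsilon s}$. Since $h_Q$ has mean zero and $\ell(Q)=2^{j-s}$, this gives an $L^1$ gain of $2^{-(1-\epsilon)s}2^{\eta s}$. The BMO factor is again handled by Lemma \ref{lem:bmo-orlicz} together with $|b_J-b_Q|\lesssim s\Bnorm$. Interpolating with a uniform $L^3$ bound then gives decay.
\item \emph{High-frequency part.} Take $u_J=\mathcal K^{\mathrm{hi}}_{j,0}*h_j$ and decompose
\[
(b-b_Q)u=G_\nu[(b-b_Q)u]+[b,G_\nu]u+(b-b_Q)(I-G_\nu)u,
\]
where $G_\nu$ is a microlocal projection in the style of Seeger/Lai. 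Its frequency is localized at $|\xi|\sim2^{-j+\epsilon s}$, in angular sectors of aperture $2^{-\epsilon' s}$ aligned with the directions of the kernel.
\end{itemize}

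\textbf{The three high-frequency terms.}
\begin{itemize}
\item \emph{Principal term $G_\nu[(b-b_Q)u]$.} Use $TT^*$ and almost orthogonality across scales. The oscillatory integral bound on the Fourier transform of a smoothly truncated spherical measure gives decay $2^{-c\epsilon s}$ in the off-diagonal interactions. The diagonal terms reduce to $\sum_Q\|h_Q\|_1\lesssim\|f\|_1$, using $|h_Q|\lesssim$ averages at most about $1$ after the standard $L^\infty$-type reduction for bad functions.
\item \emph{Commutator term $[b,G_\nu]u$.} Use the Coifman--Rochberg--Weiss analytic family $z\mapsto e^{zb}G_\nu e^{-zb}$. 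By Lemma \ref{lem:bmo-exponential-weight} and Theorem \ref{thm:stein-weiss} with $p=2$, this operator is bounded on weighted $L^2$ for $|z|\Bnorm$ small. Cauchy's formula then bounds $[b,G_\nu]$ by $\Bnorm$ times the decay available for $G_\nu$ acting on $u$.
\item \emph{Remainder $(b-b_Q)(I-G_\nu)u$.} The kernel of $(I-G_\nu)$ composed with the high-frequency piece is small in $L^1$, by a non-stationary phase gain. We interpolate this with an $L^3$ bound once more.
\end{itemize}
We choose $\epsilon$, $\epsilon'$ and then $\eta\ll\epsilon$ so that every gain beats the $2^{C\eta s}$ losses.

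\textbf{Main obstacle.} I expect the principal microlocal term to be the hardest. The weight $b(x)-b_Q$ depends on $Q$ rather than $J$, so it destroys the exact convolution structure needed for $TT^*$. My first attempt would be to rewrite $b-b_Q=(b-b_J)+(b_J-b_Q)$. The constant part $b_J-b_Q$ costs a factor $s$ and is otherwise harmless. The part $b-b_J$ is absorbed via the commutator identity, so that only $G_\nu$ applied to unweighted pieces enters the almost-orthogonality computation.
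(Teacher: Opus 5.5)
\textbf{The linearization step fails as stated.} Lemma~\ref{lem:rademacher-menshov-sec3} bounds the maximal partial sums of $N$ functions at a cost of $\log(2+N)$. That loss cannot be removed in general, by Menshov's example, so you need a bound on the number of terms. Here the scales $j$ range over all of $\Z$. Nothing bounds, for a fixed $x$, the number of scales at which $x$ lies in some $J\in\mathcal H_s$: Lemma~\ref{lem:relevant-cubes-carleson} controls that count only on average, as a Carleson packing by $2^{ds}$, not pointwise. Splitting the scales into residue classes modulo $N\approx s$ still leaves infinitely many scales in each class. Your inequality with the factor $\log(2+s)$ is therefore unjustified.

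The missing idea is Lai's stratification.
\begin{enumerate}[(i)]
\item Pass to the children $J^\iota$. This is needed anyway, since $\D^{\vec w}$ is not nested across scales.
\item Build the sets $F^n_{s,\iota}$ with threshold $C_02^{ds}$, and partition $\mathcal H_s$ into layers $\mathcal I^{\#,n}_{s,\iota}$. Inside a layer, each $J^\iota$ has at most $u_0=C_02^{ds}$ ancestors.
\item Re-index by generations $\mathcal M_u$. The supremum becomes a maximal partial sum of $u_0$ functions, so Rademacher--Menshov costs only $\log(2+u_0)\lesssim s$.
\item Lemmas~\ref{lem:F-measure-decay} and~\ref{lem:layered-bad-mass} supply the factor $2^{-n}$ that makes the layers summable.
\end{enumerate}

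This also changes what has to be smoothed. The generation structure introduces the sharp output cutoffs $\chi_{J^\iota}$. It is these that the paper replaces by $P_{j-s\kappa}\chi_{J^\iota}$, which produces the boundary term of Lemma~\ref{lem:sec42-boundary-error}. The input cutoff $\chi_{\frac12J}$ that you propose to smooth needs no treatment, because $T_{J,0}h_Q=T_{j,0}h_Q$ for $Q\subset\frac12J$.

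\textbf{Your plan for the principal term is also not workable.} You write $b-b_Q=(b-b_J)+(b_J-b_Q)$ and use $G_\nu[(b-b_J)u]=(b-b_J)G_\nu u-[b,G_\nu]u$. This only moves the $J$-dependent weight outside $G_\nu$. The sum $\sum_\nu\sum_J(b-b_J)G_\nu u_J$ still has to be estimated. Combined with the commutator and remainder terms, your three-term decomposition simply collapses back to $(b-b_Q)u$.

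The paper instead keeps the weight inside. It first extracts $G_\nu$ through the square-function bound $2^{s\gamma(d-2)}$, then runs $TT^*$ directly on $\sum_{J,Q}(b-b_Q)u_{J,Q,\nu}$. The gain comes from the thin tubes $E_{j,\nu}$, not from Fourier decay of the kernel. The BMO weight is absorbed through the auxiliary function $S^\nu_J(\cdot,y)$:
\begin{enumerate}[(i)]
\item it satisfies an $L^1(16J)$ bound $\lesssim s2^{jd-2s\gamma(d-1)}$ and a uniform $L^2(\log L)^{-2}$ bound;
\item Lemma~\ref{lem:orlicz-local-interpolation} turns these into an $L^q$ gain of $2^{-2\epsilon s\gamma(d-1)}$ with $\epsilon$ close to $1$;
\item after H\"older against $b-b_Q\in L^{q'}$, this beats the loss $2^{s\gamma(2d-3)}$ from counting directions.
\end{enumerate}

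For the commutator term, a per-direction Cauchy-formula bound is too weak. You need the square-function bound $\sum_\nu\|[b,G_\nu]u\|_2^2\lesssim2^{s\gamma(d-3/2)}\Bnorm^2\|u\|_2^2$. Its exponent lies strictly below $d-1$, the exponent in the count of directions, and the Stein--Weiss interpolation is what produces it.
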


Once Proposition \ref{prop:small-omega-key-L2} is established, summation in
$s$, followed by Chebyshev's inequality, controls the small-amplitude
contribution to \eqref{eq:fixed-shifted-grid-goal}.  It therefore remains to
prove this proposition.  Fix $s\geq200$ for the rest of the section.  To
simplify notation, write $\Omega$, $\mathcal K_j$, and $T_J$ for $\Omega_0$,
$\mathcal K_{j,0}$, and $T_{J,0}$, respectively.  Then
\begin{equation*}
	\norm{\Omega}_{\infty} \leq 2^{\eta s}.
\end{equation*}

\noindent \textbf{Some lemmas and stratification of dyadic cubes.} We first recall two standard tools for the frequency decomposition and the
linearization of the maximal operator.  The subsequent stratification of
dyadic cubes follows Lai \cite{Lai2025}.

\begin{lemma}[Weighted Mikhlin multiplier theorem]
	\label{lem:hormander-mihlin-weighted-sec3}
	Let \(m\) be a bounded complex-valued function on
	\(\mathbb{R}^d\setminus\{0\}\). Suppose that, for every multi-index $\alpha$ with $\abs\alpha \leq d$,
	\[
	|\partial_\xi^\alpha m(\xi)|
	\le A |\xi|^{-|\alpha|}.
	\]
	Let \(T_m\) be the Fourier multiplier operator defined by
	\[
	\widehat{T_m f}(\xi)
	=m(\xi)\widehat f(\xi).
	\]
	Then, for every \(w\in A_p(\mathbb{R}^d)\), $1 < p <\infty$,
	\[
	\|T_m f\|_{L^p(w)}
	\lesssim_{[w]_{A_p}} \bigl(\|m\|_{L^\infty}+A\bigr)
	\|f\|_{L^p(w)}.
	\]
\end{lemma}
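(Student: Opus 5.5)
The statement is the classical weighted Hörmander--Mikhlin theorem. I would prove it by showing that $T_m$ is a Calderón--Zygmund operator with a kernel satisfying a smoothness condition adapted to $d$ derivatives, and then invoking weighted Calderón--Zygmund theory (Coifman--Fefferman).

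\textbf{Step 1 ($L^2$ bound).} By Plancherel, $\|T_mf\|_2\le\|m\|_{L^\infty}\|f\|_2$.

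\textbf{Step 2 (kernel regularity).} Take a Littlewood--Paley partition $\sum_k\psi(2^{-k}\xi)=1$ and set $m_k=m\,\psi(2^{-k}\cdot)$, with kernel $K_k=\widecheck{m_k}$. The hypothesis $|\partial^\alpha m|\le A|\xi|^{-|\alpha|}$ for $|\alpha|\le d$ gives
\[
\int|x|^{2\beta}|K_k(x)|^2\,dx\lesssim A^2\,2^{kd}2^{-2k\beta},\qquad \beta\le d .
\]
Using Cauchy--Schwarz on $|x|>2^{-k}$ with $\beta=d$ and on $|x|\le2^{-k}$ with $\beta=0$, and doing the same for $\nabla K_k$, one gets
\[
\int_{|x|>2|y|}|K_k(x-y)-K_k(x)|\,dx\lesssim A\min\{2^k|y|,(2^k|y|)^{-1/2}\}.
\]
Summing over $k$ gives the Hörmander condition $\int_{|x|>2|y|}|K(x-y)-K(x)|\,dx\lesssim A$. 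This yields weak $(1,1)$ and $L^p$ boundedness, but not yet the weighted bound.

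\textbf{Step 3 (weighted bound).} This is the main obstacle. Weighted $A_p$ bounds for all $1<p<\infty$ do not follow from the plain $L^1$-Hörmander condition; one needs an $L^r$-Hörmander condition for every $r<\infty$. With $d$ derivatives (more than $d/2$), Kurtz--Wheeden show that the kernel satisfies the $L^{r}$-Hörmander condition
\[
\sum_j 2^{jd(1-1/r)}\,\big\|(K(\cdot-y)-K)\chi_{2^j|y|<|x|\le 2^{j+1}|y|}\big\|_{L^{r}}\lesssim A
\]
for $r\le 2$, and, because $d$ derivatives are available rather than just $d/2+\epsilon$, also for larger $r$. To obtain this I would run the same dyadic computation as in Step 2 in $L^{r}$ in place of $L^2$, using Hausdorff--Young for $r\ge2$ and interpolating the estimates for $\beta=0$ and $\beta=d$.

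The $L^r$-Hörmander condition then gives the sharp-function estimate
\[
M^\#(T_mf)\lesssim M_{r'}f ,
\]
in the style of Kurtz--Wheeden. For a given $w\in A_p$, choose $r'<p$ such that $w\in A_{p/r'}$, which is possible by the openness of $A_p$. The Fefferman--Stein inequality together with the weighted boundedness of $M_{r'}$ on $L^p(w)$ then gives
\[
\|T_mf\|_{L^p(w)}\lesssim_{[w]_{A_p}}(\|m\|_\infty+A)\|f\|_{L^p(w)} .
\]

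If the $L^r$ step for large $r$ fails to close, the fallback is to cite Kurtz--Wheeden directly. That result is exactly the statement here, since requiring derivatives up to order $d$ exceeds their hypothesis of order $>d/2$, and the paper itself treats this lemma as a standard tool.
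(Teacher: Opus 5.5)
The paper gives no proof of this lemma. It is used as a standard tool, and it is the case $\ell=d$ of the Kurtz--Wheeden theorem, where a Hörmander condition of order $\ell$ gives $L^p(w)$ bounds for $w\in A_{p\ell/d}$. So there is no argument in the paper to compare with, and your sketch is the standard proof and is sound.

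You identify the essential mechanism correctly. For $2\le r<\infty$, Hausdorff--Young and the pointwise bounds give
$\||x|^dK_k\|_{L^r}\lesssim A\,2^{-kd}2^{kd/r'}$ and $\||x|^d\nabla K_k\|_{L^r}\lesssim A\,2^{k}2^{-kd}2^{kd/r'}$.
Since $d/r'<d$, the tail $\sum_{2^kR\ge 1}(2^kR)^{-d/r}$ converges. This yields the $L^r$-Hörmander condition for every finite $r$, but not for $r=\infty$, since exactly $d$ derivatives do not give a standard Calder\'on--Zygmund kernel. That is why the estimate $M^\#(T_mf)\lesssim M_{r'}f$ must be paired with the openness of $A_p$, as you do.

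A few points to tighten.
\begin{enumerate}[(i)]
\item As written, your $L^r$-Hörmander condition is not scale invariant. The factor should be $(2^j|y|)^{d(1-1/r)}$, not $2^{jd(1-1/r)}$; your dyadic computation produces the correct version.
\item In the sharp-function estimate, the local piece $T_m(f\chi_{2Q})$ needs $T_m$ bounded on $L^{r'}$, or weak $(1,1)$ plus Kolmogorov. Step~2 supplies this.
\item The Fefferman--Stein inequality requires an a priori finiteness hypothesis. This is routine: take $f\in C_c^\infty$ and replace $w$ by $\min\{w,N\}$. That weight lies in $A_p$ and in $A_{p/r'}$ with constants at most $2^{p-1}$ times those of $w$. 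Then let $N\to\infty$.
\item The paper applies this lemma uniformly over the exponential weights $w_z$ and $w_z^{1+a_0}$, so the implicit constant must depend only on $d$, $p$ and $[w]_{A_p}$. Your argument gives this if you use the quantitative form of openness: reverse Hölder gives $w\in A_{p-\epsilon}$ with $\epsilon$ and $[w]_{A_{p-\epsilon}}$ controlled by $[w]_{A_p}$. Then $r$, the $L^r$-Hörmander constant, the weighted bound for $M$ and the Fefferman--Stein constant are all controlled by $[w]_{A_p}$, and the dependence on $\|m\|_{L^\infty}+A$ stays linear.
\end{enumerate}
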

For $w=1$, Lemma \ref{lem:hormander-mihlin-weighted-sec3} gives the usual
unweighted multiplier theorem.

\begin{lemma}[Rademacher-Menshov theorem]
	\label{lem:rademacher-menshov-sec3}
	Let $(X,\mu)$ be a measure space and let $f_1,\ldots,f_N$ be measurable functions on $X$. Suppose that, for every choice of $\eps_j\in\{-1,1\}$,
	\begin{equation*}
		\bigg\| \sum_{j=1}^N\eps_jf_j \bigg\|_{L^2(X)}\leq B.
	\end{equation*}
	Then
	\begin{equation*}
		\bigg\|
		\sup_{1\leq M\leq N}
		\Big|\sum_{j=1}^Mf_j \Big| \bigg\|_{L^2(X)}
		\lesssim B\log(2+N).
	\end{equation*}
\end{lemma}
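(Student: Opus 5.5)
The plan is the classical dyadic-block argument. It uses the hypothesis only through one consequence: a square-function bound for sums over any partition of $\{1,\dots,N\}$ into disjoint intervals.

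\textbf{Step 1: block square-function bound.} For an interval $I\subset\{1,\dots,N\}$ write $F_I\coloneqq\sum_{j\in I}f_j$. Let $I_1,\dots,I_m$ be disjoint intervals, and let $r_1,\dots,r_m$ be independent Rademacher signs. Choose $\eps_j=r_k$ for $j\in I_k$, and $\eps_j=1$ for $j$ outside $\bigcup_k I_k$. Since $\sum_k r_kF_{I_k}=\frac12\bigl(\sum_j\eps_jf_j+\sum_j\eps'_jf_j\bigr)$, where $\eps'$ agrees with $\eps$ on $\bigcup_k I_k$ and equals $-1$ off it, the triangle inequality gives $\bigl\|\sum_k r_kF_{I_k}\bigr\|_{L^2(X)}\leq B$ for every sign choice $(r_k)$. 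Squaring, averaging over $(r_k)$, and using orthogonality of Rademacher functions together with Fubini, we get
\begin{equation*}
\sum_{k=1}^m\norm{F_{I_k}}_{L^2(X)}^2
=\mathbb E\,\Big\|\sum_{k=1}^m r_kF_{I_k}\Big\|_{L^2(X)}^2\leq B^2 .
\end{equation*}

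\textbf{Step 2: dyadic decomposition of partial sums.} Pad with $f_j=0$ for $N<j\leq 2^K$, where $2^{K-1}<N+1\leq 2^K$, so $K\lesssim\log(2+N)$. Padding does not change the hypothesis. For $0\leq k\leq K$, let $\mathcal I_k$ be the dyadic intervals of length $2^{k}$ in $\{1,\dots,2^K\}$, i.e.\ $\{(i-1)2^k+1,\dots,i2^k\}$. Each initial segment $\{1,\dots,M\}$ is a disjoint union of at most one interval from each $\mathcal I_k$; this comes from the binary expansion of $M$. Hence, pointwise,
\begin{equation*}
\sup_{1\leq M\leq N}\Big|\sum_{j=1}^Mf_j\Big|
\leq\sum_{k=0}^K\max_{I\in\mathcal I_k}\abs{F_I}
\leq\sum_{k=0}^K\Big(\sum_{I\in\mathcal I_k}\abs{F_I}^2\Big)^{1/2}.
\end{equation*}

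\textbf{Step 3: conclusion.} Take $L^2(X)$ norms and apply Minkowski's inequality in $k$. For each $k$, apply Step 1 to the partition $\mathcal I_k$, which gives $\bigl\|(\sum_{I\in\mathcal I_k}\abs{F_I}^2)^{1/2}\bigr\|_{L^2}\leq B$. Summing the $K+1$ levels yields the bound $(K+1)B\lesssim B\log(2+N)$.

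The only point that needs care is Step 1. The hypothesis controls only full signed sums over all indices, so we must pass from it to interval pieces and to block-constant random signs. The averaging identity above handles this.
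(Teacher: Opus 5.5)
Your proof is correct and complete. The sign trick in Step 1 legitimately turns the hypothesis into the block square-function bound $\sum_k\|F_{I_k}\|_{L^2}^2\le B^2$. The interchange of the finite Rademacher average with the integral needs no integrability assumption, because the integrands are nonnegative. The binary-expansion covering of each initial segment by at most one dyadic block per level is right. Minkowski's inequality over the $K+1\le 2+\log_2(N+1)$ levels then gives the stated $B\log(2+N)$ bound.

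The paper gives no proof of this lemma. It records the result as a standard tool and uses it as a black box in the linearization step, following Lai. There is therefore no alternative route to compare against. Your dyadic-block argument is the classical proof of this signed form of the Rademacher--Menshov inequality.

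One small remark: each family $\mathcal I_k$ already partitions the padded index set $\{1,\dots,2^K\}$. So the auxiliary sign vector $\varepsilon'$ in Step 1 is not actually needed where you apply it, although it does no harm.
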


For fixed $s$ and $\vec w$, let the collection of relevant localization cubes be
\begin{equation*}
	\mathcal H_s
	\coloneqq \set{
		J\in\D^{\vec w}:
		\text{for some }Q\in\Qmc_{j-s}, Q\subset\tfrac12 J
	}.
\end{equation*}
Lai's estimate \cite[Lemma 3.3]{Lai2025} gives the following Carleson-type
bounds for this collection.
\begin{lemma}[\cite{Lai2025}]
	\label{lem:relevant-cubes-carleson}
	For every measurable set $A\subset\Rd$,
	\begin{equation*}
		\sum_{\substack{J\in\mathcal H_s\\J\subset A}}
		\abs J \lesssim 2^{ds} \abs A.
	\end{equation*}
	Moreover,
	\begin{equation*}
		\sum_{J\in\mathcal H_s}\abs J \lesssim 2^{ds}\|f\|_{L^1}.
	\end{equation*}
\end{lemma}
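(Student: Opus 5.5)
The plan is to prove both bounds with an injective charging argument: each $J\in\mathcal H_s$ is assigned one bad cube $Q(J)\in\Qmc_{j-s}$, and then the disjointness of the Calder\'on-Zygmund cubes does the rest.

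For each $J\in\mathcal H_s$ with $\ell(J)=2^j$, the definition of $\mathcal H_s$ lets us choose $Q(J)\in\Qmc_{j-s}$ with $Q(J)\subset\frac12 J$. Since $\ell(Q(J))=2^{-s}\ell(J)$,
\begin{equation*}
	\abs J=2^{ds}\abs{Q(J)}.
\end{equation*}

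The key step is that $J\mapsto Q(J)$ is injective on $\mathcal H_s$. Suppose $Q(J)=Q(J')$. Then $\ell(J)=2^s\ell(Q(J))=\ell(J')$, so $J$ and $J'$ lie in $\D^{\vec w}$ and have the same side length. Cubes of $\D^{\vec w}$ with a common side length are either equal or disjoint. Both $J$ and $J'$ contain the nonempty cube $Q(J)$, so $J=J'$.

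The cubes in $\Qmc$ are maximal dyadic cubes of the standard grid $\D$, hence pairwise disjoint. Now fix a measurable set $A$. If $J\in\mathcal H_s$ and $J\subset A$, then $Q(J)\subset J\subset A$. Injectivity and disjointness therefore give
\begin{equation*}
	\sum_{\substack{J\in\mathcal H_s\\J\subset A}}\abs J
	=2^{ds}\sum_{\substack{J\in\mathcal H_s\\J\subset A}}\abs{Q(J)}
	\leq 2^{ds}\Big|\bigcup_{\substack{J\in\mathcal H_s\\J\subset A}}Q(J)\Big|
	\leq 2^{ds}\abs A.
\end{equation*}

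For the second bound we drop the restriction $J\subset A$ and run the same computation. This gives
\begin{equation*}
	\sum_{J\in\mathcal H_s}\abs J\leq 2^{ds}\sum_{Q\in\Qmc}\abs Q.
\end{equation*}
The Calder\'on-Zygmund decomposition at height $1$ satisfies $\abs Q<\int_Q\abs f$ for every $Q\in\Qmc$. Summing over the disjoint cubes yields $\sum_{Q\in\Qmc}\abs Q\leq\norm f_1$.

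The only point needing care is the injectivity step. It uses the fact that every relevant $J$ lies in the single shifted grid $\D^{\vec w}$, which is fixed. Injectivity holds even though $Q(J)$ lies in the standard grid $\D$ rather than in $\D^{\vec w}$. No other obstacle is expected.
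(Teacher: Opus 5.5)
Your proof is correct. The paper does not prove this lemma itself; it only cites Lai's Lemma~3.3, so there is no in-paper argument to compare against, and your injective charging argument is a complete, self-contained proof in the present setting.

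The two facts you isolate are exactly what is needed. First, for each fixed $j$ the cubes of the single fixed grid $\D^{\vec w}$ with side length $2^j$ tile $\Rd$, so the chosen bad cube $Q(J)\in\Qmc_{j-s}$ with $Q(J)\subset\frac12 J$ determines $J$. Second, the cubes of $\Qmc$ are pairwise disjoint and satisfy $\abs{Q}<\int_Q\abs{f}$. Together these give both inequalities with constant $1$, the first for arbitrary measurable $A$ with no structural assumption on $A$.

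Compared with the citation, your version checks the statement directly in this paper's conventions: shifted grids $\D^{\vec w}$, the half-cubes $\frac12 J$, and a Calder\'on-Zygmund decomposition taken in the standard grid $\D$. So one need not verify that these match the setup in Lai's paper. The citation only buys brevity.
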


The shifted grid $\D^{\vec w}$ need not be nested across scales.  We therefore
bisect each $J\in\D^{\vec w}$ in every coordinate direction and enumerate the
resulting $2^d$ subcubes, in a fixed spatial order, as
\begin{equation*}
	J^1,J^2,\ldots,J^{2^d}.
\end{equation*}
For each fixed $1\leq\iota\leq2^d$, any two cubes of the form $J^\iota$ are either disjoint or nested. Moreover, if $I^\iota\subset J^\iota$, then $I\subset J$.

Fix $\iota$. Set $F_{s,\iota}^0 \coloneqq \Rd$ and define recursively
\begin{equation*}
	F_{s,\iota}^n
	\coloneqq \Bigg\{
	x\in\Rd:
	\sum_{\substack{J\in\mathcal H_s\\J^\iota\subset F_{s,\iota}^{n-1}}}
	\chi_{J^\iota}(x)>C_0 2^{ds}
	\Bigg\},
	\quad n\geq1,
\end{equation*}
where $C_0$ is a sufficiently large constant depending only on $d$. By construction,
\begin{equation*}
	F_{s,\iota}^1\supset F_{s,\iota}^2
	\supset\cdots\supset F_{s,\iota}^n\supset\cdots.
\end{equation*}

The sets $F_{s,\iota}^n$ satisfy the following decay estimate.
\begin{lemma}[\cite{Lai2025}]
	\label{lem:F-measure-decay}
	For every $n\geq1$,
	\begin{equation*}
		\abs{F_{s,\iota}^n}
		\lesssim 2^{-2n}\|f\|_{L^1}.
	\end{equation*}
\end{lemma}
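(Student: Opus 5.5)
The plan is an induction on $n$. At each step we use the Carleson-type bound of Lemma \ref{lem:relevant-cubes-carleson}, restricted to cubes of a single index $\iota$, together with Chebyshev's inequality. Write $A=F_{s,\iota}^{n-1}$ and
\[
N_{n}(x)\coloneqq\sum_{\substack{J\in\mathcal H_s\\ J^\iota\subset A}}\chi_{J^\iota}(x),
\]
so that $F^n_{s,\iota}=\{N_n>C_02^{ds}\}$.

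For $n=1$ we have $A=\Rd$. Chebyshev's inequality and the second bound of Lemma \ref{lem:relevant-cubes-carleson} give
\[
\abs{F^1_{s,\iota}}\leq \frac{1}{C_02^{ds}}\sum_{J\in\mathcal H_s}\abs{J^\iota}\leq \frac{C}{C_0}\|f\|_{L^1}.
\]
This is acceptable once $C_0\geq 4C$.

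For $n\geq 2$ we want a contraction $\abs{F^n_{s,\iota}}\leq \tfrac14\abs{F^{n-1}_{s,\iota}}$. Iterating it gives $\abs{F^n_{s,\iota}}\lesssim 4^{-n}\|f\|_{L^1}=2^{-2n}\|f\|_{L^1}$.

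The naive Chebyshev estimate runs as follows. If $J^\iota\subset A$, then $J$ is contained in a dilate of $J^\iota$ which need not lie in $A$, so the first bound of Lemma \ref{lem:relevant-cubes-carleson} cannot be applied to $A$ directly. Instead I would apply it inside maximal cubes. The cubes $\{J^\iota\}$ for fixed $\iota$ form a nested family, so the set $\bigcup\{J^\iota: J\in\mathcal H_s,\ J^\iota\subset A\}$ decomposes into maximal disjoint cubes $P$ with $P\subset A$. Each $P=J_P^\iota$, and every $J$ with $J^\iota\subset P$ satisfies $J\subset J_P$ by the nesting property stated before the definition of $F^n$. Lemma \ref{lem:relevant-cubes-carleson} with the set $J_P$ then gives
\[
\sum_{J^\iota\subset P}\abs{J^\iota}\lesssim 2^{ds}\abs{J_P}\approx 2^{ds}\abs{P}.
\]
Summing over $P$ and using disjointness yields $\int N_n\lesssim 2^{ds}\abs A$. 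Chebyshev then gives $\abs{F^n_{s,\iota}}\leq (C/C_0)\abs{F^{n-1}_{s,\iota}}$, and choosing $C_0\geq4C$ gives the factor $\tfrac14$.

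The main obstacle is this localization step, namely ensuring that the Carleson constant is uniform and independent of $s$ except for the factor $2^{ds}$, which the threshold $C_02^{ds}$ exactly absorbs. The nesting property of the $J^\iota$ and the containment $I^\iota\subset J^\iota\Rightarrow I\subset J$ are what make the maximal-cube argument work. If the Carleson bound of \cite[Lemma 3.3]{Lai2025} is only available in the form stated for arbitrary measurable $A$, I would apply it directly with $A=J_P$ as above, which is legitimate because $J_P$ is a measurable set.
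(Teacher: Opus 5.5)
Your argument is correct. The paper gives no proof of its own here and simply imports the bound from Lai. Your iterated Chebyshev estimate with the Carleson packing bound, contracting by a factor $\tfrac14$ at each step once $C_0$ is large, is the natural argument behind that citation.

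Your localization to maximal cubes $P=J_P^\iota\subset F^{n-1}_{s,\iota}$, combined with the implication $I^\iota\subset J^\iota\Rightarrow I\subset J$, is exactly what converts the condition $J^\iota\subset F^{n-1}_{s,\iota}$ into the form $J\subset A$ in which the Carleson bound is stated. One point you left implicit is that these maximal cubes exist. They do, because $|F^{n-1}_{s,\iota}|<\infty$ by the previous step, which bounds the side lengths of the $J^\iota$.
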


We now use $F_{s,\iota}^n$ to stratify $\mathcal H_s$. Define
\begin{align*}
	\mathcal I_{s,\iota}^{\#,1}
	& \coloneqq \set{
		J\in\mathcal H_s:J^\iota\not\subset F_{s,\iota}^1
	},\\
	\mathcal I_{s,\iota}^{\#,n}
	& \coloneqq \set{
		J\in\mathcal H_s:
		J^\iota\subset F_{s,\iota}^{n-1},\ 
		J^\iota\not\subset F_{s,\iota}^{n}
	},
	\quad n\geq2.
\end{align*}
For each fixed $\iota$, these collections form a partition of $\mathcal H_s$.

Summing also over the $2^d$ children of each cube gives the identity
\begin{align*}
	\qquad \sum_{J\in\mathcal H_s}
	\sum_{\substack{Q\in\Qmc_{j-s}\\Q\subset\frac12 J}}
	\paren{b-b_Q}T_Jh_Q =
	\sum_{\iota=1}^{2^d}
	\sum_{n\geq1}
	\sum_{J\in\mathcal I_{s,\iota}^{\#,n}}
	\sum_{\substack{Q\in\Qmc_{j-s}\\Q\subset\frac12 J}}
	\paren{b-b_Q}T_Jh_Q\chi_{J^\iota}.
	\label{eq:cube-stratification-identity}
\end{align*}
Consequently, Proposition \ref{prop:small-omega-key-L2} follows from the following estimate on each stratum.
\begin{proposition}
	\label{prop:fixed-layer-maximal}
	There exists $\delta>0$ such that, for every $n\geq1$ and $1\leq\iota\leq2^d$,
	\begin{align*}
		&\Bigg\|
		\sup_{l\in\Z}
		\bigg| 
		\sum_{\substack{J\in\mathcal I_{s,\iota}^{\#,n}\\\ell(J)\geq2^l}}
		\sum_{\substack{Q\in\Qmc_{j-s}\\Q\subset\frac12 J}}
		\paren{b-b_Q}T_Jh_Q\chi_{J^\iota}
		\bigg|
		\Bigg\|_2
		\notag\\
		&\quad\lesssim 2^{-n} 2^{-\delta s}
		\Bnorm\norm f_1^{1/2}.
	\end{align*}
\end{proposition}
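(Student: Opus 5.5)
The plan follows Lai's scheme for $\Tmax$. Rademacher--Menshov handles the supremum over $l$, and the layer structure of $\mathcal I_{s,\iota}^{\#,n}$ supplies both the factor $2^{-n}$ and the finiteness of the chains. Fix $n$ and $\iota$, and write $F^k=F_{s,\iota}^k$. For $J\in\mathcal I_{s,\iota}^{\#,n}$ the summand $\sum_{Q}(b-b_Q)T_Jh_Q\chi_{J^\iota}$ is supported in $J^\iota\subset F^{n-1}$. Since the cubes $J^\iota$ are nested or disjoint, for each $x$ the cubes $J\in\mathcal I_{s,\iota}^{\#,n}$ with $x\in J^\iota$ form a chain ordered by scale. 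By construction of $F^{n}$ (these $J^\iota$ are not contained in $F^n$), this chain has length at most $C_02^{ds}$ off $F^n$, and at most $\lesssim 2^{ds}$ in general after the usual truncation. I would therefore partition the scales $j$ into pieces $G_1,\dots,G_N$ along the chain. Then $\sup_l$ becomes a supremum of partial sums $\sum_{i\le M}f_i$, where $f_i$ collects the contributions of the $J$ at the $i$-th position of the chain through $x$; this is Lai's device of ordering by position in the chain rather than by absolute scale. This gives $N\lesssim 2^{ds}$, so Lemma \ref{lem:rademacher-menshov-sec3} costs only a factor $\log(2+N)\lesssim s$. That factor is harmless against $2^{-\delta s}$ after shrinking $\delta$.

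It then suffices to prove the signed estimate
\begin{equation*}
\bigg\|\sum_{J\in\mathcal I_{s,\iota}^{\#,n}}\eps_J\sum_{\substack{Q\in\Qmc_{j-s}\\Q\subset\frac12J}}(b-b_Q)T_Jh_Q\chi_{J^\iota}\bigg\|_2\lesssim 2^{-n}2^{-2\delta s}\Bnorm\norm f_1^{1/2}
\end{equation*}
uniformly in signs $\eps_J\in\{-1,1\}$. The factor $2^{-n}$ comes from the support. Everything lives on $F^{n-1}$, and by Lemma \ref{lem:relevant-cubes-carleson} together with Lemma \ref{lem:F-measure-decay}, $\sum_{J\in\mathcal I^{\#,n}}\abs J\lesssim 2^{ds}\abs{F^{n-1}}\lesssim 2^{ds}2^{-2n}\norm f_1$. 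So the target is an $L^2$ bound of the form
\begin{equation*}
\bigl(2^{-cs}\,2^{-ds}\textstyle\sum_{J\in\mathcal I^{\#,n}}\abs J\bigr)^{1/2}\Bnorm.
\end{equation*}
Here I use that the $h_Q$ have $\norm{h_Q}_1\lesssim\abs Q$, so each $J$ carries bad mass $\lesssim 2^{-ds}\abs J$ up to the normalisation coming from the Calder\'on--Zygmund cubes. The almost-orthogonality of distinct $J$ of the same scale is automatic since the supports are disjoint. Across scales I would proceed as in the nonmaximal Hu--Tao argument.

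The decay is produced by the scheme announced in the introduction. First I replace $\varphi_j\chi_{\frac12J}$ and $\chi_{J^\iota}$ by smooth cutoffs at scale $2^{j-\beta s}$. The boundary errors live on sets of relative measure $2^{-\beta s}$. They are bounded in $L^1$ by Corollary \ref{cor:bmo-subset-integral}, with a factor $s\,2^{-\beta s}2^{\eta s}$, and in $L^3$ by Corollary \ref{cor:bmo-Lp} and $\norm\Omega_\infty\le2^{\eta s}$. Proposition \ref{prop:L1-L3-interpolation} then yields $2^{-(\beta/4-\eta)s}$ up to logarithmic factors. Next, a Littlewood--Paley split of $h_Q$ at frequency $2^{-j+\gamma s}$ handles the low frequencies: the cancellation of $h_Q$ at scale $2^{j-s}$ gives $2^{-(1-\gamma)s}$ in $L^1$, interpolated again against $L^3$. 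For the high frequencies, an angular decomposition of the frequency space into caps of aperture $2^{-\gamma' s}$ with projections $G_\nu$ gives the three-term identity from the introduction. The term $G_\nu[(b-b_Q)u]$ is treated by $TT^*$ using the decay of $\widehat{\Omega\,d\sigma}$ off the cap, which gains $2^{-cs}$ at the price of $\norm\Omega_\infty^2\le2^{2\eta s}$. The term $(b-b_Q)(I-G_\nu)u$ is handled by localized $L^1$/$L^3$ interpolation together with Lemma \ref{lem:orlicz-local-interpolation}.

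The main obstacle is the commutator term $[b,G_\nu]u$. My approach there is to write $[b,G_\nu]=\frac{d}{dz}\big|_{z=0}e^{zb}G_\nu e^{-zb}$ and use Cauchy's formula on a circle $\abs z=c/\Bnorm$. Lemma \ref{lem:bmo-exponential-weight} with Lemma \ref{lem:hormander-mihlin-weighted-sec3} gives a uniform weighted $L^2$ bound for $e^{zb}G_\nu e^{-zb}$, with no decay. The unweighted bound carries the microlocal decay. Stein--Weiss interpolation (Theorem \ref{thm:stein-weiss}, Remark \ref{rem:stein-weiss-p2}) between the two transfers a fraction of the decay to a slightly smaller circle. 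Balancing the parameters $\eta<\gamma'\ll\gamma<\beta$ so that every term decays like $2^{-2\delta s}$ closes the estimate. The delicate point is making the $G_\nu$ symbols satisfy the Mikhlin bounds uniformly in $\nu$ with at most a $2^{C\gamma' s}$ loss.
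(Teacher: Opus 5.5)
Your outer reduction is the paper's. You order the cubes of $\mathcal I^{\#,n}_{s,\iota}$ by generation, meaning depth in the chain counted from the top, which is what turns $\sum_{\ell(J)\ge 2^l}$ into a partial sum. You bound the number of generations by $u_0=C_02^{ds}$, pay $\log(2+u_0)\lesssim s$ in Rademacher--Menshov, and prove a signed estimate. Two small corrections there:
\begin{itemize}
\item No truncation is needed. The smallest cube of the chain through $x$ is not contained in $F^n_{s,\iota}$, so all cubes of the chain share a point outside $F^n_{s,\iota}$. There are therefore at most $C_02^{ds}$ of them.
\item A single $J$ can carry bad mass comparable to $|J|$, not $2^{-ds}|J|$. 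The correct input is Lemma \ref{lem:layered-bad-mass}.
\end{itemize}

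The genuine gap is in the signed estimate, where you misidentify the source of decay for two of the three microlocal terms.

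For $G_\nu[(b-b_Q)u]$, the support of $m_\nu$ is the band $|\langle e_\nu,\xi/|\xi|\rangle|\lesssim 2^{-s\gamma}$. This is exactly where the Fourier transform of the cone-localized kernel $\mathcal K_{j,\nu}$ has no usable decay, so nothing can be gained from $\widehat{\Omega\,d\sigma}$ there. The gain is geometric. One uses Cauchy--Schwarz in $\nu$ with $\sup_\xi\sum_\nu|m_\nu(\xi)|^2\lesssim 2^{s\gamma(d-2)}$, then a $TT^*$ expansion for each $\nu$, using that the composed kernel lives on tubes $y+E_{j,\nu}$ of relative measure $2^{-s\gamma(d-1)}$.

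The hard part, which your plan does not address, is the BMO factor inside $TT^*$. After restricting to $\ell(I)\le\ell(J)$, one must bound $\int|L^\nu_{J,Q}(x,y)|\,|b(x)-b_Q|\,S^\nu_J(x,y)\,dx$. Here $S^\nu_J$ is unbounded: it contains the factors $|b-b_R|$ and stacks contributions from all smaller scales. The paper proceeds in three steps:
\begin{itemize}
\item It shows that the normalized $L^1$ average of $S^\nu_J$ on $16J$ is $\lesssim 2^{-2s\gamma(d-1)}$, up to the factor $s\|\Omega\|_\infty\|b\|_{\BMO}$.
\item It proves a uniform $L^2(\log L)^{-2}$ bound for $S^\nu_J$ by Orlicz duality and the maximal function.
\item It applies Lemma \ref{lem:orlicz-local-interpolation} to get an $L^q$ average $\lesssim 2^{-2\epsilon s\gamma(d-1)}$ with $\epsilon$ close to $1$, and pairs this with $\|b-b_Q\|_{L^{q'}(16J)}$.
\end{itemize}
Having $\epsilon$ near $1$ is essential, since the final exponent is $s\gamma(2d-3-2\epsilon(d-1))$. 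This is where the Orlicz interpolation lemma belongs, not in the remainder term.

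For the remainder $(b-b_Q)(I-G_\nu)u$, $L^1$/$L^3$ interpolation is the right frame. The available $L^3$ bound loses $2^{s[\gamma(N+d-1)+\eta]}$, from the Mikhlin constants of $m_\nu$ and the sum over $\nu$. Obtaining even that bound requires rewriting the term as $(I-G_\nu)[(b-b_Q)u]-[b,G_\nu]u$ and proving an $L^3$ commutator bound with $A_3$ exponential weights. So the $L^1$ bound must decay fast enough to beat three times that exponent, and your plan names no source for it.

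The mechanism is nonstationary phase. For $\theta\in E_\nu$ and $\xi$ off the support of $m_\nu$, one has $|\langle\theta,\xi\rangle|\gtrsim 2^{-s\gamma}|\xi|$. Integrating by parts $N_1$ times in the radial variable then gains $2^{-N_1(j-k-s\kappa-s\gamma)}$ at $|\xi|\approx 2^{-k}$, with each $r$-derivative of $\widetilde\chi_{J^\iota}$ costing $2^{-j+s\kappa}$. This helps only for $k\le j-s\varepsilon$ with $\varepsilon>\kappa+\gamma$, and the band $k>j-s\varepsilon$ must be handled by the cancellation of $h_Q$.

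Your plan conflicts with this in two ways:
\begin{itemize}
\item Your parameter ordering puts the frequency threshold $2^{-j+\gamma s}$ below the cutoff frequency $2^{-j+\beta s}$. That leaves a band where each integration by parts loses, and you give no argument there.
\item You split $h_Q$ in frequency before multiplying by the cutoff. This does not localize the frequencies of $(T_{j,\nu}h_Q)\widetilde\chi_{J^\iota}$, which is what the integration by parts requires.
\end{itemize}

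Your treatment of $[b,G_\nu]$ does match the paper: Stein--Weiss interpolation between the $\ell^2$-valued unweighted bound and the rough weighted bound, then Cauchy's formula on a shrunken circle.
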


\noindent \textbf{Linearization of the maximal operator.} We now linearize the maximal operator, following Lai \cite{Lai2025}.  Fix
$s,n$, and $\iota$, and set
\begin{equation*}
	\mathcal L_{s,\iota}^{\#,n}
	\coloneqq \set{J^\iota:J\in\mathcal I_{s,\iota}^{\#,n}}.
\end{equation*}
Any two cubes in this collection are either disjoint or nested. For a collection of cubes $\mathscr A$, let $\operatorname{Max}(\mathscr A)$ denote the subcollection of cubes maximal in $\mathscr A$ with respect to inclusion. Define
\begin{equation*}
	\mathcal L_1
	\coloneqq \operatorname{Max} \big( \mathcal L_{s,\iota}^{\#,n} \big).
\end{equation*}
Recursively, define
\begin{equation*}
	\mathcal L_{u+1}
	\coloneqq \operatorname{Max} \Big( 
	\mathcal L_{s,\iota}^{\#,n}
	\setminus\bigcup_{v=1}^{u}\mathcal L_v \Big).
\end{equation*}

Set
\begin{equation*}
	u_0 \coloneqq C_02^{ds}.
\end{equation*}
By the definition of $F_{s,\iota}^n$, each
$J^\iota\in\mathcal L_{s,\iota}^{\#,n}$ has at most $u_0$ strict ancestors in
$\mathcal L_{s,\iota}^{\#,n}$ that lie in $F_{s,\iota}^{n-1}$. Hence
\begin{equation*}
	\mathcal L_u=\varnothing,
	\quad u\geq u_0+1.
\end{equation*}
For $1\leq u\leq u_0$, define
\begin{equation*}
	\mathcal M_u
	\coloneqq \set{J:J^\iota\in\mathcal L_u}.
\end{equation*}
Cubes in the same generation $\mathcal L_u$ are pairwise disjoint. For each fixed $x$, the relevant cubes containing $x$ form a nested chain, from larger to smaller cubes as $u$ increases. Ordering the same chain by increasing side length $\ell(J)$ reverses this order. It follows that
\begin{align*}
	&\sup_{l\in\Z}
	\Bigg| 
	\sum_{\substack{J\in\mathcal I_{s,\iota}^{\#,n}\\\ell(J)\geq2^l}}
	\sum_{\substack{Q\in\Qmc_{j-s}\\Q\subset\frac12 J}}
	\paren{b-b_Q}T_Jh_Q\chi_{J^\iota}
	\Bigg| \\
	&\quad=
	\sup_{1\leq v\leq u_0}
	\Bigg| \sum_{u=1}^{v} \sum_{J\in\mathcal M_u}
	\sum_{\substack{Q\in\Qmc_{j-s}\\Q\subset\frac12 J}}
	\paren{b(x)-b_Q}T_Jh_Q(x)\chi_{J^\iota}(x) \Bigg|.
\end{align*}
Thus the maximal truncation becomes the maximal partial sum of a finite
sequence.  Lemma \ref{lem:rademacher-menshov-sec3} reduces the problem to the
following signed estimate.

\begin{proposition}
	\label{prop:sec3-linearized-signed-L2}
	There exists $\delta>0$ such that, for every choice of signs
	$\eps_1,\ldots,\eps_{u_0}\in\{-1,1\}$,
	\begin{align*}
		\Bigg\| \sum_{u=1}^{u_0}\eps_u \sum_{J\in\mathcal M_u}
		\sum_{\substack{Q\in\Qmc_{j-s}\\Q\subset\frac12 J}}
		\paren{b-b_Q}T_Jh_Q\chi_{J^\iota}\Bigg\|_{L^2} \lesssim 2^{-n}2^{-\delta s/2}
		\Bnorm \|f\|_{L^1}^{1/2}.
	\end{align*}
\end{proposition}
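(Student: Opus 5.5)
The plan is to follow Lai's scheme for $\Tmax$, treating the extra factor $(b-b_Q)$ by localization to $J$ and the John--Nirenberg inequality.

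Fix the signs and write the operator as $\sum_{J\in\mathcal I}\eps_{u(J)}\sum_Q (b-b_Q)T_Jh_Q\chi_{J^\iota}$, with $\mathcal I=\mathcal I^{\#,n}_{s,\iota}$. First we regularize. Write $\mathcal K_j=\mathcal K_j*\phi_{j-\gamma s}+\mathcal K_j*(\delta_0-\phi_{j-\gamma s})$ for a Schwartz mollifier $\phi$ at scale $2^{j-\gamma s}$, with small $\gamma$. In parallel, replace the sharp cutoff $\chi_{J^\iota}$ by a smooth bump adapted to $J^\iota$ that equals $1$ except on a $2^{-\gamma s}\ell(J)$-neighbourhood of $\partial J^\iota$.

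Two of the resulting pieces are handled by localized interpolation. The \emph{boundary} piece lives in that thin collar of $\partial J^\iota$. For it we prove an $L^1$ bound with gain $2^{-\gamma s}$. We replace $b-b_Q$ by $(b-b_J)+(b_J-b_Q)$, use $|b_J-b_Q|\lesssim s\Bnorm$, and use Corollary \ref{cor:bmo-subset-integral} on the collar, which costs only a factor $s$. We then interpolate against a uniform $L^3$ bound, which follows from $\|\Omega\|_\infty\le 2^{\eta s}$, Corollary \ref{cor:bmo-Lp}, and the Carleson packing of Lemma \ref{lem:relevant-cubes-carleson} restricted to $F^{n-1}_{s,\iota}$. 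This interpolation uses Proposition \ref{prop:L1-L3-interpolation}, or Lemma \ref{lem:orlicz-local-interpolation} at the level of each $J$.

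The \emph{low-frequency} piece $\mathcal K_j*\phi_{j-\gamma s}$ acting on $h_Q$ gains from the cancellation of $h_Q$. Since $\ell(Q)=2^{j-s}$ is far below the mollification scale, we obtain pointwise decay $2^{-(1-\gamma)s}2^{\eta s}$ times an average of $|h_Q|$. This gives an $L^1$ bound with exponential decay, again interpolated with $L^3$.

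The factor $2^{-n}$ comes from Lemma \ref{lem:F-measure-decay}. We have $J^\iota\subset F^{n-1}_{s,\iota}$, so after Cauchy--Schwarz the $L^2$ mass is controlled by $|F^{n-1}|^{1/2}$ times bounded overlap $\le u_0=C_02^{ds}$. The polynomial loss $2^{ds}$ must be beaten by the $\gamma$-dependent gains, so $\eta$ and the exponents are chosen in the order $\eta\ll\gamma\ll1$.

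The \emph{high-frequency} piece is the main obstacle. We decompose in frequency by angular sectors using projections $G_\nu$, in the spirit of Seeger, at the level where $\widehat{\mathcal K_j}$ has decay $2^{-\epsilon s}$ away from directions normal to the angular cone of $\Omega$. With $u$ denoting the localized high-frequency piece $\sum\eps_u T^{\text{high}}_Jh_Q\chi_J$, we use the identity
\[
(b-b_Q)u=G_\nu[(b-b_Q)u]+[b,G_\nu]u+(b-b_Q)(I-G_\nu)u .
\]
We treat the three terms as follows.
\begin{enumerate}[(i)]
\item For the principal term we estimate $\|\sum G_\nu(\cdot)\|_2^2$ by a $TT^*$ expansion. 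The kernel of $G_\nu\mathcal K_jG_\nu^*$ has stationary-phase decay $2^{-\epsilon s}$, and the $b$-factors are absorbed by Corollary \ref{cor:bmo-Lp} on each $J$.
\item The commutator $[b,G_\nu]$ is handled by the Coifman--Rochberg--Weiss method. We write it as $\frac{1}{2\pi i}\oint e^{zb}G_\nu e^{-zb}\,z^{-2}\,\dd z$ on $|z|=c/\Bnorm$. Lemma \ref{lem:bmo-exponential-weight} and the weighted Mikhlin theorem (Lemma \ref{lem:hormander-mihlin-weighted-sec3}) give a uniform weighted bound. Stein--Weiss interpolation (Theorem \ref{thm:stein-weiss}, Remark \ref{rem:stein-weiss-p2}) between this bound and the unweighted decaying estimate for $G_\nu$ transfers part of the decay $2^{-\epsilon s}$.
\item For the remainder $(I-G_\nu)u$, the symbol is small on the support of $\widehat{\mathcal K_j}$, which gives decay in $L^2$ without the weight. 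Localizing to $J$ and using Lemma \ref{lem:orlicz-local-interpolation} with $L^3$ control of $b-b_Q$ yields a decaying bound.
\end{enumerate}

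Summing the pieces gives $2^{-n}2^{-\delta s/2}\Bnorm\|f\|_1^{1/2}$. The delicate point is ensuring that the weighted interpolation in (ii) retains a positive power of the microlocal decay after paying $2^{\eta s}$ and the overlap loss.
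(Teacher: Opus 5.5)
Your decomposition matches the paper's. It has the smoothed cutoff, the boundary and low-frequency pieces by $L^1$--$L^3$ interpolation, the three-term identity with $G_\nu$, and Coifman--Rochberg--Weiss plus Stein--Weiss for $[b,G_\nu]$. The gap is in the bookkeeping that should produce the factor $2^{-n}$ and the $L^3$ endpoint bounds.

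\textbf{The $2^{ds}$ loss cannot be absorbed.} You take the $L^3$ bound from the Carleson packing of Lemma \ref{lem:relevant-cubes-carleson}. You take $2^{-n}$ from $|F^{n-1}_{s,\iota}|^{1/2}$ with overlap $u_0=C_02^{ds}$, and assert that this loss is beaten by the $\gamma$-dependent gains. The loss $2^{ds}$ is exponential in $s$, not polynomial. Even $2^{ds/3}$ in the $L^3$ norm becomes $2^{ds/4}\geq 2^{s/2}$ in $L^2$. Your $L^1$ gains, raised to the power $1/4$, are at most $2^{-s/4}$, and only $2^{-\gamma s/4}$ for the boundary piece.

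The missing point is that overlaps of the cubes $J$ never need to be counted. Each bad cube $Q$ occurs at most once in the double sum over $(J,Q)$: $\ell(Q)$ fixes the scale of $J$, and the cubes $\frac12J$ of a fixed scale are disjoint. The paper proceeds as follows:
\begin{enumerate}[(a)]
\item It uses $|T_jh_Q|\lesssim\|\Omega\|_\infty\|h_Q\|_1|J|^{-1}\chi_J$ and $(|J|^{-1}\int_J|b-b_Q|^6)^{1/6}\lesssim s\Bnorm$, and dualizes against $g\in L^{3/2}$.
\item It bounds the $J$-average of $g^{6/5}$ by $M_{6/5}g$ at points of $Q$. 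This gives $\int W\,M_{6/5}g$ with $W=\sum_{J,Q}\|h_Q\|_1|Q|^{-1}\chi_Q$ and $\|W\|_3\lesssim(\sum_{J,Q}\|h_Q\|_1)^{1/3}$.
\item Lemma \ref{lem:layered-bad-mass}, $\sum_{J,Q}\|h_Q\|_1\lesssim 2^{-2n}\|f\|_1$, then gives $2^{-2n}$ at the $L^1$ endpoint and $2^{-2n/3}$ at the $L^3$ endpoint. Interpolation yields exactly $2^{-n}$ in $L^2$, losing only $s\,2^{\eta s}$.
\end{enumerate}
The same device gives the $L^3$ bounds in the remainder term.

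\textbf{The mechanism you give for the principal term (i) does not work.} The $h_Q$ are only in $L^1$, so an $L^2$ decay property of $G_\nu\mathcal K_jG_\nu^*$ cannot be applied to them. Also, the symbol of $G_\nu$ selects frequencies nearly orthogonal to $e_\nu$. There $|\langle\theta,\xi\rangle|$ has no lower bound for $\theta\in E_\nu$, so the integration by parts in $r$ that drives the $(I-G_\nu)$ remainder estimate is unavailable.

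In the paper the gain has two sources:
\begin{enumerate}[(a)]
\item the almost-orthogonality $\sum_\nu\|G_\nu u\|_2^2\lesssim2^{s\gamma(d-2)}\|u\|_2^2$;
\item a $TT^*$ expansion for each fixed $\nu$. The kernel lives in the tube $E_{j,\nu}$ of measure $2^{jd-s\gamma(d-1)}$, and the bad mass meeting such a tube is bounded by its volume. This gives $2^{-2s\gamma(d-1)}$ per direction.
\end{enumerate}
The net margin is only $2^{s\gamma(d-2)}\cdot2^{s\gamma(d-1)}\cdot2^{-2s\gamma(d-1)}=2^{-s\gamma}$. So with $b-b_Q$ present you must keep all but a small fraction of the tube gain, and Corollary \ref{cor:bmo-Lp} alone does not achieve this. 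The paper puts $b-b_Q$ in $L^{q'}$ with $q$ close to $1$ and applies Lemma \ref{lem:orlicz-local-interpolation} to $S_J^\nu$, which itself contains the factors $|b-b_R|$. That step needs a uniform $L^2(\log L)^{-2}$ bound for $S_J^\nu$, which the paper proves via Young's inequality $uv\lesssim e^u-1+v\log(e+v)$ and the maximal function.
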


Indeed, Proposition \ref{prop:sec3-linearized-signed-L2} and the
Rademacher-Menshov theorem introduce only a factor
$\log(2+u_0)\lesssim s$.  After decreasing $\delta$, this factor is absorbed by
the exponential decay in $s$, yielding Proposition
\ref{prop:fixed-layer-maximal}.  Summation in $n$ and $\iota$ then gives
Proposition \ref{prop:small-omega-key-L2}.

\section{Estimates for the linearized operator}
\label{sec:linearized-operator-estimate}

We now prove the signed estimate obtained in the preceding section.  Fix
$s\geq200$, $n\geq1$, $1\leq\iota\leq2^d$, and
$\vec w\in\{0,\frac12\}^d$.  Also fix signs
$\varepsilon_1,\ldots,\varepsilon_{u_0}\in\{-1,1\}$, where
$u_0=C_02^{ds}$.  We first reindex the operator by the scale cubes $J$ and the
Calder\'on-Zygmund bad cubes $Q$.  We then smooth the spatial cutoff and
decompose the resulting operator microlocally.

Recall the expression on the left-hand side of Proposition \ref{prop:sec3-linearized-signed-L2}:
\begin{equation}\label{eq:sec4-beta-u}
	\sum_{u=1}^{u_0}\eps_u\sum_{J\in\mathcal M_u} \sum_{\substack{Q\in\mathcal Q_{j-s}\\Q\subset\frac12J}}
	\big(b(x)-b_Q\big)T_Jh_Q(x)\chi_{J^\iota}(x),
	\ \ell(J)=2^j.
\end{equation}
Throughout this section, $\ell(J)=2^j$ and $\ell(I)=2^i$ unless stated
otherwise.  Thus uppercase letters denote cubes, while the corresponding
lowercase letters denote their scales.  For $J\in\mathcal M_u$, set
\begin{equation*}
	\varepsilon_J \coloneqq \varepsilon_u.
\end{equation*}
Since $\mathcal M_1,\ldots,\mathcal M_{u_0}$ partition $\mathcal I_{s,\iota}^{\#,n}$, we may rewrite \eqref{eq:sec4-beta-u} as
\begin{align*}
	\sum_{\substack{J\in\mathcal I_{s,\iota}^{\#,n}}}
	\sum_{\substack{Q\in\mathcal Q_{j-s}\\Q\subset\frac12J}}
	\varepsilon_J\big(b(x)-b_Q\big)
	T_Jh_Q(x)\chi_{J^\iota}(x).
\end{align*}
Because the sum is restricted to cubes with $Q \subset \frac{1}{2} J$,
\begin{equation*}
	T_Jh_Q=T_jh_Q.
\end{equation*}
Hence \eqref{eq:sec4-beta-u} becomes
\begin{align*}
	\sum_{\substack{J\in\mathcal I_{s,\iota}^{\#,n}}}
	\sum_{\substack{Q\in\mathcal Q_{j-s}\\Q\subset\frac12J}}
	\varepsilon_J\big(b(x)-b_Q\big)
	T_j h_Q(x)\chi_{J^\iota}(x).
\end{align*}
To simplify the sums, define the collection of relevant bad cubes by
\begin{equation*}
	\mathfrak{Q}_{J-s}
	\coloneqq \Big\{
	Q\in\mathcal Q_{j-s}:
	Q\subset\frac12J \Big\}.
\end{equation*}
With this notation, \eqref{eq:sec4-beta-u} takes the form
\begin{equation*}
	\sum_{\substack{J\in\mathcal I_{s,\iota}^{\#,n}}}
	\sum_{\substack{Q\in\mathfrak{Q}_{J-s}}}
	\varepsilon_J\big(b(x)-b_Q\big)
	T_j h_Q(x)\chi_{J^\iota}(x).
\end{equation*}
After relabeling the decay exponent, Proposition
\ref{prop:sec3-linearized-signed-L2} is equivalent to the following statement.
\begin{proposition}
	\label{prop:sec4-reindexed-linearized-estimate}
	There exists a constant $\delta>0$ such that, for any choice of signs $\varepsilon_1,\ldots,\varepsilon_{u_0}\in\{-1,1\}$,
	\begin{equation*}
		\Big\|
		\sum_{J \in \mathcal{I}_{s,\iota}^{\#,n}} \sum_{Q\in\mathfrak{Q}_{J-s}}
		\varepsilon_J\big(b-b_Q\big)
		\big(T_jh_Q\big)\chi_{J^\iota}
		\Big\|_{L^2(\mathbb R^d)}
		\lesssim
		2^{-n}2^{-\delta s}
		\|b\|_{\operatorname{BMO}(\mathbb R^d)}
		\|f\|_{L^1(\mathbb R^d)}^{1/2},
	\end{equation*}
\end{proposition}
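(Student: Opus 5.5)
The approach follows the roadmap in the introduction: regularize the cutoff $\chi_{J^\iota}$, decompose the kernel $\mathcal K_j$ microlocally, and interpolate each resulting piece between an $L^1$ bound with decay in $s$ and a uniform $L^3$ (or $L^2$) bound.

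\textbf{Step 1: smooth the cutoff.} Replace $\chi_{J^\iota}$ by a smooth bump $\phi_{J^\iota}$ that equals $1$ on $J^\iota$ minus a boundary layer of width $2^{j-\kappa s}$, for a small $\kappa>0$. The boundary error $\chi_{J^\iota}-\phi_{J^\iota}$ lives on a set of relative measure $2^{-\kappa s}$ inside $J$.
\begin{itemize}
\item[(a)] In $L^1$, Corollary \ref{cor:bmo-subset-integral} together with the bound $\norm{\Omega}_\infty\le 2^{\eta s}$ controls the boundary piece by $2^{(\eta-\kappa)s}s\Bnorm\|h_Q\|_1$ per pair $(J,Q)$.
\item[(b)] A uniform $L^3$ bound comes from Corollary \ref{cor:bmo-Lp} and the bounded overlap $u_0\lesssim 2^{ds}$ from the stratification, at the cost of at most a polynomial factor in $2^{s}$.
\item[(c)] Interpolation via Lemma \ref{lem:orlicz-local-interpolation} or Proposition \ref{prop:L1-L3-interpolation} then gives decay $2^{-\delta s}$, provided $\eta\ll\kappa$.
\end{itemize}
The factor $2^{-n}$ should come from the stratification. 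The sum lives on $J^\iota\subset F^{n-1}_{s,\iota}$, whose measure is $\lesssim 2^{-2n}\|f\|_1$ by Lemma \ref{lem:F-measure-decay}. The $L^2$ norm is localized there, and Cauchy--Schwarz against the support gives $2^{-n}\|f\|_1^{1/2}$ once a pointwise-averaged bound is in hand.

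\textbf{Step 2: split the kernel by frequency.} Write $\mathcal K_j=\mathcal K_j*\Psi_{j-\kappa' s}+\mathcal K_j*(\delta-\Psi_{j-\kappa' s})$, where $\Psi$ is a Littlewood--Paley low-pass at frequency $2^{-j+\kappa' s}$.
\begin{itemize}
\item[(a)] \emph{Low frequency.} Use the cancellation of $h_Q$, which has mean zero on a cube of side $2^{j-s}$. After convolution with the smoothed kernel this produces a gain $2^{-(1-\kappa')s}$ in $L^1$. The same $L^1$/$L^3$ interpolation then gives decay.
\item[(b)] \emph{High frequency.} Decompose the angular part of the frequency into sectors of aperture $2^{-\gamma s}$ using projections $G_\nu$, as in Seeger and Lai, and write
\[(b-b_Q)u=G_\nu[(b-b_Q)u]+[b,G_\nu]u+(b-b_Q)(I-G_\nu)u.\]
\end{itemize}

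\textbf{Step 3: estimate the three high-frequency terms.}
\begin{itemize}
\item[(i)] \emph{Principal term.} Use $TT^*$ and orthogonality in the sectors $\nu$. This reduces to Seeger's estimate that $\widehat{\mathcal K_j}$ is small off the conormal directions. The BMO factor is controlled by Lemma \ref{lem:bmo-orlicz}, and $\sum_Q\|h_Q\|_1\lesssim\|f\|_1$ gives the $\|f\|_1^{1/2}$.
\item[(ii)] \emph{Commutator term.} Write $[b,G_\nu]=\frac{d}{dz}\big|_{z=0}e^{zb}G_\nu e^{-zb}$ and use the Cauchy integral over a small circle $|z|\Bnorm\le c$. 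Lemma \ref{lem:bmo-exponential-weight} makes $e^{p\operatorname{Re}(z)b}$ a uniformly controlled $A_2$ weight. Lemma \ref{lem:hormander-mihlin-weighted-sec3} then gives weighted bounds for the sector multipliers. Stein--Weiss interpolation (Theorem \ref{thm:stein-weiss}) between this weighted bound and the unweighted bound with decay should transfer the decay.
\item[(iii)] \emph{Remainder.} The term $(I-G_\nu)u$ is small by nonstationary phase, which gives a rapidly decaying $L^1$ estimate. It is again interpolated against a crude $L^3$ bound.
\end{itemize}

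\textbf{Main obstacle.} I expect the commutator term (ii) to be the hardest. The difficulty is keeping the decay in $2^{-s}$ uniform along the analytic family: the sector multipliers have symbol derivatives that grow like $2^{\gamma s|\alpha|}$. My first attempt would be to track these losses as a polynomial power of $2^s$ in the weighted bound, then choose the interpolation parameter $\theta$ in Remark \ref{rem:stein-weiss-p2} small enough to beat them with the decay from the unweighted $TT^*$ bound.
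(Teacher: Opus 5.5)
Your architecture is the paper's: smoothed cutoff, low/high frequency split, the three-term identity for $(b-b_Q)u$, $TT^*$, the analytic family with Stein--Weiss, and nonstationary phase for the remainder. But the estimate that every $L^1$/$L^3$ interpolation rests on fails as you propose it.

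In Step 1(b) you get the uniform $L^3$ bound from the overlap bound $u_0\lesssim 2^{ds}$, accepting a loss that is a power of $2^{s}$. H\"older in the overlap, $|\sum_J F_J|^3\le u_0^2\sum_J|F_J|^3$, costs a factor $2^{2ds/3}$ in $L^3$. After $\|\cdot\|_{L^2}\le\|\cdot\|_{L^1}^{1/4}\|\cdot\|_{L^3}^{3/4}$ this becomes $2^{ds/2}$. On the other side, your $L^1$ gains are at most $2^{-s}$. For the low-frequency piece the only source of gain is the cancellation of $h_Q$ at scale $\ell(Q)=2^{j-s}$, which gives $2^{-(1-\kappa')s}$. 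The boundary layer gives $2^{-\kappa s}$ with $\kappa<1$. So after interpolation you gain at most $2^{-s/4}$ against a loss of $2^{ds/2}$. In fact any $L^3$ loss $2^{cs}$ with $c\geq\kappa/3$ (resp.\ $c\ge(1-\kappa')/3$) destroys the argument, and taking $\eta\ll\kappa$ does not help.

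The missing idea is an $L^3$ bound that loses only $s\|\Omega\|_{L^\infty}\le s2^{\eta s}$ and never counts the nested cubes $J$ over a point. The paper obtains it by duality:
\begin{itemize}
\item Take $g\ge0$ with $\|g\|_{L^{3/2}}=1$. John--Nirenberg and H\"older give $|J|^{-1}\int_J|b-b_Q|g\lesssim s\Bnorm\, M_{6/5}g(y)$ for every $y\in Q$.
\item Hence $\int|\Esn|g\lesssim s\|\Omega\|_{L^\infty}\Bnorm\int W\,M_{6/5}g$, where $W=\sum_{J,Q}|Q|^{-1}\|h_Q\|_{L^1}\chi_Q$.
\item The bad cubes are pairwise disjoint and $\|h_Q\|_{L^1}\lesssim|Q|$, so $\|W\|_{L^3}\lesssim(\sum_{J,Q}\|h_Q\|_{L^1})^{1/3}$. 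Also $M_{6/5}$ is bounded on $L^{3/2}$.
\end{itemize}
The same device gives the $L^3$ bounds for the low-frequency and remainder pieces. In the remainder the only extra loss is $2^{s\gamma(N+d-1)}$, from the Mikhlin constants of $G_\nu$ and $[b,G_\nu]$ and from the direction count. This is harmless because $\gamma$ is chosen tiny.

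Three further points.

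\emph{The factor $2^{-n}$.} It cannot come from Cauchy--Schwarz on $F^{n-1}_{s,\iota}$. There is no pointwise bound to use: $b-b_Q$ is unbounded and the pointwise overlap can be as large as $u_0$. Moreover, for $n=1$ that set is $\R^d$. In the paper the factor comes from Lemma \ref{lem:layered-bad-mass}, $\sum_{J,Q}\|h_Q\|_{L^1}\lesssim 2^{-2n}\|f\|_{L^1}$. This enters the $L^1$ and $L^3$ bounds as $2^{-2n}$ and $2^{-2n/3}$, giving $2^{-n}$ after interpolation. It enters the $TT^*$ and commutator bounds directly.

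\emph{The principal term (i).} The $TT^*$ expansion produces the product $(b-b_Q)\overline{(b-b_R)}$ over overlapping tubes, and Lemma \ref{lem:bmo-orlicz} alone yields no decay. The paper proceeds as follows:
\begin{itemize}
\item It bounds the auxiliary function $S_J^\nu(\cdot,y)$ in $L^1(16J)$ by $s\|\Omega\|_{L^\infty}\Bnorm 2^{jd-2s\gamma(d-1)}$, using the tube geometry.
\item It bounds $S_J^\nu(\cdot,y)$ uniformly in $L^2(\log L)^{-2}$ over $16J$.
\item Lemma \ref{lem:orlicz-local-interpolation} turns these two bounds into an $L^q$ bound with gain $2^{-2\epsilon s\gamma(d-1)}$, which is paired with $\|b-b_Q\|_{L^{q'}(16J)}$.
\end{itemize}
The Fourier decay off the conormal directions is used only for $(I-G_\nu)$, not for this term.

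\emph{The commutator term (ii).} Your plan matches the paper's. The decay is the orthogonality gain $2^{s\gamma(d-2)}$, versus the count $2^{s\gamma(d-1)}$, in $\sum_\nu\|G_\nu u\|_{L^2}^2$. This gain survives Stein--Weiss interpolation with a small parameter against the weighted Mikhlin bound, which needs $w^{1+a_0}\in A_2$ and so is available for $|z|\Bnorm$ small. The result is then paired with Lai's bound $\sum_\nu\|u_\nu\|_{L^2}^2\lesssim 2^{-s\gamma(d-1)}\|\Omega\|_{L^\infty}^2\sum_{J,Q}\|h_Q\|_{L^1}$.
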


We shall repeatedly use the following bound for the bad functions in a fixed
stratum.
\begin{lemma}[\cite{Lai2025}]
	\label{lem:layered-bad-mass}
	For fixed $s\geq 200$, $1\leq\iota\leq 2^d$, and $n\geq 1$,
	\begin{equation*}
		\begin{aligned}
			& \sum_{\substack{
					J\in\mathcal I_{s,\iota}^{\#,n}		}}
			\sum_{Q\in\mathfrak Q_{J-s}}
			\|h_Q\|_{L^1(\mathbb R^d)} \lesssim
			2^{-2n}\|f\|_{L^1(\mathbb R^d)}.
		\end{aligned}
	\end{equation*}
\end{lemma}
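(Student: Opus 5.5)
We need to prove Lemma (layered bad mass): $\sum_{J\in\mathcal I^{\#,n}_{s,\iota}}\sum_{Q\in\mathfrak Q_{J-s}}\|h_Q\|_1\lesssim 2^{-2n}\|f\|_1$.

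The plan is to convert the left-hand side into a measure estimate for the level set $F^{n-1}_{s,\iota}$ and then apply Lemma~\ref{lem:F-measure-decay}. The case $n=1$ is harmless: by the Calder\'on--Zygmund decomposition, $\|h_Q\|_1\le 2\int_Q|f|$. Moreover, for a fixed $Q\in\mathcal Q_{j-s}$ there are at most $2^d$ cubes $J\in\D^{\vec w}$ of side $2^j$ with $Q\subset\frac12 J$; in fact only boundedly many, since the cubes $\frac12J$ at a fixed scale overlap boundedly. Hence each bad cube $Q$ is counted for at most $C_d$ pairs $(J,Q)$, because $j=\log_2\ell(Q)+s$ is determined by $Q$. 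Summing $2\int_Q|f|$ over the disjoint cubes $Q\in\Qmc$ gives $\lesssim\|f\|_1$.

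For $n\ge2$, every $J\in\mathcal I^{\#,n}_{s,\iota}$ satisfies $J^\iota\subset F^{n-1}_{s,\iota}$. The first step is to show that each $Q$ occurring in the sum lies in the fixed dilate $C J^\iota$. Since $\ell(J^\iota)=\ell(J)/2$, we have $\frac12J\subset 3J^\iota$. Consequently $Q\subset 3J^\iota$, and in particular $Q\subset 3\,\widetilde F$, where we write $\widetilde F\coloneqq\bigcup\{3J^\iota: J^\iota\subset F^{n-1}_{s,\iota}\}$. The bounded-multiplicity argument above then gives
\[
\sum_{J\in\mathcal I^{\#,n}}\sum_{Q\in\mathfrak Q_{J-s}}\|h_Q\|_1\lesssim \sum_{Q\in\Qmc,\ Q\subset \widetilde F}\int_Q|f| .
\]
Here we use $\int_Q|f|\le 2^d|Q|$, which holds because $Q$ is a Calder\'on--Zygmund cube at height $1$. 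Since the cubes $Q$ are disjoint, the right-hand side is at most $2^d|\widetilde F|$.

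It therefore suffices to bound $|\widetilde F|\lesssim|F^{n-1}_{s,\iota}|$. The cubes $J^\iota$ involved all belong to one nested family, so it is enough to take the maximal cubes among them. These maximal cubes are disjoint subsets of $F^{n-1}$, and dilating each by $3$ multiplies its measure by only $3^d$. Hence $|\widetilde F|\le 3^d|F^{n-1}_{s,\iota}|$, and Lemma~\ref{lem:F-measure-decay} gives $|F^{n-1}_{s,\iota}|\lesssim 2^{-2(n-1)}\|f\|_1\approx2^{-2n}\|f\|_1$.

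The step I expect to be the main obstacle is justifying $\int_Q|f|\lesssim|Q|$ and, more importantly, that the passage from $\sum\|h_Q\|_1$ to a measure loses nothing, since each $Q$ is counted with bounded multiplicity. If the multiplicity across different $J$ for the same $Q$ were not bounded (it is, because $j$ is fixed by $Q$), one would instead have to use the Carleson bound of Lemma~\ref{lem:relevant-cubes-carleson}, which would introduce an unwanted factor $2^{ds}$.
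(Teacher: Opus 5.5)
Your proof is correct. The paper itself gives no proof of this lemma; it imports it from Lai, together with Lemma~\ref{lem:F-measure-decay}. Your argument derives it from Lemma~\ref{lem:F-measure-decay}, and every step holds:
\begin{itemize}
\item For $n\ge2$, each $Q\in\mathfrak Q_{J-s}$ satisfies $Q\subset\frac12J\subset J\subset 3J^\iota$ with $J^\iota\subset F^{n-1}_{s,\iota}$.
\item Each bad cube is used by at most one pair $(J,Q)$. Indeed, $j$ is fixed by $\ell(Q)$, and for fixed $\vec w$ the cubes $\frac12J$ with $\ell(J)=2^j$ are pairwise disjoint, so your bound $2^d$ is even generous.
\item The bound $\|h_Q\|_1\le 2^{d+1}|Q|$ and the disjointness of the Calder\'on--Zygmund cubes turn the double sum into $2^{d+1}\bigl|\bigcup 3J^\iota\bigr|$.
\item This union has measure at most $3^d|F^{n-1}_{s,\iota}|$.
\end{itemize}
This makes explicit that the lemma needs nothing beyond the decay of the level sets. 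In particular, the Carleson bound with its factor $2^{ds}$ plays no role here. Only the condition $J^\iota\subset F^{n-1}_{s,\iota}$ is used, not $J^\iota\not\subset F^{n}_{s,\iota}$.

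Two small points should be written out.

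First, the passage to maximal cubes needs existence of maximal elements and monotonicity of dilates.
\begin{itemize}
\item The $J^\iota$ are standard dyadic cubes, because bisecting a cube of $\D^{\vec w}$ produces cubes of $\D$ when $2\vec w\in\{0,1\}^d$.
\item Their sizes are bounded because $|F^{n-1}_{s,\iota}|<\infty$ for $n\ge2$, so maximal elements exist.
\item If $J^\iota\subset\widehat J$, then $\ell(J^\iota)\le\ell(\widehat J)$, and hence $3J^\iota\subset 3\widehat J$.
\end{itemize}
Alternatively, you can avoid nesting altogether: $\bigcup 3J^\iota\subset\{M\chi_{F^{n-1}_{s,\iota}}\ge 3^{-d}\}$, and the weak type $(1,1)$ bound for $M$ gives the same estimate.

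Second, ``$Q\subset 3\,\widetilde F$'' should read ``$Q\subset\widetilde F$'', since $\widetilde F$ is already defined as the union of the dilates $3J^\iota$.
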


\subsection{Smoothing and microlocal decomposition}
\label{subsec:smoothing-microlocal-decomposition}

We replace $\chi_{J^\iota}$ in Proposition
\ref{prop:sec4-reindexed-linearized-estimate} by a smooth cutoff, thereby
isolating an error supported near $\partial J^\iota$.  We then separate the
relevant frequency scales and decompose the high-frequency term according to
the directions of the kernel and the Fourier variable.

Choose a nonnegative radial function $\varpi\in C_c^\infty(\mathbb R^d)$ such
that
\begin{align*}
	& \operatorname{supp}\varpi \subset\big\{x\in\mathbb R^d:|x|\leq2^{-5}\big\}, \\
	& \qquad \int_{\mathbb R^d}\varpi(x)\,\dd x=1.
\end{align*}
For $t\in\mathbb R$, set
\begin{align*}
	\varpi_t(x) &  \coloneqq 2^{-td}\varpi(2^{-t}x), \\
	P_tg(x) &  \coloneqq \int_{\mathbb R^d}\varpi_t(x-z)g(z)\,\dd z.
\end{align*}
Fix a constant $\kappa\in(0,1)$, to be chosen later, and define
\begin{equation*}
	\widetilde\chi_{J^\iota}(x)
	\coloneqq P_{j-s\kappa} \chi_{J^\iota}(x).
\end{equation*}
By the nonnegativity and normalization of $\varpi$,
\begin{align*}
	& \qquad  0\leq\widetilde\chi_{J^\iota}\leq1, \\
	& \operatorname{supp}\widetilde\chi_{J^\iota}
	\subset
	\Big\{x:\operatorname{dist}(x,J^\iota)
	\leq2^{j-s\kappa-5}\Big\}.
\end{align*}
Moreover, for every multi-index $|\alpha| \leq N$,
\begin{equation*}
	|\partial^\alpha\widetilde\chi_{J^\iota}(x)|
	\lesssim
	2^{-(j-s\kappa)|\alpha|}.
\end{equation*}
Also,
\begin{equation*}
	\operatorname{supp}\big(\chi_{J^\iota}
	-\widetilde\chi_{J^\iota}\big)
	\subset
	\Big\{x:\operatorname{dist}(x,\partial J^\iota)
	\leq2^{j-s\kappa-5}\Big\}.
\end{equation*}
Thus $\chi_{J^\iota}-\widetilde\chi_{J^\iota}$ is supported in a boundary layer of thickness comparable to $2^{j-s\kappa}$.

We next decompose the kernel microlocally, following Seeger
\cite{Seeger1996}.  Choose $\gamma$ so that
\begin{equation*}
	0<\gamma<\kappa<1.
\end{equation*}
Choose a maximal $2^{-s\gamma-4}$-separated set of points in $\mathbb S^{d-1}$,
\begin{equation*}
	\Theta_s=\big\{e_{\nu,s}\}_\nu,
\end{equation*}
with the following properties:
\begin{enumerate}[(i)]
	\item 	$|e_{\nu,s}-e_{\nu',s}| \geq 2^{-s\gamma-4}$,
	\item For every $\theta \in \Sd$, there exists $e_{\nu,s}$ such that $|\theta-e_{\nu,s}| \leq2^{-s\gamma-4}$.
\end{enumerate}
A volume comparison gives
\begin{equation*}
	\card(\Theta_s)\lesssim2^{s\gamma(d-1)}.
\end{equation*}
We may further construct pairwise disjoint measurable sets $E_{\nu,s}\subset\mathbb S^{d-1}$ such that
\begin{enumerate}[(i)]
	\item $e_{\nu,s}\in E_{\nu,s}$,
	\item $\operatorname{diam}(E_{\nu,s})\leq2^{-s\gamma-2}$,
	\item $\mathbb S^{d-1}=\bigcup_{\nu}E_{\nu,s}$.
\end{enumerate}
For simplicity, we suppress the dependence on $s$ in $e_{\nu,s}$ and $E_{\nu,s}$, writing $e_\nu$ and $E_\nu$. For each fixed $\nu$, define the directionally localized kernel and operator by
\begin{align*}
	\mathcal{K}_{j,\nu}(x)
	&  \coloneqq \mathcal{K}_j(x)\chi_{E_\nu}\Big(\frac{x}{|x|}\Big), \\
	T_{j,\nu} f(x)
	&  \coloneqq \int_{\mathbb R^d}\mathcal{K}_{j,\nu}(x-y)f(y)\,\dd y.
\end{align*}

Finally, introduce a directional cutoff in frequency space.  Choose a
real-valued, nonnegative, even function $\Psi\in C_c^\infty(\mathbb R)$ with
$0\leq\Psi\leq1$ and
\begin{equation*}
	\Psi(x)=
	\begin{cases}
		1, & |x| \leq 2,\\
		0, & |x| \geq 4.
	\end{cases}
\end{equation*}
Define the Fourier multiplier operator $G_\nu $ by
\begin{equation*}
	\widehat{G_\nu  f}(\xi)
	\coloneqq \Psi\Big(
	2^{s\gamma}
	\Big\langle e_\nu,\frac{\xi}{|\xi|}\Big\rangle
	\Big)\widehat f(\xi).
\end{equation*}

The decomposition rests on the identity
\begin{equation*}
	(b-b_Q)u
	=G_\nu\big[(b-b_Q)u\big]
	+[b,G_\nu]u
	+(b-b_Q)(I-G_\nu)u.
\end{equation*}
Together with the smoothing operators $P_t$, this identity reduces
Proposition \ref{prop:sec4-reindexed-linearized-estimate} to the following five
estimates.

\begin{lemma}
	\label{lem:sec42-boundary-error}
	There exists a constant $\delta_1>0$ such that
	\begin{align*}
		&\Big\|
		\sum_{J\in\mathcal I_{s,\iota}^{\#,n}}
		\sum_{Q\in\mathfrak{Q}_{J-s}}
		\varepsilon_J\big(b-b_Q\big)
		\big(T_jh_Q\big)
		\big(\chi_{J^\iota}-\widetilde\chi_{J^\iota}\big)
		\Big\|_{L^2(\mathbb R^d)} \lesssim 2^{-n}2^{-\delta_1s}
		\|b\|_{\operatorname{BMO}(\mathbb R^d)}
		\|f\|_{L^1(\mathbb R^d)}^{1/2}.
	\end{align*}
\end{lemma}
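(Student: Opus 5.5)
Write $B_J\coloneqq\{x:\dist(x,\partial J^\iota)\leq2^{j-s\kappa-5}\}$ for the boundary layer, which has measure $|B_J|\lesssim 2^{-s\kappa}|J|$. The plan is to interpolate between an $L^1$ bound that gains the factor $2^{-s\kappa}$ from the thinness of $B_J$, and an $L^3$ bound that loses only $2^{C\eta s}$ from $\norm{\Omega}_\infty\leq2^{\eta s}$. We proceed in the spirit of Proposition \ref{prop:L1-L3-interpolation}, but, since the operator is not literally a fixed linear map of one function, we realize it as a linear map of the family $\{h_Q\}$, or else simply use H\"older's inequality $\norm{F}_2\leq\norm{F}_1^{1/4}\norm{F}_3^{3/4}$ on the sum itself. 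Write $F\coloneqq\sum_J\sum_{Q\in\mathfrak Q_{J-s}}\varepsilon_J(b-b_Q)(T_jh_Q)(\chi_{J^\iota}-\widetilde\chi_{J^\iota})$.

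\textbf{$L^1$ bound.} For fixed $J$ and $Q\in\mathfrak Q_{J-s}$, we use $|b-b_Q|\leq|b-b_J|+|b_J-b_Q|$ with $|b_J-b_Q|\lesssim s\Bnorm$, together with $|T_jh_Q(x)|\leq 2^{\eta s}2^{-jd}\norm{h_Q}_1$ for $x\in J$. This gives
\begin{equation*}
\norm{(b-b_Q)T_jh_Q(\chi_{J^\iota}-\widetilde\chi_{J^\iota})}_1
\lesssim 2^{\eta s}2^{-jd}\norm{h_Q}_1\Big(\int_{B_J\cap 2J}|b-b_J|+s\Bnorm|B_J|\Big).
\end{equation*}
By Corollary \ref{cor:bmo-subset-integral} with $R=2J$ and $E=B_J$, the bracket is $\lesssim \Bnorm\,s\,2^{-s\kappa}|J|$. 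Summing over $J,Q$ and using Lemma \ref{lem:layered-bad-mass} yields
\begin{equation*}
\norm{F}_1\lesssim s\,2^{(\eta-\kappa)s}\Bnorm\,2^{-2n}\norm f_1 .
\end{equation*}

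\textbf{$L^3$ bound.} This is the main obstacle, because the $J$'s overlap. Pointwise,
\begin{equation*}
|F|\leq 2^{\eta s}\sum_J 2^{-jd}\Big(\sum_{Q\in\mathfrak Q_{J-s}}|b-b_Q|\norm{h_Q}_1\Big)\chi_{\widetilde J},
\end{equation*}
where $\widetilde J$ is a slight enlargement of $J^\iota$. I would first expect to show $2^{-jd}\sum_{Q\in\mathfrak Q_{J-s}}\norm{h_Q}_1\lesssim 1$, which follows from the CZ stopping property, since the $Q\subset\frac12J$ are disjoint with $|f|_Q\approx1$ and $J$ is not contained in a bad cube. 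Next, I would control $|b-b_Q|\leq|b-b_J|+Cs\Bnorm$. The key structural input is the stratification: on the stratum $\mathcal I^{\#,n}_{s,\iota}$, every point lies in at most $C_02^{ds}$ of the cubes $J^\iota$, because $J^\iota\not\subset F^n_{s,\iota}$ and all cubes lie in $F^{n-1}$. This only gives an overlap count of $2^{ds}$, so $\norm F_3^3$ could lose a factor $2^{3ds}$, and such a loss must be beaten by the $L^1$ gain, which requires $\kappa$ large relative to $d$. That is not available. I would therefore instead exploit the fact that, at each $x$, the cubes containing $x$ form a chain, and use $L^3$ duality with the Carleson packing of Lemma \ref{lem:relevant-cubes-carleson}, as in Lai's $L^2$/$L^\infty$ bound for the basic operator: the sum $\sum_J a_J\chi_{J}$ with $a_J\lesssim 1$ and $\sum_{J\subset A}|J|\lesssim 2^{ds}|A|$. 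In this way, after the John--Nirenberg bound (Corollary \ref{cor:bmo-Lp} with $p=3$ on each $J$), I aim for
\begin{equation*}
\norm F_3^3\lesssim 2^{C(\eta+\kappa\cdot0)s}2^{Cds}s^3\Bnorm^3 2^{-2n}\norm f_1 .
\end{equation*}

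\textbf{Combining the bounds.} We combine via $\norm F_2\leq\norm F_1^{1/4}\norm F_3^{3/4}$, which yields
\begin{equation*}
\norm F_2\lesssim 2^{-n}\,2^{s(-\kappa/4+C\eta+C'd)}\Bnorm\norm f_1^{1/2}.
\end{equation*}
Since a $2^{Cds}$ loss cannot be absorbed, the real task in the $L^3$ step is to show that the overlap loss is polynomial in $s$ or $2^{C\eta s}$. The route I would try first is to use that each $J^\iota$ has at most $u_0$ ancestors but that the coefficients $2^{-jd}\sum_Q\norm{h_Q}_1$ are governed by the $L^1$ density of $f$ in $J$, so that $\sum_{J\ni x}2^{-jd}\sum_Q\norm{h_Q}_1\lesssim \sup_r(\text{averages of }|h|)$, which is controlled through the maximal function at scales where $f$ has already been decomposed. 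Granting a loss of only $2^{C\eta s}$ in $L^3$, choosing $\eta\ll\kappa$ gives $\delta_1=\kappa/4-C\eta>0$.
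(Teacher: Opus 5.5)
\textbf{There is a gap: the $L^3$ bound is never established.} Your $L^1$ bound and your closing step $\norm{F}_2\le\norm{F}_1^{1/4}\norm{F}_3^{3/4}$ coincide with the paper's argument; the $L^1$ bound rests on the boundary layer of measure $\lesssim 2^{-s\kappa}|J|$, Corollary~\ref{cor:bmo-subset-integral}, $|b_J-b_Q|\lesssim s\Bnorm$, and Lemma~\ref{lem:layered-bad-mass}. The $L^3$ step is where it breaks. You correctly see that bounding the overlap of the $J$'s pointwise through the stratification costs $2^{Cds}$, and the $2^{-s\kappa/4}$ gain cannot absorb that. The packing of Lemma~\ref{lem:relevant-cubes-carleson} does no better, since its constant is itself $2^{ds}$. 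Your fallback also fails. Bounding $\sum_{J\ni x}|J|^{-1}\sum_{Q}\norm{h_Q}_1$ pointwise by a maximal function is false in general: the sum can pick up a contribution of order one from each of many nested scales. So the argument ends by ``granting'' the decisive estimate.

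\textbf{The missing idea.} Do not estimate $F$ pointwise. Dualize instead, and move the mass from the overlapping cubes $J$ to the pairwise disjoint bad cubes $Q$. Take $g\ge0$ with $\norm{g}_{3/2}=1$. Combine three facts:
\begin{itemize}
\item the size bound $|T_jh_Q|\lesssim 2^{\eta s}\norm{h_Q}_1|J|^{-1}\chi_J$;
\item H\"older's inequality on $J$ with exponents $6$ and $6/5$;
\item the John--Nirenberg bound $\big(|J|^{-1}\int_J|b-b_Q|^6\big)^{1/6}\lesssim s\Bnorm$.
\end{itemize}
Together they give
\begin{equation*}
\int_{\Rd}|F|g\lesssim s2^{\eta s}\Bnorm\sum_{J,Q}\norm{h_Q}_1\Big(\frac1{|J|}\int_Jg^{6/5}\Big)^{5/6}\le s2^{\eta s}\Bnorm\int_{\Rd}W\,M_{6/5}g,
\end{equation*}
where $M_{6/5}g=(M(g^{6/5}))^{5/6}$ and $W=\sum_{J,Q}\frac{\norm{h_Q}_1}{|Q|}\chi_Q$. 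The last inequality uses only $Q\subset J$: the $L^{6/5}$-average of $g$ over $J$ is at most $M_{6/5}g(y)$ for every $y\in Q$.

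\textbf{Why $W$ is harmless.} Each bad cube $Q$ is attached to at most one $J$, because the cubes $\frac12J$ of a fixed side length in $\D^{\vec w}$ are disjoint. The bad cubes are pairwise disjoint and satisfy $\norm{h_Q}_1\lesssim|Q|$. Hence $W\lesssim 1$, and
\begin{equation*}
\norm{W}_3\lesssim\Big(\sum_{J,Q}\norm{h_Q}_1\Big)^{1/3}\lesssim 2^{-2n/3}\norm{f}_1^{1/3}.
\end{equation*}
Since $M$ is bounded on $L^{5/4}$, you have $\norm{M_{6/5}g}_{3/2}\lesssim 1$. Therefore
\begin{equation*}
\norm{F}_3\lesssim s2^{\eta s}2^{-2n/3}\Bnorm\norm{f}_1^{1/3},
\end{equation*}
with no overlap loss at all.

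\textbf{Closing the argument.} Interpolating this with your $L^1$ bound gives
\begin{equation*}
\norm{F}_2\lesssim 2^{-n}s2^{-s(\kappa/4-\eta)}\Bnorm\norm{f}_1^{1/2}.
\end{equation*}
Choosing $\eta<\kappa/4$ and absorbing the factor $s$ proves the lemma.
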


\begin{lemma}
	\label{lem:sec42-low-frequency}
	There exists a constant $\delta_2>0$ such that
	\begin{align*}
		&\Big\|
		\sum_{J\in\mathcal I_{s,\iota}^{\#,n}}
		\sum_{Q\in\mathfrak{Q}_{J-s}}
		\varepsilon_J\big(b-b_Q\big)
		P_{j-s\kappa}\Big[
		\big(T_jh_Q\big)\widetilde\chi_{J^\iota}
		\Big]
		\Big\|_{L^2(\mathbb R^d)} \lesssim
		2^{-n}2^{-\delta_2s}
		\|b\|_{\operatorname{BMO}(\mathbb R^d)}
		\|f\|_{L^1(\mathbb R^d)}^{1/2}.
	\end{align*}
\end{lemma}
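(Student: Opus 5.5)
The plan is to follow the abstract: interpolate a localized $L^1$ bound that decays in $s$ against a uniform localized $L^3$ bound. Write
\[
F_{J,Q}\coloneqq (b-b_Q)\,P_{j-s\kappa}\big[(T_jh_Q)\widetilde\chi_{J^\iota}\big],\qquad F\coloneqq\sum_{J}\sum_{Q}\varepsilon_J F_{J,Q}.
\]
Each $F_{J,Q}$ is supported in a fixed dilate $CJ$. The gain should come from the smoothing $P_{j-s\kappa}$, which acts on the rough, oscillating function $T_jh_Q$. By the mean zero of $h_Q$ and the smoothness of $\varpi_{j-s\kappa}$, the composition $P_{j-s\kappa}T_j$ has a kernel that is smooth at scale $2^{j-s\kappa}$. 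The cutoff $\widetilde\chi_{J^\iota}$ is also smooth at that scale. Commuting it past $P_{j-s\kappa}$ costs nothing, since derivatives of $\widetilde\chi_{J^\iota}$ are $O(2^{-(j-s\kappa)})$.

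\textbf{Step 1 ($L^1$ decay).} Use the cancellation $\int h_Q=0$ together with $\|\nabla(\varpi_{j-s\kappa}*\mathcal K_j)\|_{L^1}\lesssim 2^{\eta s}2^{-(j-s\kappa)}$ and $\ell(Q)=2^{j-s}$. This gives
\[
\big\|P_{j-s\kappa}\big[(T_jh_Q)\widetilde\chi_{J^\iota}\big]\big\|_{L^\infty}\lesssim 2^{\eta s}2^{-s(1-\kappa)}2^{-jd}\|h_Q\|_1,
\]
up to the harmless commutator error from $\widetilde\chi_{J^\iota}$. Integrating against $|b-b_Q|$ on $CJ$ costs $|J|\,s\Bnorm$, using $|b_{CJ}-b_Q|\lesssim s\Bnorm$ and Corollary \ref{cor:bmo-Lp}. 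Summing over $J,Q$ with Lemma \ref{lem:layered-bad-mass} yields
\[
\|F\|_{L^1}\lesssim s\,2^{-s(1-\kappa-\eta)}\Bnorm\,2^{-2n}\|f\|_1.
\]

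\textbf{Step 2 (uniform $L^3$-type bound).} We need $\|F\|_{L^3}^3$ bounded by $2^{Cs\epsilon}\Bnorm^3 2^{-2n}\|f\|_1$ with $C\epsilon$ much smaller than $1-\kappa$. Pointwise, $|P_{j-s\kappa}[(T_jh_Q)\widetilde\chi]|\lesssim 2^{\eta s}2^{-jd}\|h_Q\|_1$ without cancellation. For fixed $x$ the relevant cubes $J^\iota$ containing $x$ form a chain. By the stratification into $\mathcal I^{\#,n}_{s,\iota}$, at most $u_0\lesssim 2^{ds}$ of them can stack. That count is too lossy, so I would instead use the Carleson bound of Lemma \ref{lem:relevant-cubes-carleson} and the fact that $\sum_{Q\subset J}\|h_Q\|_1\lesssim|J|$ (each $Q$ has $|h_Q|$-average at most $2$). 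Together these give $\sum_J |J|^{-1}\sum_Q\|h_Q\|_1\chi_{CJ}$ bounded in a BMO/$L^\infty$-on-average sense. A John--Nirenberg argument for this counting function (as in Lai) then gives control of the $L^3$ norm by $2^{O(\eta s)}$ times the $L^1$ mass, while the $b-b_Q$ factors are handled in $L^p$ by Corollary \ref{cor:bmo-Lp}.

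\textbf{Step 3 (interpolation).} Apply Hölder, $\|F\|_2\le\|F\|_1^{1/4}\|F\|_3^{3/4}$, which is the log-convex form of Proposition \ref{prop:L1-L3-interpolation}. With $\eta$ and $\kappa$ small this gives
\[
\|F\|_2\lesssim 2^{-s(1-\kappa-C\eta)/4}\Bnorm\,2^{-n}\|f\|_1^{1/2}.
\]
The powers of $2^{-2n}$ combine to $2^{-n}$, and the result holds with $\delta_2=(1-\kappa-C\eta)/8$.

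\textbf{Main obstacle.} The hardest step is Step 2. Controlling overlapping supports of the $F_{J,Q}$ across scales with only polynomial-in-$s$ loss requires the layered structure of $\mathcal I^{\#,n}_{s,\iota}$. Handling the unbounded weight $b-b_Q$ with $Q$-dependent constants requires telescoping $b_Q\to b_J$ at a cost of $s$. If the direct $L^3$ approach fails, my fallback is to replace $L^3$ by an $L^2$ bound without decay, obtained via almost-orthogonality in $J$, and then interpolate with the $L^1$ decay through Lemma \ref{lem:orlicz-local-interpolation} on each $J$.
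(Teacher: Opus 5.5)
Your overall architecture matches the paper's. The paper proves an $L^1$ bound with decay $s\,2^{-s(1-\kappa)}\|\Omega\|_{\infty}\Bnorm\|h_Q\|_1$, using the mean zero of $h_Q$ and one derivative at scale $2^{j-s\kappa}$. It pairs this with a loss-free $L^3$ bound and then uses $\|F\|_2\le\|F\|_1^{1/4}\|F\|_3^{3/4}$. Your Steps 1 and 3 are fine. Bounding $F_{J,Q}$ pointwise and then integrating $|b-b_Q|$ over $CJ$ is slightly simpler than the paper's weighted $\varpi$-estimates.

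Two small corrections in Step 1. First, you need the $L^\infty$ norm of $\nabla(\varpi_{j-s\kappa}*\mathcal K_j)$, which is $\lesssim 2^{\eta s}2^{-jd}2^{-(j-s\kappa)}$, not its $L^1$ norm. Second, the commutator $[P_{j-s\kappa},\widetilde\chi_{J^\iota}]$ does not ``cost nothing''. Its kernel $\varpi_{j-s\kappa}(x-w)\bigl(\widetilde\chi_{J^\iota}(w)-\widetilde\chi_{J^\iota}(x)\bigr)$ is of the same size as $\varpi_{j-s\kappa}$. It is harmless only because it enjoys the same cancellation gain. The clean route, as in the paper, is to differentiate the composite kernel $\int\varpi_{j-s\kappa}(x-y-z)\mathcal K_j(z)\widetilde\chi_{J^\iota}(y+z)\,dz$ in $y$. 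This produces a $\nabla\varpi_{j-s\kappa}$ term and a $\nabla\widetilde\chi_{J^\iota}$ term, each of the required size.

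The genuine gap is Step 2, and the tools you name would not close it. Set $A_J:=|J|^{-1}\sum_Q\|h_Q\|_1\lesssim 1$. Lemma \ref{lem:relevant-cubes-carleson} has packing constant $2^{ds}$, so combining it with $A_J\lesssim1$ only gives Carleson constant $2^{ds}$ for the weighted family. That yields roughly $\|F\|_3\lesssim 2^{2ds/3}(\cdots)$, and after the power $3/4$ the factor $2^{ds/2}$ swamps the gain $2^{-s(1-\kappa)/4}$. The layered structure of $\mathcal I^{\#,n}_{s,\iota}$ does not rescue this either; it enters only through Lemma \ref{lem:layered-bad-mass}.

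The missing observation is that there is no overlap loss at all, for three reasons:
\begin{itemize}
\item each bad cube $Q$ determines its $J$ uniquely, since $\ell(J)=2^s\ell(Q)$, $Q\subset\frac12J$, and the cubes $\frac12J$ of a fixed scale in $\D^{\vec w}$ are disjoint;
\item the bad cubes are pairwise disjoint;
\item $\|h_Q\|_1\lesssim|Q|$.
\end{itemize}
The paper exploits this by duality. Take $g\ge0$ with $\|g\|_{3/2}=1$. Use the pointwise bound $|P_{j-s\kappa}[(T_jh_Q)\widetilde\chi_{J^\iota}]|\lesssim\|\Omega\|_\infty|J|^{-1}\|h_Q\|_1\chi_{CJ}$, and H\"older with $\bigl(|CJ|^{-1}\int_{CJ}|b-b_Q|^6\bigr)^{1/6}\lesssim s\Bnorm$. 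This gives
\[
\int|F|\,g\lesssim s\|\Omega\|_{\infty}\Bnorm\sum_{J,Q}\|h_Q\|_1\inf_{Q}M_{6/5}g\leq s\|\Omega\|_{\infty}\Bnorm\int W\,M_{6/5}g,
\qquad W=\sum_{J,Q}\frac{\|h_Q\|_1}{|Q|}\chi_Q\lesssim1 .
\]
Since $\|W\|_3^3\le\|W\|_\infty^2\|W\|_1$ and $M_{6/5}$ is bounded on $L^{3/2}$, this gives $\|F\|_3\lesssim s\|\Omega\|_\infty\Bnorm\bigl(\sum_{J,Q}\|h_Q\|_1\bigr)^{1/3}$. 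The same argument handles the overlapping, $Q$-dependent weights $b-b_Q$, which per-cube moment bounds from Corollary \ref{cor:bmo-Lp} do not control on their own.

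Your fallback also fails. An $L^2$ bound without decay cannot be interpolated with $L^1$ to give decay in $L^2$. Lemma \ref{lem:orlicz-local-interpolation} only outputs $L^q$ with $q<p$, so you need some exponent above $2$ in any case.
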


\begin{lemma}
	\label{lem:sec42-main-microlocal}
	There exists a constant $\delta_3>0$ such that
	\begin{align*}
		& \Big\| \sum_{J\in\mathcal I_{s,\iota}^{\#,n}}
		\sum_{Q\in\mathfrak{Q}_{J-s}}
		\sum_{\nu}
		G_\nu \Big[
		\big(b-b_Q\big)
		\big(I-P_{j-s\kappa}\big)\Big[ \varepsilon_J
		\big(T_{j,\nu} h_Q\big)\widetilde\chi_{J^\iota}
		\Big]
		\Big]
		\Big\|_{L^2(\mathbb R^d)} \\
		& \qquad \lesssim 2^{-n}2^{-\delta_3s}
		\|b\|_{\operatorname{BMO}(\mathbb R^d)}
		\|f\|_{L^1(\mathbb R^d)}^{1/2}.
	\end{align*}
\end{lemma}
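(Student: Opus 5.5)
Since $\sum_\nu$ sits outside the multipliers $G_\nu$, we plan a $TT^*$-type almost-orthogonality argument in the directional index $\nu$. Write
\begin{equation*}
	F_\nu\coloneqq\sum_{J\in\mathcal I_{s,\iota}^{\#,n}}\sum_{Q\in\mathfrak Q_{J-s}}
	(b-b_Q)\,(I-P_{j-s\kappa})\big[\varepsilon_J(T_{j,\nu}h_Q)\widetilde\chi_{J^\iota}\big],
\end{equation*}
so the quantity to bound is $\|\sum_\nu G_\nu F_\nu\|_2$. The multipliers $\Psi(2^{s\gamma}\langle e_\nu,\xi/|\xi|\rangle)$ are supported in the band $|\langle e_\nu,\xi/|\xi|\rangle|\le 4\cdot2^{-s\gamma}$, a neighbourhood of a great sphere of width $2^{-s\gamma}$. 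A fixed direction $\xi/|\xi|$ lies in $\lesssim 2^{s\gamma(d-2)}$ such bands, since the $e_\nu$ with $|\langle e_\nu,\omega\rangle|\lesssim 2^{-s\gamma}$ form a $2^{-s\gamma}$-net of a codimension-one band in $\Sd$. Cauchy--Schwarz on the Fourier side then gives
\begin{equation*}
	\Big\|\sum_\nu G_\nu F_\nu\Big\|_2^2\lesssim 2^{s\gamma(d-2)}\sum_\nu\|G_\nu F_\nu\|_2^2 .
\end{equation*}
This alone costs $2^{s\gamma(d-2)}$ against a sum over $2^{s\gamma(d-1)}$ directions. The gain therefore has to come from the fact that $G_\nu$ nearly annihilates $F_\nu$. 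Without the factor $b-b_Q$, the function $(T_{j,\nu}h_Q)\widetilde\chi_{J^\iota}$ has Fourier transform concentrated (modulo smooth tails, by the regularity $|\partial^\alpha\widetilde\chi_{J^\iota}|\lesssim 2^{-(j-s\kappa)|\alpha|}$) near the Fourier transform of the kernel $\mathcal K_{j,\nu}$. That kernel is a thin sector of width $2^{j-s\gamma}$ in direction $e_\nu$, and its Fourier transform decays rapidly in $2^j|\langle e_\nu,\xi\rangle|$ once $|\xi|\gg 2^{-j+s\kappa}$, which is the region kept by $I-P_{j-s\kappa}$. On the support of $G_\nu$ we have $|\langle e_\nu,\xi\rangle|\le 4\cdot 2^{-s\gamma}|\xi|$. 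Seeger's stationary-phase/integration-by-parts argument along $e_\nu$, using the mean-zero bad functions $h_Q$ at scale $2^{j-s}$ and $\|\Omega\|_\infty\le 2^{\eta s}$, should give
\begin{equation*}
	\|G_\nu(I-P_{j-s\kappa})[(T_{j,\nu}h_Q)\widetilde\chi_{J^\iota}]\|_2^2\lesssim 2^{-cs(\kappa-\gamma)}2^{2\eta s}2^{-jd}\|h_Q\|_1^2 .
\end{equation*}

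The factor $b-b_Q$ has no frequency localization, so we do not commute it through $G_\nu$; that job belongs to the separate $[b,G_\nu]$ term. Instead we treat $G_\nu[(b-b_Q)\cdot]$ through its adjoint kernel. Expand
\begin{equation*}
	\|G_\nu F_\nu\|_2^2=\langle F_\nu, G_\nu^2F_\nu\rangle
\end{equation*}
as a double sum over pairs $(J,Q)$, $(J',Q')$. We separate pairs at comparable scales $|j-j'|\le Cs$ from far-apart scales. For far-apart scales, the frequency supports of $(I-P_{j-s\kappa})$ pieces at very different scales, combined with the smallness of $\widehat{\mathcal K_{j,\nu}}$ off scale, give geometric decay $2^{-|j-j'|/2}$. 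For near-diagonal pairs we use Cauchy--Schwarz, the kernel bound above, and the BMO size of $b-b_Q$ on the support $\sim J$. Corollary~\ref{cor:bmo-Lp} with $p$ large, together with $|b_J-b_Q|\lesssim s\Bnorm$, controls that size, at a cost of only a factor $s$ and a slight loss of decay through Hölder. Where Cauchy--Schwarz would lose too much, we follow the paper's strategy: interpolate, via Proposition~\ref{prop:L1-L3-interpolation}, a decaying $L^1$ bound against a uniform $L^3$ bound, using Lemma~\ref{lem:orlicz-local-interpolation} to pass from an $L^1$ smallness of the localized pieces to $L^q$.

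Next we sum over $J$ and $Q$. The overlap of the cubes $J^\iota$ in a single stratum is bounded by $u_0\approx 2^{ds}$ only pointwise, which is too crude. Instead we use that for $x\notin F^n_{s,\iota}$ the counting function is $\le C_02^{ds}$, together with the Carleson bound of Lemma~\ref{lem:relevant-cubes-carleson}, and then Lemma~\ref{lem:layered-bad-mass}. The latter converts $\sum\|h_Q\|_1$ into $2^{-2n}\|f\|_1$. The resulting $2^{-jd}\|h_Q\|_1^2\le\|h_Q\|_1$ conversion uses $|h_Q|_Q\lesssim 1$ (Calderón--Zygmund) and $|J|=2^{sd}|Q|$. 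The loss $2^{sd}$ must be absorbed by the Seeger decay. This is achieved by choosing the integration-by-parts order $N$ large relative to $d/(\kappa-\gamma)$ and $\eta$ small.

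Altogether this should yield $2^{-2n}2^{-2\delta_3 s}\Bnorm^2\|f\|_1$ for the square, as required. \textbf{The main obstacle} is the near-diagonal estimate: showing that inserting the unbounded multiplier $b-b_Q$ between $(I-P_{j-s\kappa})$ and $G_\nu$ does not destroy the directional cancellation. If Cauchy--Schwarz with $L^p$ BMO bounds loses too much, the fallback is to replace $b-b_Q$ on each $J$ by its truncation at height $s^2\Bnorm$. The John--Nirenberg tail (Theorem~\ref{thm:john-nirenberg}) is then handled crudely in $L^1$, and only the bounded part goes through the $TT^*$ argument.
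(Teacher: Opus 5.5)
\textbf{There is a genuine gap: the source of decay you propose is backwards.} Your first reduction, $\|\sum_\nu G_\nu F_\nu\|_2^2\lesssim 2^{s\gamma(d-2)}\sum_\nu\|G_\nu F_\nu\|_2^2$, is correct and is essentially the paper's first step. The problem comes after that.

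The radial integral $\int_0^\infty \varphi_j(r)r^{-1}e^{-ir\langle\theta,\xi\rangle}\,dr$ in $\widehat{\mathcal K_{j,\nu}}$ decays in $2^j|\langle\theta,\xi\rangle|$. To use this you need a \emph{lower} bound $|\langle\theta,\xi\rangle|\gtrsim 2^{-s\gamma}|\xi|$ for $\theta\in E_\nu$. That bound holds on the support of $1-\Psi(2^{s\gamma}\langle e_\nu,\xi/|\xi|\rangle)$. On the support of $G_\nu$ you only have the upper bound $|\langle e_\nu,\xi\rangle|\le 4\cdot 2^{-s\gamma}|\xi|$, and $\langle\theta,\xi\rangle$ can vanish. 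In fact, for $|\xi|\gg 2^{-j+s\gamma}$ the mass of $\widehat{\mathcal K_{j,\nu}}$ sits where $|\langle\theta,\xi\rangle|\lesssim 2^{-j}$, which is inside the cone on which $G_\nu$ is the identity. It is $I-G_\nu$, not $G_\nu$, that is small on these pieces; this is exactly what Lemma~\ref{lem:sec42-remainder-microlocal} exploits by integrating by parts in $r$. So your bound for $\|G_\nu(I-P_{j-s\kappa})[\cdots]\|_2^2$ cannot be proved this way.

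It would not help even if it held. For a single pair $(J,Q)$, the trivial estimate $\|\mathcal K_{j,\nu}\|_2^2\lesssim 2^{-jd-s\gamma(d-1)}\|\Omega\|_\infty^2$ already gives decay with no Fourier analysis. Diagonal terms give $2^{-jd}\|h_Q\|_1^2\lesssim 2^{-sd}\|h_Q\|_1$, a gain rather than a loss of $2^{sd}$. The whole difficulty is the interaction between different bad cubes. Expanding $\langle F_\nu,G_\nu^2F_\nu\rangle$ with the nonlocal $G_\nu^2$ in the middle discards the spatial localization needed to control that interaction.

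\textbf{The missing idea is that the decay is geometric, and $G_\nu$ enters only through almost-orthogonality.} Discard $G_\nu$ (via $\|G_\nu F_\nu\|_2\le\|F_\nu\|_2$, or by duality as in the paper) and prove
\[
\|F_\nu\|_2^2\lesssim s^2\|\Omega\|_\infty^2\|b\|_{\BMO}^2\,2^{-2\epsilon s\gamma(d-1)}\sum_{J,Q}\|h_Q\|_1
\]
with $\epsilon$ close to $1$. The total factor $2^{s\gamma(d-2)}\cdot 2^{s\gamma(d-1)}\cdot 2^{-2\epsilon s\gamma(d-1)}$ then decays once $2\epsilon(d-1)>2d-3$, so the factor $2^{s\gamma(d-2)}$ needs no extra compensation.

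To get this bound:
\begin{enumerate}[(i)]
\item Expand $\|F_\nu\|_2^2$ and restrict by symmetry to $\ell(I)\le\ell(J)$. You need no near/far scale splitting and no cross-scale frequency orthogonality; the latter is destroyed by $b-b_Q$ anyway.
\item The kernel of $(I-P_{j-s\kappa})[(\mathcal K_{j,\nu}*\cdot)\widetilde\chi_{J^\iota}]$ is bounded by $2^{-jd}\|\Omega\|_\infty\chi_{E_{j,\nu}}(x-y)$, with $|E_{j,\nu}|\approx 2^{jd-s\gamma(d-1)}$.
\item Every bad cube $R$ interacting with $y\in Q$ lies in $y+2E_{j,\nu}$. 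These cubes are disjoint with $\|h_R\|_1\lesssim|R|$, so their total mass is $\lesssim 2^{jd-s\gamma(d-1)}$.
\item Combined with Corollary~\ref{cor:bmo-subset-integral} on the sectors $z+E_{i,\nu}$, this gives the bound $s\|\Omega\|_\infty\|b\|_{\BMO}2^{jd-2s\gamma(d-1)}$ in $L^1(16J)$ for the function $S^\nu_J(\cdot,y)$ carrying the inner factor $|b-b_R|$.
\item The outer factor $|b-b_Q|$ is handled by H\"older in $L^{q'}\times L^q$. This requires upgrading the $L^1$ decay of $S^\nu_J$ to $L^q$, via a uniform $L^2(\log L)^{-2}$ bound and Lemma~\ref{lem:orlicz-local-interpolation}. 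This is where $\epsilon<1$ comes from.
\end{enumerate}

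Your outline never isolates this sector-mass counting, and the truncation fallback does not replace it.
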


\begin{lemma}
	\label{lem:sec42-commutator-microlocal}
	There exists a constant $\delta_4>0$ such that
	\begin{align*}
		& \Big\|
		\sum_{J\in\mathcal I_{s,\iota}^{\#,n}}
		\sum_{Q\in\mathfrak{Q}_{J-s}}
		\sum_{\nu} [b,G_\nu] \big(I-P_{j-s\kappa}\big)\Big[ \varepsilon_J
		\big(T_{j,\nu} h_Q\big)\widetilde\chi_{J^\iota}
		\Big]
		\Big\|_{L^2(\mathbb R^d)} \\
		& \qquad \lesssim 2^{-n}2^{-\delta_4s}
		\|b\|_{\operatorname{BMO}(\mathbb R^d)}
		\|f\|_{L^1(\mathbb R^d)}^{1/2}.
	\end{align*}
\end{lemma}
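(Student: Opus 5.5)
The statement to prove is Lemma \ref{lem:sec42-commutator-microlocal}, the estimate for the commutator term $[b,G_\nu]$.

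\emph{Notation and strategy.} Write
\[
u_{J,Q,\nu}\coloneqq \bigl(I-P_{j-s\kappa}\bigr)\bigl[\varepsilon_J (T_{j,\nu}h_Q)\widetilde\chi_{J^\iota}\bigr],\qquad U_\nu\coloneqq\sum_{J}\sum_{Q}u_{J,Q,\nu},
\]
so the quantity to bound is $\|\sum_\nu [b,G_\nu]U_\nu\|_2$. I would use the Coifman--Rochberg--Weiss analytic conjugation. For $z\in\mathbb C$ set
\[
G_\nu^z g\coloneqq e^{zb}G_\nu(e^{-zb}g),\qquad\text{so that}\qquad [b,G_\nu]=\frac{d}{dz}G_\nu^z\Big|_{z=0}.
\]
By Cauchy's formula on a circle $|z|=r$ with $r\Bnorm\approx c_{d,2}$,
\[
\Big\|\sum_\nu[b,G_\nu]U_\nu\Big\|_2\lesssim \Bnorm\sup_{|z|=r}\Big\|\sum_\nu G_\nu^zU_\nu\Big\|_2 .
\]
So it suffices to show $\|\sum_\nu G_\nu^z U_\nu\|_2\lesssim 2^{-n}2^{-\delta_4 s}\|f\|_1^{1/2}$ uniformly on the circle. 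Writing $w_z=e^{2\operatorname{Re}(z)b}$, this is the same as $\|\sum_\nu G_\nu(e^{-zb}U_\nu)\|_{L^2(w_z)}$, and Lemma \ref{lem:bmo-exponential-weight} gives $[w_z]_{A_2}\lesssim1$.

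\emph{Key steps.}

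First, the multiplier of $G_\nu$ is homogeneous of degree zero, but its derivatives carry factors $2^{s\gamma|\alpha|}$. Lemma \ref{lem:hormander-mihlin-weighted-sec3} therefore gives only $\|G_\nu\|_{L^2(w)\to L^2(w)}\lesssim 2^{C s\gamma}$ for $A_2$ weights, while the unweighted bound is $1$. The sum over $\nu$ must also be handled. In the unweighted setting the cones $\{|\langle e_\nu,\xi/|\xi|\rangle|\le 2^{2-s\gamma}\}$ overlap boundedly in high dimension only after a Seeger-type count. I would avoid this issue by interpolating. At the endpoint $\theta=0$ take weight $1$ and prove the decay estimate for $\sum_\nu G_\nu U_\nu$; that is exactly the unconjugated principal term. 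At $\theta=1$ take a larger disk, $|z|\le 2r$ (still admissible by Lemma \ref{lem:bmo-exponential-weight}), and accept a crude bound with loss $2^{Cs\gamma}$ times the trivial $L^2$ size. Stein--Weiss interpolation (Theorem \ref{thm:stein-weiss}, Remark \ref{rem:stein-weiss-p2}) on the family $z\mapsto \sum_\nu G_\nu^{z}U_\nu$ then yields the bound on $|z|=r$ with a geometric mean of the two endpoint bounds.

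Second, for the decaying endpoint I would follow Lemma \ref{lem:sec42-main-microlocal} without the factor $(b-b_Q)$. Since $G_\nu$ is an $L^2$-bounded, orthogonal-ish projection onto the cone, a $TT^*$ argument applies. The kernel of $T_{j,\nu}$ is supported in directions near $e_\nu$, so $\widehat{\mathcal K_{j,\nu}}$ restricted to the cone $|\langle e_\nu,\xi/|\xi|\rangle|\lesssim2^{-s\gamma}$ at frequencies $|\xi|\gtrsim 2^{-j+s\kappa}$ gains decay $2^{-s(\kappa-\gamma)}$-type from integration by parts along $e_\nu$. Combined with the $\|\Omega\|_\infty\le 2^{\eta s}$ bound, the cancellation of $h_Q$, and Lemma \ref{lem:layered-bad-mass}, this gives $2^{-n}2^{-c s}\|f\|_1^{1/2}$.

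Third, for the crude endpoint I would bound $\|\sum_\nu G^z_\nu U_\nu\|_{L^2}$ by the triangle inequality over the $\lesssim 2^{s\gamma(d-1)}$ directions. Each term is controlled by weighted Mikhlin, and each $U_\nu$ is estimated by the localized $L^1$/$L^3$ interpolation (Proposition \ref{prop:L1-L3-interpolation}, Lemma \ref{lem:orlicz-local-interpolation}) using $\|\Omega\|_\infty\le2^{\eta s}$ and the layered mass. The result is $2^{Cs(\gamma+\eta)}2^{-n}\|f\|_1^{1/2}$. Interpolating with parameter $\theta=1/2$ gives $2^{-n}2^{-s(c/2-C(\gamma+\eta)/2)}$, which is positive decay once $\gamma,\eta$ are chosen small relative to $\kappa$.

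\emph{Main obstacle.} The delicate point is the decaying endpoint, specifically almost-orthogonality in $\nu$: the sum over $\nu$ must not cost $2^{s\gamma(d-1)}$. I would first try the $TT^*$ computation of Seeger and Lai, using the finite overlap $\sum_\nu|\Psi(2^{s\gamma}\langle e_\nu,\xi/|\xi|\rangle)|^2\lesssim 2^{s\gamma(d-2)}$ balanced against the stationary-phase decay of $\widehat{\mathcal K_{j,\nu}}$. Keeping the exponents consistent with the interpolation loss is the crux.
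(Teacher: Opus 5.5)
Your proposal has the right ingredients: Coifman--Rochberg--Weiss conjugation, Stein--Weiss change of weights, and a Seeger--Lai $TT^*$ bound. The way you combine them, however, has two genuine gaps.

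\textbf{First gap: you interpolate the wrong object.} The smallness of $\sum_\nu G_\nu U_\nu$ at $z=0$ is not an operator bound for $G_\nu$. It comes from the special structure of $U_\nu$: the directional support of $\mathcal K_{j,\nu}$ and the disjointness of the bad cubes.
\begin{itemize}
\item Stein--Weiss interpolates bounds for one fixed linear operator. After conjugation the input becomes $e^{-zb}U_\nu$, which has none of that structure.
\item You also cannot interpolate in $z$ directly. If $\|F(0)\|\le A$ and $\|F(z)\|\le B$ on $|z|\le 2r$, Schwarz's lemma gives only $\|F(z)\|\le A+B|z|/r$. So there is no improvement over $B$ on $|z|=r$; take $F(z)=Bz/(2r)$. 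A three-circles argument needs the small bound on a whole boundary circle, not at a single point.
\end{itemize}
The paper avoids this by decoupling operator and input through duality. Since $G_\nu$ is self-adjoint, $\langle\mathcal E_{s,n,\iota},u\rangle=-\sum_\nu\langle u_\nu,[b,G_\nu]u\rangle$. Cauchy--Schwarz in $\nu$ then reduces matters to two separate facts:
\begin{itemize}
\item the operator square-function bound $\sum_\nu\|[b,G_\nu]u\|_2^2\lesssim 2^{s\gamma(d-3/2)}\|b\|_{\BMO}^2\|u\|_2^2$, to which CRW applies cleanly because no input structure is involved;
\item Lai's per-direction estimate $\|u_\nu\|_2^2\lesssim 2^{-2s\gamma(d-1)}\|\Omega\|_{L^\infty}^2\sum_{J,Q}\|h_Q\|_1$, summed over the $\lesssim 2^{s\gamma(d-1)}$ directions.
\end{itemize}

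\textbf{Second gap: the exponents do not close.} The gain in this term is only of order $s\gamma$. It is the gap between the cone overlap $2^{s\gamma(d-2)}$ and the directional gain $2^{-s\gamma(d-1)}$, i.e.\ $2^{-s\gamma}$ in the squared norm.

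Your claimed $2^{-s(\kappa-\gamma)}$ from integrating by parts along $e_\nu$ is not available here. That gain needs $|\langle\theta,\xi\rangle|\gtrsim 2^{-s\gamma}|\xi|$, so it lives off the cone; it is what handles $I-G_\nu$ in Lemma~\ref{lem:sec42-remainder-microlocal}. On the support of the multiplier of $G_\nu$, $\langle\theta,\xi\rangle$ may vanish.

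The weighted Mikhlin endpoint costs $2^{s\gamma(d-1+2N)}$. Interpolating with parameter $\theta$ therefore gives net exponent $s\gamma\,[-1+\theta(2N+1)]$ in the square. This is positive at $\theta=1/2$. Shrinking $\gamma,\eta$ relative to $\kappa$ cannot help, because gain and loss both scale with $\gamma$. You need $\theta<1/(2N+1)$.

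The paper achieves this with a lopsided interpolation: the unweighted bound against the weight $w^{1+a_0}$, with $a_0=4d+4$ and $\theta_0=1/(1+a_0)$. This is legitimate because $w_z^{1+a_0}=e^{2(1+a_0)\operatorname{Re}(z)b}$ remains uniformly in $A_2$ for $|z|\lesssim\|b\|_{\BMO}^{-1}$, at the cost of a dimensional factor in the radius. It yields the exponent $a_*<d-\tfrac32$ and the final decay $2^{-n}2^{-s(\gamma/4-\eta)}$.
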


\begin{lemma}
	\label{lem:sec42-remainder-microlocal}
	There exists a constant $\delta_5>0$ such that
	\begin{align*}
		& \Big\|
		\sum_{J\in\mathcal I_{s,\iota}^{\#,n}}
		\sum_{Q\in\mathfrak{Q}_{J-s}}
		\sum_{\nu} (b-b_Q)
		\big(I-G_\nu \big)
		\big(I-P_{j-s\kappa}\big)\Big[ \varepsilon_J
		\big(T_{j,\nu} h_Q\big)\widetilde\chi_{J^\iota}
		\Big]
		\Big\|_{L^2(\mathbb R^d)} \\
		& \qquad \lesssim 2^{-n}2^{-\delta_5s}
		\|b\|_{\operatorname{BMO}(\mathbb R^d)}
		\|f\|_{L^1(\mathbb R^d)}^{1/2}.
	\end{align*}
\end{lemma}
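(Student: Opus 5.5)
The plan is to prove Lemma \ref{lem:sec42-remainder-microlocal} by establishing two estimates for the operator $\mathcal R$ whose $L^2$ norm appears there. The first is a localized $L^1$ bound with substantial decay in $s$. The second is a localized $L^3$ (or $L^2$ with weights) bound that loses at most a small power $2^{C\eta s}$. Proposition \ref{prop:L1-L3-interpolation}, or Lemma \ref{lem:orlicz-local-interpolation} applied cube by cube, then converts these into the required $2^{-\delta_5 s}$ decay.

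The key structural fact is that $(I-G_\nu)$ removes frequencies $\xi$ with $|\langle e_\nu,\xi/|\xi|\rangle|\le 2\cdot 2^{-s\gamma}$. On the remaining frequencies the phase of $\widehat{\mathcal K_{j,\nu}}$ is nonstationary in the radial variable along $e_\nu$. Combined with $(I-P_{j-s\kappa})$, which restricts to $|\xi|\gtrsim 2^{-j+s\kappa}$, this should give
\begin{equation*}
	\bigl|\widehat{\mathcal K_{j,\nu}}(\xi)\bigr|\bigl(1-\Psi(2^{s\gamma}\langle e_\nu,\xi/|\xi|\rangle)\bigr)\bigl(1-\widehat{\varpi}(2^{j-s\kappa}\xi)\bigr)
	\lesssim_N 2^{\eta s}2^{-s\gamma(d-1)}\,2^{-N s(\kappa-\gamma)} .
\end{equation*}
This follows by integrating by parts in the radial direction, since $\gamma<\kappa$.

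The first step is to convert this multiplier decay into kernel decay. The product $\widetilde\chi_{J^\iota}\cdot$ and the operators $(I-P_{j-s\kappa})$ and $(I-G_\nu)$ do not commute. However, $\widetilde\chi_{J^\iota}$ has derivatives of size $2^{-(j-s\kappa)}$, so its Fourier transform lives essentially at $|\xi|\lesssim 2^{-j+s\kappa}$, which is below the frequency threshold up to tails. I therefore first commute $\widetilde\chi_{J^\iota}$ past $(I-P_{j-s\kappa})$ at the cost of an error estimated like Lemma \ref{lem:sec42-low-frequency}. This yields a kernel $K_{J,\nu}$ such that the operator $h_Q\mapsto (I-G_\nu)(I-P_{j-s\kappa})[(T_{j,\nu}h_Q)\widetilde\chi_{J^\iota}]$ has $L^1\to L^1$ norm at most $2^{-c s}$, with rapid decay away from a $2^{j+1}$-neighbourhood of $J$. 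Summing over the $\lesssim 2^{s\gamma(d-1)}$ directions $\nu$ costs a factor $2^{s\gamma(d-1)}$, which is absorbed by choosing $N$ large.

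The second step handles the factor $(b-b_Q)$, which sits outside the operator. On the core $2J$, Corollary \ref{cor:bmo-subset-integral} and the bound $|b_J-b_Q|\lesssim s\Bnorm$ control it in $L^1$. The result is an $L^1$ bound
\begin{equation*}
	\|\mathcal R\|_{L^1}\lesssim s\,2^{-cs}\Bnorm\sum_J\sum_{Q}\|h_Q\|_1\lesssim 2^{-c's}2^{-2n}\Bnorm\|f\|_1,
\end{equation*}
using Lemma \ref{lem:layered-bad-mass}. The tails away from $2J$ are handled by a dyadic-annuli summation with $|b_{2^kJ}-b_Q|\lesssim (k+s)\Bnorm$.

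The third step is the uniform bound. Since $\|\Omega\|_\infty\le 2^{\eta s}$, each $T_{j,\nu}h_Q$ is bounded by $2^{\eta s}|Q|^{-1}$ times the mass of $h_Q$. The disjoint-or-nested structure of $\mathcal I^{\#,n}_{s,\iota}$ bounds the overlap by $u_0=C_02^{ds}$ only in the worst case; instead I will use the Carleson bound of Lemma \ref{lem:relevant-cubes-carleson} together with $|f|_Q\lesssim1$. I then put $(b-b_Q)$ in $L^6$ via Corollary \ref{cor:bmo-Lp} and the weighted Mikhlin theorem (Lemma \ref{lem:hormander-mihlin-weighted-sec3}), which applies because $(I-G_\nu)(I-P)$ is a Mikhlin multiplier with constant $2^{C s\gamma}$. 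This gives an $L^3$-type bound of size $2^{C(\eta+\gamma)s}2^{-n}\Bnorm\|f\|_1^{1/3}$.

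Interpolating with weight $1/4$ on the $L^1$ side then gives decay provided $c'/4>3C(\eta+\gamma)/4$, which holds after fixing $\gamma$, $\kappa$ and $\eta$ small with $N$ large.

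The main obstacle is the third step: obtaining the uniform bound without losing a factor $2^{ds}$ from overlapping cubes while $b-b_Q$ varies with $Q$. If the direct approach fails, I will write $b-b_Q=(b-b_J)+(b_J-b_Q)$. The second piece is then a constant of size $s$ per cube, and the first can be handled cube by cube using the nested generations $\mathcal M_u$ together with Lemma \ref{lem:F-measure-decay}.
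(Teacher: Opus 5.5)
\textbf{There is a genuine gap in the $L^1$ step, which is the heart of the lemma.} Your architecture matches the paper: an $L^1$ bound with decay in $s$, an $L^3$ bound losing only $2^{C(\gamma+\eta)s}$, then Proposition \ref{prop:L1-L3-interpolation}. The $L^1$ step fails as described, for three reasons.

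\emph{The filter is soft.} The symbol $1-\widehat\varpi(2^{j-s\kappa}\xi)$ is only $\lesssim\min\{1,2^{j-s\kappa}|\xi|\}$. So $I-P_{j-s\kappa}$ does not restrict to $|\xi|\gtrsim2^{-j+s\kappa}$, and your displayed multiplier bound fails below that scale.

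\emph{The cutoff cannot be commuted out.} $\widetilde\chi_{J^\iota}=P_{j-s\kappa}\chi_{J^\iota}$ varies at exactly the scale $2^{j-s\kappa}$ of $P_{j-s\kappa}$. Hence $[P_{j-s\kappa},\widetilde\chi_{J^\iota}]$, with kernel $\varpi_{j-s\kappa}(x-z)(\widetilde\chi_{J^\iota}(z)-\widetilde\chi_{J^\iota}(x))$, has size $O(1)$, not $2^{-cs}$. Even after commuting, $\widetilde\chi_{J^\iota}$ still sits between $I-G_\nu$ and $\mathcal K_{j,\nu}$, so the composite symbol you estimate never acts on $h_Q$. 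Concretely, integrate by parts in $r$ in \eqref{eq:lemma46-D}. Each derivative on $\widetilde\chi_{J^\iota}(y+r\theta)$ costs $2^{-(j-s\kappa)}$, while the phase gives only $|\langle\theta,\xi\rangle|\gtrsim2^{-s\gamma}|\xi|$. So at $|\xi|\approx2^{-k}$ this yields powers of $2^{-s}$ only when $k\le j-s\varepsilon$ with $\varepsilon>\kappa+\gamma$. In the band $j-s(\kappa+\gamma)\lesssim k\le j-s\kappa$, neither the phase nor $1-\widehat\varpi$ gives any decay in $s$.

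\emph{Symbol bounds do not give weighted $L^1$ bounds.} A sup bound on a symbol controls $L^2$, not $L^1(|b-b_Q|\,dx)$. Nor is the output rapidly decaying off a neighbourhood of $J$: the pieces at frequency $2^{-k}$ with $k>j$ live at spatial scale $2^k$.

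The paper handles all three points through the Littlewood--Paley decomposition $I=\sum_k\Lambda_k^2$.
\begin{itemize}
\item Lemma \ref{lem:sec6-M-derivative-bound}, including the factor $\min\{1,2^{j-s\kappa-k}\}$, plus integration by parts in $\xi$ gives pointwise kernel decay.
\item Lemma \ref{lem:sec6-BMO-kernel} turns that decay into a BMO-weighted $L^1$ bound with loss $s+|j-k|$.
\item For $k\le j-s\varepsilon$ it integrates by parts $N_1$ times in $r$.
\item For $k>j-s\varepsilon$ it uses the mean zero of $h_Q$, replacing $y$ by $y_Q$ in the phase, the cutoff and the weight. This gains $2^{j-s-k}$ and $2^{-(1-\kappa)s}$.
\end{itemize}
Your proposal never uses the cancellation of $h_Q$ in this lemma, and without it there is no decay in the band above.

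\textbf{The $L^3$ step is also not closed, as you suspected.} Any route through Lemma \ref{lem:relevant-cubes-carleson} or through the depth $u_0$ carries a power of $2^{ds}$. Interpolation cannot absorb this: the $L^1$ gain is at most about $2^{-s}$ and enters with exponent $1/4$, while even a loss $2^{ds/3}$ in $L^3$ enters with exponent $3/4$. Also, $|T_{j,\nu}h_Q|\lesssim2^{\eta s}|J|^{-1}\|h_Q\|_1$, not $2^{\eta s}|Q|^{-1}\|h_Q\|_1$; the latter already loses $2^{ds}$.

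The paper needs no overlap counting. By duality with $g\in L^{3/2}$, it uses $|u_{J,Q,\nu}|\lesssim2^{-jd}\|\Omega\|_\infty\|h_Q\|_1\chi_{4J}$ and $(|J|^{-1}\int_J|b-b_Q|^6)^{1/6}\lesssim s\Bnorm$. This reduces matters to $\int W\,M_{6/5}g$ with $W=\sum_{J,Q}|Q|^{-1}\|h_Q\|_1\chi_Q$. Here $W$ is bounded, because the bad cubes are disjoint and each lies in a single $J$. Hence $\|W\|_3\lesssim(\sum_{J,Q}\|h_Q\|_1)^{1/3}$.

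The $Q$-dependent constant outside $I-G_\nu$ is removed by
\[
(b-b_Q)(I-G_\nu)u=(I-G_\nu)[(b-b_Q)u]-[b,G_\nu]u.
\]
The first term uses the unweighted Mikhlin bound $2^{s\gamma d}$. The commutator does not depend on $Q$, so it acts on $\sum_{J,Q}u_{J,Q,\nu}$. It is bounded on $L^3$ with norm $\lesssim2^{s\gamma d}\Bnorm$, by the Coifman--Rochberg--Weiss analytic family with the $A_3$ weights $e^{3\operatorname{Re}(z)b}$ of Lemma \ref{lem:bmo-exponential-weight}. Your fallback $b-b_Q=(b-b_J)+(b_J-b_Q)$ does not help, since $b_J$ still varies from cube to cube.
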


The boundary and low-frequency estimates are proved below.  Section
\ref{sec:proof-lemma45} establishes Lemma \ref{lem:sec42-main-microlocal} by a
$TT^*$ argument and Lemma \ref{lem:sec42-commutator-microlocal} by the analytic-family method of Coifman, Rochberg, and Weiss \cite{CoifmanRochbergWeiss1976}. The remainder is
estimated in Section \ref{sec:proof-lemma47}.

\subsection{Boundary and low-frequency estimates}
\label{subsec:boundary-low-frequency}

We first prove Lemma \ref{lem:sec42-boundary-error}.  Set
\begin{align*}
	\mathcal E_{s,n,\iota}(x)  \coloneqq  \sum_{J \in \mathcal{I}_{s,\iota}^{\#,n}}
	\sum_{Q\in\mathfrak{Q}_{J-s}}
	\varepsilon_J\big(b(x)-b_Q\big)
	T_jh_Q(x)
	\big(\chi_{J^\iota}(x)-\widetilde\chi_{J^\iota}(x)\big).
\end{align*}
We combine a rough $L^3$ estimate with an $L^1$ estimate having
$2^{-s\kappa}$ decay.  In what follows, $\sum_{J,Q}$ denotes
$\sum_{J\in\mathcal I_{s,\iota}^{\#,n}}
\sum_{Q\in\mathfrak Q_{J-s}}$.

\noindent
\textbf{\(L^3\) estimate.}
Let
\begin{equation*}
	\begin{aligned}
		W(y) \coloneqq \sum_{J,Q} \frac{\|h_Q\|_{L^1}}{|Q|}\chi_Q(y).
	\end{aligned}
\end{equation*}
The bad cubes \(Q\) are pairwise disjoint. Moreover, the
Calder\'on-Zygmund decomposition at height \(1\) gives
\(\|h_Q\|_{L^1}\lesssim |Q|\). Consequently,
\begin{equation}
	\label{eq:lemma42-W-L3-bound}
	\|W\|_{L^3}\lesssim \Big(\sum_{J,Q}\|h_Q\|_{L^1}\Big)^{1/3}.
\end{equation}

Let \(M\) be the uncentered Hardy-Littlewood maximal operator and set
\[
M_{6/5}g
\coloneqq
\bigl(M(|g|^{6/5})\bigr)^{5/6}.
\]
Since \(Q\subset J\), for every \(y\in Q\) we have
\begin{equation}
	\label{eq:lemma42-gleqM6/5g}
	\bigg(\frac{1}{|J|}\int_J |g(x)|^{6/5}\,dx \bigg)^{5/6}
	\leq M_{6/5}g(y).
\end{equation}

Now take \(g\geq0\) with \(\|g\|_{L^{3/2}}=1\). By the
John-Nirenberg inequality,
\[
\begin{aligned}
	\left(\frac{1}{|J|}\int_J|b-b_Q|^6\,dx\right)^{1/6}
	&\leq
	\left(\frac{1}{|J|}\int_J|b-b_J|^6\,dx\right)^{1/6}
	+|b_J-b_Q|  \\
	&\lesssim s\|b\|_{\operatorname{BMO}(\mathbb R^d)}.
\end{aligned}
\]
The kernel size estimate and its support property give
\[
|T_jh_Q(x)|
\lesssim
\|\Omega\|_{L^\infty(\mathbb S^{d-1})}
\frac{\|h_Q\|_{L^1}}{|J|}\chi_J(x).
\]
Thus H\"older's inequality and \eqref{eq:lemma42-gleqM6/5g} give
\[
\begin{aligned}
	\int_{\mathbb R^d}|\mathcal E_{s,n,\iota}(x)|g(x)\,dx
	&\lesssim
	\|\Omega\|_{L^\infty}
	\sum_{J,Q}
	\|h_Q\|_{L^1}\frac{1}{|J|}
	\int_J|b(x)-b_Q|g(x)\,dx\\
	&\lesssim
	s\|\Omega\|_{L^\infty}
	\|b\|_{\operatorname{BMO}}
	\sum_{J,Q}
	\|h_Q\|_{L^1}
	\left(\frac{1}{|J|}\int_Jg(x)^{6/5}\,dx\right)^{5/6}\\
	&\leq
	s\|\Omega\|_{L^\infty}
	\|b\|_{\operatorname{BMO}}
	\int_{\mathbb R^d}W(y)M_{6/5}g(y)\,dy.
\end{aligned}
\]
By \eqref{eq:lemma42-W-L3-bound} and \(L^{5/4}\) boundedness of \(M\),
\[
\begin{aligned}
	\int_{\mathbb R^d}|\mathcal E_{s,n,\iota}(x)|g(x)\,dx & \leq
	s\|\Omega\|_{L^\infty}
	\|b\|_{\operatorname{BMO}} \|W\|_{L^3} \|M_{6/5}g\|_{L^{3/2}} \\
	&\lesssim
	s\|\Omega\|_{L^\infty}
	\|b\|_{\operatorname{BMO}}
	\Big(\sum_{J,Q} \|h_Q\|_{L^1}\Big)^{1/3} \|g\|_{L^{3/2}}.
\end{aligned}
\]
Taking the supremum over such \(g\) and applying
Lemma \ref{lem:layered-bad-mass}, we obtain
\begin{equation}
	\label{eq:lemma42-L3}
	\|\mathcal E_{s,n,\iota}\|_{L^3(\mathbb R^d)}
	\lesssim
	s\,2^{-2n/3}
	\|\Omega\|_{L^\infty(\mathbb S^{d-1})}
	\|b\|_{\operatorname{BMO}(\mathbb R^d)}
	\|f\|_{L^1(\mathbb R^d)}^{1/3}.
\end{equation}

\medskip
\noindent
\textbf{\(L^1\) estimate.}
Fix \(J\in\mathcal I_{s,\iota}^{\#,n}\) and
\(Q\in\mathfrak{Q}_{J-s}\). The definition
\(\widetilde\chi_{J^\iota}=P_{j-s\kappa}\chi_{J^\iota}\) implies that
\begin{equation*}
	\operatorname{supp}
	\big(\chi_{J^\iota}-\widetilde\chi_{J^\iota}\big)
	\subset E_J  \coloneqq  \big\{x: \operatorname{dist}(x,\partial J^\iota) \lesssim 2^{j-s\kappa}\big\}.
\end{equation*}
A comparison of scales shows that \(E_J\subset4J\) and
\begin{equation*}
	|E_J|
	\lesssim 2^{jd-s\kappa}
	=2^{-s\kappa}|J|.
	\label{eq:lemma42-EJ-measure}
\end{equation*}
It follows that
\begin{equation}
	\label{eq:lemma42-single-L1}
	\begin{aligned}
		&\big\|
		\big(b-b_Q\big)T_jh_Q
		\big(\chi_{J^\iota}-\widetilde\chi_{J^\iota}\big)
		\big\|_{L^1(\mathbb R^d)} \\
		& \quad \lesssim
		2^{-jd}
		\|\Omega\|_{L^\infty(\mathbb S^{d-1})}
		\|h_Q\|_{L^1(\mathbb R^d)}
		\int_{E_J}|b(x)-b_Q|dx.
	\end{aligned}
\end{equation}
Applying Corollary \ref{cor:bmo-subset-integral} with \(R=4J\) and \(E=E_J\), we obtain
\begin{equation*}
	\int_{E_J}|b(x)-b_{4J}|dx
	\lesssim (1+s\kappa)2^{jd-s\kappa}
	\|b\|_{\operatorname{BMO}(\mathbb R^d)}.
\end{equation*}
On the other hand, since
\(Q\subset J\subset4J\) and
\(\ell(4J)/\ell(Q)=2^{s+2}\), we have
\begin{equation*}
	|b_{4J}-b_Q|
	\lesssim
	s\|b\|_{\operatorname{BMO}(\mathbb R^d)}.
\end{equation*}
Therefore,
\begin{equation}
	\label{eq:lemma42-EJ-bQ}
	\begin{aligned}
		\int_{E_J}|b(x)-b_Q|dx
		&\leq
		\int_{E_J}|b(x)-b_{4J}|dx
		+|b_{4J}-b_Q||E_J| \\
		&\lesssim
		s2^{jd-s\kappa}
		\|b\|_{\operatorname{BMO}(\mathbb R^d)}.
	\end{aligned}
\end{equation}
Substituting \eqref{eq:lemma42-EJ-bQ} into
\eqref{eq:lemma42-single-L1}, we obtain
\begin{align*}
	&\big\|
	\big(b-b_Q\big)T_jh_Q
	\big(\chi_{J^\iota}-\widetilde\chi_{J^\iota}\big)
	\big\|_{L^1(\mathbb R^d)}\\
	&\quad\lesssim
	s2^{-s\kappa}
	\|\Omega\|_{L^\infty(\mathbb S^{d-1})}
	\|b\|_{\operatorname{BMO}(\mathbb R^d)}
	\|h_Q\|_{L^1(\mathbb R^d)}.
\end{align*}
Summing over \(J\) and \(Q\) and applying Lemma \ref{lem:layered-bad-mass} gives
\begin{equation}		\label{eq:lemma42-L1} 
	\begin{aligned}
		\| \mathcal E_{s,n,\iota} \|_{L^1(\mathbb R^d)} \lesssim 2^{-2n}
		s2^{-s\kappa}
		\|\Omega\|_{L^\infty(\mathbb S^{d-1})}
		\|b\|_{\operatorname{BMO}(\mathbb R^d)}
		\|f\|_{L^1}.
	\end{aligned}
\end{equation}

\noindent
\textbf{Interpolation.}
Interpolating between \eqref{eq:lemma42-L1} and \eqref{eq:lemma42-L3}, we obtain
\begin{align*}
	\|\mathcal E_{s,n,\iota}\|_{L^2(\mathbb R^d)}
	&\leq
	\|\mathcal E_{s,n,\iota}\|_{L^1(\mathbb R^d)}^{1/4}
	\|\mathcal E_{s,n,\iota}\|_{L^3(\mathbb R^d)}^{3/4} \\
	& \lesssim 2^{-n} s2^{-s\kappa/4}
	\|\Omega\|_{L^\infty(\mathbb S^{d-1})}
	\|b\|_{\operatorname{BMO}(\mathbb R^d)}
	\|f\|_{L^1}^{1/2}.
\end{align*}
The bounded kernel considered here satisfies
\begin{equation*}
	\|\Omega\|_{L^\infty(\mathbb S^{d-1})} \leq 2^{\eta s}.
\end{equation*}
Consequently,
\begin{align*}
	\|\mathcal E_{s,n,\iota}\|_{L^2(\mathbb R^d)} \lesssim 2^{-s(\kappa/4-\eta)}2^{-n}
	\|b\|_{\operatorname{BMO}(\mathbb R^d)}
	\|f\|_{L^1(\mathbb R^d)}^{1/2}.
\end{align*}
Choose $\eta<\kappa/4$.  The polynomial factor $s$ is then absorbed by a
slight decrease of the decay exponent, which proves Lemma
\ref{lem:sec42-boundary-error}.

We next prove Lemma \ref{lem:sec42-low-frequency}.  Set
\begin{align*}
	\mathcal E_{s,n,\iota}(x)
	\coloneqq  \sum_{J\in\mathcal I_{s,\iota}^{\#,n}}
	\sum_{Q\in\mathfrak Q_{J-s}}
	\big(b(x)-b_Q\big)
	P_{j-s\kappa}\Big[
	\varepsilon_J\big(T_jh_Q\big)
	\widetilde\chi_{J^\iota}
	\Big](x).
\end{align*}

\medskip
\noindent
\textbf{$L^3$ estimate.}
Since $0\leq\widetilde\chi_{J^\iota}\leq1$,
$|\varepsilon_J|=1$, and $P_{j-s\kappa}$ is convolution with the nonnegative function
$\varpi_{j-s\kappa}$, we have
\begin{equation*}
	\begin{aligned}
		&\big|
		P_{j-s\kappa}\big[
		\varepsilon_J (T_jh_Q)
		\widetilde\chi_{J^\iota}
		\big](x)
		\big|
		\\
		&\qquad\leq
		P_{j-s\kappa}\big[|T_jh_Q|\big](x)
		\\
		&\qquad\leq
		\big(
		\varpi_{j-s\kappa}*|\mathcal K_j|
		\big)*|h_Q|(x)
		\lesssim
		\mathcal H_j*|h_Q|(x),
	\end{aligned}
\end{equation*}
where
\begin{equation*}
	\mathcal H_j(x)
	\coloneqq 
	2^{-jd}
	\|\Omega\|_{L^\infty(\mathbb S^{d-1})}
	\chi_{\{2^{j-5}<|x|<2^{j-1}\}}(x).
\end{equation*}
This follows from
\[
\operatorname{supp}\varpi_{j-s\kappa}
\subset
\big\{x:|x|\leq2^{j-s\kappa-5}\big\},
\]
together with the support and size estimates for $\mathcal K_j$. Thus $\mathcal H_j$ has the same scale, support, and size properties as
$|\mathcal K_j|$ in the $L^3$ estimate of Lemma \ref{lem:sec42-boundary-error}. Repeating the argument used in Lemma \ref{lem:sec42-boundary-error} gives
\begin{equation}\label{eq:lemma43-L3}
	\begin{aligned}
		\|\mathcal E_{s,n,\iota}\|_{L^3(\mathbb R^d)}
		&\lesssim
		s\|\Omega\|_{L^\infty(\mathbb S^{d-1})}
		\|b\|_{\operatorname{BMO}(\mathbb R^d)}
		\bigg( \sum_{ J\in\mathcal I_{s,\iota}^{\#,n}} \sum_{Q\in\mathfrak Q_{J-s}}
		\|h_Q\|_{L^1(\mathbb R^d)}
		\bigg)^{1/3}
		\\
		&\lesssim
		s2^{-2n/3}
		\|\Omega\|_{L^\infty(\mathbb S^{d-1})}
		\|b\|_{\operatorname{BMO}(\mathbb R^d)}
		\|f\|_{L^1(\mathbb R^d)}^{1/3},
	\end{aligned}
\end{equation}
where the last step follows from Lemma
\ref{lem:layered-bad-mass}.

\medskip
\noindent
\textbf{$L^1$ estimate.}
Fix $J\in\mathcal I_{s,\iota}^{\#,n}$ and
$Q\in\mathfrak Q_{J-s}$. Let $y_Q$ be the center of $Q$. For fixed $x$, define
\begin{equation*}
	\begin{aligned}
		\mathcal K_x(y)
		& \coloneqq \int_{\mathbb R^d}
		\varpi_{j-s\kappa}(x-w)
		\mathcal K_j(w-y)
		\widetilde\chi_{J^\iota}(w)dw\\
		&=\int_{\mathbb R^d}
		\varpi_{j-s\kappa}(x-y-z)
		\mathcal K_j(z)
		\widetilde\chi_{J^\iota}(y+z)dz.
	\end{aligned}
\end{equation*}
Then
\begin{equation*}
	P_{j-s\kappa}\big[
	\varepsilon_J\big(T_jh_Q\big)
	\widetilde\chi_{J^\iota}
	\big](x)
	=\varepsilon_J\int_Qh_Q(y)\mathcal K_x(y)dy.
\end{equation*}
Since $h_Q$ has mean zero,
\begin{equation}
	\label{eq:lemma43-use-cancellation}
	\begin{aligned}
		&\big(b(x)-b_Q\big)
		P_{j-s\kappa}\big[
		\varepsilon_J\big(T_jh_Q\big)
		\widetilde\chi_{J^\iota}
		\big](x) \\
		&\qquad=
		\varepsilon_J\big(b(x)-b_Q\big)
		\int_Qh_Q(y)
		\big(\mathcal K_x(y)-\mathcal K_x(y_Q)\big)dy.
	\end{aligned}
\end{equation}
For $0\leq t\leq1$, set
\begin{equation*}
	y_t \coloneqq ty+(1-t)y_Q.
\end{equation*}
By the fundamental theorem of calculus,
\begin{equation}
	\mathcal K_x(y)-\mathcal K_x(y_Q)
	=\int_0^1
	\big\langle y-y_Q,\nabla_y\mathcal K_x(y_t)\big\rangle dt.
	\label{eq:lemma43-mean-value}
\end{equation}
Differentiating $\mathcal K_x(y)$ with respect to $y$ gives
\begin{equation}
	\label{eq:lemma43-kernel-derivative}
	\begin{aligned}
		\nabla_y\mathcal K_x(y_t)
		&=-\int_{\mathbb R^d}
		\nabla\varpi_{j-s\kappa}(x-y_t-z)
		\mathcal K_j(z)
		\widetilde\chi_{J^\iota}(y_t+z)dz \\
		&\quad+
		\int_{\mathbb R^d}
		\varpi_{j-s\kappa}(x-y_t-z)
		\mathcal K_j(z)
		\nabla\widetilde\chi_{J^\iota}(y_t+z)dz.
	\end{aligned}
\end{equation}
We next integrate \eqref{eq:lemma43-kernel-derivative} against the BMO
oscillation in $x$.
Since $y_t\in Q\subset\frac12J$, $\operatorname{supp}\mathcal K_j
\subset\{z:|z|\leq 2^{j-2}\}$, and
$\operatorname{supp}\varpi_{j-s\kappa}
\subset\{x:|x| \leq 2^{j-s\kappa-5}\}$, every $x$ occurring in the integral
lies in $ 16J $. We use the following two estimates for $\BMO$ functions:
\begin{align}
	&\int_{\mathbb R^d}
	|b(x)-b_Q|
	\varpi_{j-s\kappa}(x-a)dx
	\lesssim
	s\|b\|_{\operatorname{BMO}(\mathbb R^d)},
	\label{eq:lemma43-BMO-varpi}\\
	&\int_{\mathbb R^d}
	|b(x)-b_Q|
	|\nabla\varpi_{j-s\kappa}(x-a)|dx
	\lesssim
	s2^{-j+s\kappa}
	\|b\|_{\operatorname{BMO}(\mathbb R^d)},
	\label{eq:lemma43-BMO-gradient-varpi}
\end{align}
Here $a=y_t+z$, with $z$ restricted to the support of $\mathcal K_j$.
To see these bounds, choose a cube $R_a$ containing
$a+\operatorname{supp}\varpi_{j-s\kappa}$ with
$\ell(R_a)\sim2^{j-s\kappa}$. The support conditions allow us to take
$R_a\subset 16J $. We then have
\begin{align*}
	\frac{1}{|R_a|}\int_{R_a}|b(x)-b_Q|dx
	&\leq
	\frac{1}{|R_a|}\int_{R_a}|b(x)-b_{R_a}|dx
	+|b_{R_a}-b_{ 16J }|+|b_{ 16J }-b_Q|\\
	&\lesssim
	s\|b\|_{\operatorname{BMO}(\mathbb R^d)}.
\end{align*}
The last inequality follows by comparing BMO averages across the intervening
scales.
Also,
\begin{align*}
	\varpi_{j-s\kappa}(x-a) & \lesssim\frac{\chi_{R_a}(x)}{|R_a|}, \\
	|\nabla\varpi_{j-s\kappa}(x-a)| & \lesssim
	2^{-j+s\kappa}\frac{\chi_{R_a}(x)}{|R_a|}.
\end{align*}
This proves
\eqref{eq:lemma43-BMO-varpi} and \eqref{eq:lemma43-BMO-gradient-varpi}.
On the other hand,
\begin{align}
	\label{eq:lemma43-kernel-bound}
	& \|\mathcal K_j\|_{L^1(\mathbb R^d)} \lesssim\|\Omega\|_{L^\infty(\mathbb S^{d-1})}, \\
	& \label{eq:lemma43-cutoff-bound}
	\|\nabla\widetilde\chi_{J^\iota}\|_{L^\infty(\mathbb R^d)}
	\lesssim2^{-j+s\kappa}.
\end{align}
Substituting \eqref{eq:lemma43-BMO-varpi}-\eqref{eq:lemma43-cutoff-bound} into
\eqref{eq:lemma43-kernel-derivative}, integrating in $x$, and applying Fubini's theorem, we obtain
\begin{equation}		\label{eq:lemma43-weighted-kernel-derivative}
	\begin{aligned}
		& \int_{\mathbb R^d}
		|b(x)-b_Q|
		|\nabla_y\mathcal K_x(y_t)|dx \\
		&\qquad\lesssim
		s2^{-j+s\kappa}
		\|\Omega\|_{L^\infty(\mathbb S^{d-1})}
		\|b\|_{\operatorname{BMO}(\mathbb R^d)}.
	\end{aligned}
\end{equation}
Since $y,y_Q\in Q$,
\begin{equation}
	|y-y_Q|\lesssim2^{j-s}.
	\label{eq:lemma43-y-distance}
\end{equation}
Combining \eqref{eq:lemma43-use-cancellation},
\eqref{eq:lemma43-mean-value},
\eqref{eq:lemma43-weighted-kernel-derivative}, and
\eqref{eq:lemma43-y-distance}, we obtain
\begin{equation*}\label{eq:lemma43-single-L1}
	\begin{aligned}
		&\Big\|
		\big(b-b_Q\big)
		P_{j-s\kappa}\big[
		\varepsilon_J\big(T_jh_Q\big)
		\widetilde\chi_{J^\iota}
		\big]
		\Big\|_{L^1(\mathbb R^d)} \\
		&\qquad\lesssim
		s2^{-s(1-\kappa)}
		\|\Omega\|_{L^\infty(\mathbb S^{d-1})}
		\|b\|_{\operatorname{BMO}(\mathbb R^d)}
		\|h_Q\|_{L^1(\mathbb R^d)}.
	\end{aligned}
\end{equation*}
Summing over $J$ and $Q$ gives
\begin{equation}
	\|\mathcal E_{s,n,\iota}\|_{L^1(\mathbb R^d)}
	\lesssim
	s2^{-s(1-\kappa)} 2^{-2n} \|\Omega\|_{L^\infty(\mathbb S^{d-1})}
	\|b\|_{\operatorname{BMO}(\mathbb R^d)} \|f\|_{L^1}.
	\label{eq:lemma43-L1}
\end{equation}

\medskip
\noindent
\textbf{Interpolation.}
Interpolating between \eqref{eq:lemma43-L1} and \eqref{eq:lemma43-L3},
we obtain
\begin{equation*}
	\begin{aligned}
		\|\mathcal E_{s,n,\iota}\|_{L^2(\mathbb R^d)}
		&\leq
		\|\mathcal E_{s,n,\iota}\|_{L^1(\mathbb R^d)}^{1/4}
		\|\mathcal E_{s,n,\iota}\|_{L^3(\mathbb R^d)}^{3/4}
		\notag\\
		&\lesssim
		s2^{-s(1-\kappa)/4} 2^{-n}
		\|\Omega\|_{L^\infty(\mathbb S^{d-1})}
		\|b\|_{\operatorname{BMO}(\mathbb R^d)}\|f\|_{L^1}^{1/2}.
		\label{eq:lemma43-interpolation}
	\end{aligned}
\end{equation*}
Using $\|\Omega\|_{L^\infty(\mathbb S^{d-1})} \lesssim 2^{s\eta}$, we deduce
\begin{equation*}
	\|\mathcal E_{s,n,\iota}\|_{L^2(\mathbb R^d)} \lesssim 2^{-s\big( \frac{1-\kappa}{4}-\eta\big)}2^{-n}
	\|b\|_{\operatorname{BMO}(\mathbb R^d)} \|f\|_{L^1(\mathbb R^d)}^{1/2}.
\end{equation*}
It suffices to choose
\begin{equation*}
	0<\kappa<1,
	\qquad
	0<\eta<\frac{1-\kappa}{4},
\end{equation*}
and then absorb the factor $s$ by decreasing the decay exponent.  This proves
Lemma \ref{lem:sec42-low-frequency}.
\vspace{0.3cm}

\section{Proof of Lemmas \ref{lem:sec42-main-microlocal} and  \ref{lem:sec42-commutator-microlocal}}
\label{sec:proof-lemma45}
\begin{proof} [{ Proof of Lemma \ref{lem:sec42-main-microlocal}}]

Set
\begin{equation*}
	\mathcal E_{s,n,\iota}
	\coloneqq 
	\sum_{J\in\mathcal I_{s,\iota}^{\#,n}}
	\sum_{Q\in\mathfrak Q_{J-s}}
	\sum_{\nu}
	G_\nu \Big[
	\big(b-b_Q\big)
	\big(I-P_{j-s\kappa}\big)\Big[
	\varepsilon_J\big(T_{j,\nu} h_Q\big)
	\widetilde\chi_{J^\iota}
	\Big]
	\Big].
\end{equation*}
We prove the required decay by a $TT^*$ argument.  First decompose $\Esn$
into directionally localized terms $\Esn^\nu$.

Recall that
\begin{equation*}
	\widehat{G_\nu  u}(\xi)
	=\Psi\Big(
	2^{s\gamma}
	\Big\langle e_\nu,\frac{\xi}{|\xi|}\Big\rangle
	\Big)\widehat u(\xi).
\end{equation*}
The support of $\Psi$ implies that
\begin{equation*}
	\operatorname{dist}\Big(
	e_\nu,\big(\xi/|\xi|\big)^\perp
	\Big)
	\lesssim 2^{-s\gamma}.
\end{equation*}
Since the directions $\{e_\nu\}_\nu$ are $2^{-s\gamma-4}$-separated, this
spherical band contains at most $C_d2^{s\gamma(d-2)}$ such points.  Hence
\begin{equation*}
	\sup_{\xi\neq0}
	\sum_{\nu}
	\Big|
	\Psi\Big(
	2^{s\gamma}
	\Big\langle e_\nu,\frac{\xi}{|\xi|}\Big\rangle
	\Big)
	\Big|^2
	\lesssim2^{s\gamma(d-2)}.
\end{equation*}
Plancherel's theorem yields
\begin{equation}\label{eq:lemma44-G-square-function}
	\sum_{\nu}
	\|G_\nu u\|_{L^2(\mathbb R^d)}^2
	\lesssim
	2^{s\gamma(d-2)}
	\|u\|_{L^2(\mathbb R^d)}^2.
\end{equation}
For each fixed $\nu$, set
\begin{equation*}
	\mathcal E_{s,n,\iota}^\nu
	\coloneqq 
	\sum_{J\in\mathcal I_{s,\iota}^{\#,n}}
	\sum_{Q\in\mathfrak Q_{J-s}}
	\big(b-b_Q\big)
	\big(I-P_{j-s\kappa}\big)\Big[
	\varepsilon_J\big(T_{j,\nu} h_Q\big)
	\widetilde\chi_{J^\iota}
	\Big].
\end{equation*}
For any $u \in L^2(\mathbb R^d)$, the Cauchy-Schwarz
inequality and \eqref{eq:lemma44-G-square-function} give
\begin{equation*}
	\begin{aligned}
		\big|\langle\mathcal E_{s,n,\iota},u \rangle\big|
		&=\Big|
		\sum_{\nu}
		\langle\mathcal E_{s,n,\iota}^\nu,G_\nu  u \rangle \Big|\\
		&\leq \Big( \sum_{\nu}
		\|\mathcal E_{s,n,\iota}^\nu\|_{L^2(\mathbb R^d)}^2
		\Big)^{1/2}
		\Big(
		\sum_{\nu}
		\|G_\nu  u\|_{L^2(\mathbb R^d)}^2
		\Big)^{1/2}\\
		&\lesssim
		2^{s\gamma(d-2)/2}
		\Big(
		\sum_{\nu}
		\|\mathcal E_{s,n,\iota}^\nu\|_{L^2(\mathbb R^d)}^2
		\Big)^{1/2}
		\|u\|_{L^2(\mathbb R^d)}.
	\end{aligned}
\end{equation*}
Therefore,
\begin{equation}\label{eq:lemma44-reduce-to-fixed-directions}
	\|\mathcal E_{s,n,\iota}\|_{L^2(\mathbb R^d)}^2
	\lesssim
	2^{s\gamma(d-2)}
	\sum_{\nu}
	\|\mathcal E_{s,n,\iota}^\nu\|_{L^2(\mathbb R^d)}^2.
\end{equation}

Fix a direction $\nu$ and define
\begin{equation*}
	E_{j,\nu}  
	\coloneqq \Big\{
	x \in \mathbb R^d: |\langle x,e_\nu\rangle|\leq2^{j},
	\quad
	\big|x-\langle x,e_\nu\rangle e_\nu\big|
	\leq 2^{j-s\gamma}
	\Big\}.
\end{equation*}
Its measure satisfies
\begin{equation}\label{eq:lemma44-sector-measure}
	|  E_{j,\nu}  |
	\lesssim2^{jd-s\gamma(d-1)}.
\end{equation}
For $Q\in\mathfrak Q_{J-s}$, define the kernel
\begin{equation}\label{eq:lemma44-composed-kernel}
	\begin{aligned}
		L_{J,Q}^\nu(x,y)
		& \coloneqq \varepsilon_J\mathcal K_{j,\nu}(x-y)
		\widetilde\chi_{J^\iota}(x)\\
		&\quad-
		\varepsilon_J
		\int_{\mathbb R^d}
		\varpi_{j-s\kappa}(x-z)
		\mathcal K_{j,\nu}(z-y)
		\widetilde\chi_{J^\iota}(z)dz.
	\end{aligned}
\end{equation}
Then
\begin{equation*}
	\big(I-P_{j-s\kappa}\big)\Big[
	\varepsilon_J\big(T_{j,\nu} h_Q\big)
	\widetilde\chi_{J^\iota}
	\Big](x)
	=\int_Q L_{J,Q}^\nu(x,y)h_Q(y)dy.
\end{equation*}
Hence,
\begin{equation}\label{eq:lemma44-fixed-direction-kernel-form}
	\mathcal E_{s,n,\iota}^\nu(x)
	=
	\sum_{J\in\mathcal I_{s,\iota}^{\#,n}}
	\sum_{Q\in\mathfrak Q_{J-s}}
	\big(b(x)-b_Q\big)
	\int_Q L_{J,Q}^\nu(x,y)h_Q(y)dy.
\end{equation}
The support properties of $\mathcal K_{j,\nu}$ and $\varpi_{j-s\kappa}$,
together with $0<\gamma\ll\kappa<1$, give
\begin{equation}\label{eq:lemma44-composed-kernel-bound}
	| L_{J,Q}^\nu(x,y)|
	\lesssim
	2^{-jd}\|\Omega\|_{L^\infty(\mathbb S^{d-1})}
	\chi_{  E_{j,\nu}  }(x-y).
\end{equation}

To estimate $\|\mathcal E_{s,n,\iota}^\nu\|_2^2$, let $I$ denote a second
localization cube, with $\ell(I)=2^i$.  Expanding the square in
\eqref{eq:lemma44-fixed-direction-kernel-form} gives
\begin{equation}\label{eq:lemma44-full-expansion}
	\begin{aligned}
		\|\mathcal E_{s,n,\iota}^\nu\|_{L^2(\mathbb R^d)}^2 = & \sum_{J\in\mathcal I_{s,\iota}^{\#,n}}
		\sum_{Q\in\mathfrak Q_{J-s}}
		\sum_{I\in\mathcal I_{s,\iota}^{\#,n}}
		\sum_{R\in\mathfrak Q_{I-s}}
		\int_Q\int_R h_Q(y)\overline{h_R(z)}
		\\
		&\quad\times
		\int_{\mathbb R^d}
		\big(b(x)-b_Q\big)\overline{\big(b(x)-b_R\big)}
		L_{J,Q}^\nu(x,y)
		\overline{L_{I,R}^\nu(x,z)}\,\dd x\,\dd z\,\dd y.
	\end{aligned}
\end{equation}
The sum is symmetric in $(J,Q)$ and $(I,R)$.  After taking absolute values,
we may therefore restrict to $\ell(I)\leq\ell(J)$.  The support of
$L_{J,Q}^\nu(x,y)$ then gives
\begin{equation}\label{eq:lemma44-TTstar-reduction}
	\begin{aligned}
		\|\mathcal E_{s,n,\iota}^\nu\|_{L^2(\mathbb R^d)}^2
		\lesssim{}&
		\sum_{J\in\mathcal I_{s,\iota}^{\#,n}}
		\sum_{Q\in\mathfrak Q_{J-s}}
		\int_Q|h_Q(y)|\\
		&\quad\times
		\int_{\mathbb R^d}
		| L_{J,Q}^\nu(x,y)|
		|b(x)-b_Q|
		S_J^\nu(x,y)dxdy,
	\end{aligned}
\end{equation}
where
\begin{equation}\label{eq:lemma44-SJ-definition}
	\begin{aligned}
		S_J^\nu(x,y)
		\coloneqq &\ 
		\chi_{y+  E_{j,\nu}  }(x)
		\sum_{\substack{I\in\mathcal I_{s,\iota}^{\#,n}\\
				\ell(I)\leq\ell(J)}}
		\sum_{R\in\mathfrak Q_{I-s} }
		\big| b(x)-b_R \big|\\
		&\quad\times
		\int_R |  L_{I,R}^\nu(x,z)| \, |h_R(z)|dz.
	\end{aligned}
\end{equation}
A comparison of supports shows that $R$ meets
$y+E_{j,\nu}-E_{i,\nu}$.  Since $i\leq j$, $\ell(R)=2^{i-s}$, and
$0<\gamma<1$, every such $R$ is contained in $y+2E_{j,\nu}$.

We next estimate the local $L^1$ norm of $S_J^\nu(\cdot,y)$. The definition of $ E_{j,\nu} $ and the bound
$\|h_R\|_1\lesssim|R|$ from the Calder\'on-Zygmund decomposition imply
\begin{equation}\label{eq:lemma44-local-bad-mass}
	\sum_{\substack{I\in\mathcal I_{s,\iota}^{\#,n}\\
			\ell(I)\leq\ell(J)}}
	\sum_{\substack{R\in\mathfrak Q_{I-s}\\
			R\cap(y+2  E_{j,\nu}  )\neq\varnothing}}
	\|h_R\|_{L^1(\mathbb R^d)}
	\lesssim
	2^{jd-s\gamma(d-1)}.
\end{equation}

Fix such $I,R$ and $z\in R$. Set $R^\ast=2^{s+2} R$; then $z+ E_{i,\nu} \subset R^\ast$. Note that
\begin{equation*}
	|z+ E_{i,\nu} |
	\lesssim2^{id-s\gamma(d-1)}.
\end{equation*}
By Corollary \ref{cor:bmo-subset-integral},
\begin{equation}\label{eq:lemma44-bmo-on-sector}
	\begin{aligned}
		\int_{z+ E_{i,\nu} }
		|b(x)-b_R|dx
		&\leq
		\int_{z+ E_{i,\nu} }
		|b(x)-b_{R^\ast}|dx\\
		&\quad+
		|z+ E_{i,\nu} |
		|b_{R^\ast}-b_R|\\
		&\lesssim
		s\|b\|_{\operatorname{BMO}(\mathbb R^d)}
		2^{id-s\gamma(d-1)}.
	\end{aligned}
\end{equation}
Substituting \eqref{eq:lemma44-composed-kernel-bound},
\eqref{eq:lemma44-local-bad-mass}, and
\eqref{eq:lemma44-bmo-on-sector} into
\eqref{eq:lemma44-SJ-definition}, we obtain
\begin{equation}\label{eq:lemma44-SJ-L1}
	\begin{aligned}
		\|S_J^\nu(\cdot,y)\|_{L^1( 16J )}
		&\lesssim
		s\|\Omega\|_{L^\infty(\mathbb S^{d-1})}
		\|b\|_{\operatorname{BMO}(\mathbb R^d)}
		2^{-s\gamma(d-1)}\\
		&\quad\times
		\sum_{\substack{I\in\mathcal I_{s,\iota}^{\#,n}\\
				\ell(I)\leq\ell(J)}}
		\sum_{\substack{R\in\mathfrak Q_{I-s}\\
				R\cap(y+2  E_{j,\nu}  )\neq\varnothing}}
		\|h_R\|_{L^1(\mathbb R^d)}\\
		&\lesssim
		s\|\Omega\|_{L^\infty(\mathbb S^{d-1})}
		\|b\|_{\operatorname{BMO}(\mathbb R^d)}
		2^{jd-2s\gamma(d-1)}.
	\end{aligned}
\end{equation}

We also require the following uniform Orlicz bound for
$S_J^\nu(\cdot,y)$:
\begin{equation}\label{eq:lemma44-SJ-orlicz}
	\|S_J^\nu(\cdot,y)\|_{L^2(\log L)^{-2}, 16J }
	\lesssim
	s\|\Omega\|_{L^\infty(\mathbb S^{d-1})}
	\|b\|_{\operatorname{BMO}(\mathbb R^d)}.
\end{equation}
By Orlicz duality, it suffices to consider a function $g$ supported in $16J$
such that
\begin{equation*}
	\|g\|_{L^2(\log L)^2, 16J }\leq1
\end{equation*}
and estimate
$| 16J |^{-1}\int_{ 16J }|S_J^\nu(x,y)g(x)|dx$.
Let $M$ be the Hardy-Littlewood maximal operator. By the John-Nirenberg theorem,
$|b_{R^\ast}-b_R|\lesssim s\|b\|_{\operatorname{BMO}}$, and
Young's inequality
\begin{equation*}
	uv\lesssim e^u-1+v\log(e+v),
	\qquad u,v\geq0,
\end{equation*}
we have, for every $z\in R$,
\begin{equation}\label{eq:lemma44-young-maximal-bound}
	\begin{aligned}
		&2^{-id}
		\int_{(z+E_{i,\nu})\cap 16J}
		|b(x)-b_R||g(x)|dx\\
		&\quad\lesssim
		s\|b\|_{\operatorname{BMO}(\mathbb R^d)}
		2^{-id}
		\int_{R^\ast}
		\Bigg[
		\exp\Big(
		\frac{|b(x)-b_R|}
		{C_ds\|b\|_{\operatorname{BMO}(\mathbb R^d)}}
		\Big)-1\\
		&\hspace{6.5cm}
		+|g(x)|\log\big(e+|g(x)|\big) \Bigg]dx. \\
		&\quad\lesssim
		s\|b\|_{\operatorname{BMO}(\mathbb R^d)}
		\Bigg[
		1+
		\inf_{x\in R}
		M\Big(
		|g|\log\big(e+|g|\big) \Big)(x)
		\Bigg].
	\end{aligned}
\end{equation}

Summing over $I,R$ and using \eqref{eq:lemma44-young-maximal-bound}, \eqref{eq:lemma44-composed-kernel-bound}, and the estimate for the bad functions in the Calder\'on-Zygmund decomposition, we obtain
\begin{equation}\label{eq:lemma44-orlicz-duality-estimate}
	\begin{aligned}
		&\frac{1}{| 16J |}
		\int_{ 16J }|S_J^\nu(x,y)g(x)|dx\\
		&\quad\lesssim
		\frac{s\|\Omega\|_{L^\infty(\mathbb S^{d-1})}
			\|b\|_{\operatorname{BMO}(\mathbb R^d)}}{| 16J |} \sum_I \sum_R \|h_R\|_{L^1}
		\Big[ 1 + \inf_{x\in R}
		M\big(|g|\log(e+|g|)\big)(x)
		\Big]\\
		&\quad\lesssim
		\frac{s\|\Omega\|_{L^\infty(\mathbb S^{d-1})}
			\|b\|_{\operatorname{BMO}(\mathbb R^d)}}{| 16J |} \sum_I \sum_R \Big[|R| + |R|
		\inf_{x \in R}
		M\big(|g|\log(e+|g|)\chi_{ 16J }\big)(x)
		\Big]\\
		&\quad\lesssim
		\frac{s\|\Omega\|_{L^\infty(\mathbb S^{d-1})}
			\|b\|_{\operatorname{BMO}(\mathbb R^d)}}{| 16J |}
		\Big[
		| 16J |+
		\int_{ 16J }M\big(|g|\log(e+|g|) \big)(u)du
		\Big].
	\end{aligned}
\end{equation}
All sums over $R$ in \eqref{eq:lemma44-orlicz-duality-estimate} are restricted
to the bad cubes occurring in \eqref{eq:lemma44-local-bad-mass}.  These cubes
are pairwise disjoint, so the terms containing the infimum are bounded by the
corresponding integral.  The $L^2$ boundedness of $M$ and the normalization of
$g$ then give
\begin{equation}\label{eq:lemma44-maximal-L2}
	\begin{aligned}
		\int_{ 16J }M\big(|g|\log(e+|g|)\big)(u)du
		&\leq
		| 16J |^{1/2}
		\|M\big(|g|\log(e+|g|) \big)\|_{L^2(\mathbb R^d)}\\
		&\lesssim
		| 16J |^{1/2}
		\|g\log(e+|g|)\|_{L^2( 16J )}\\
		&\lesssim| 16J |.
	\end{aligned}
\end{equation}
Together, \eqref{eq:lemma44-orlicz-duality-estimate} and
\eqref{eq:lemma44-maximal-L2} prove
\eqref{eq:lemma44-SJ-orlicz}.

We now apply Lemma \ref{lem:orlicz-local-interpolation} using
\eqref{eq:lemma44-SJ-L1} and \eqref{eq:lemma44-SJ-orlicz}.  Take
$p=2$, $\alpha=2$, and choose $1<q<2$, $0<r<1$, and
$0<\epsilon<r$ so that
\begin{equation*}
	\frac{1}{q}=r+\frac{1-r}{2}.
\end{equation*}
Applying the interpolation lemma to
\begin{equation*}
	\frac{S_J^\nu(\cdot,y)}{
		Cs\|\Omega\|_{L^\infty(\mathbb S^{d-1})}
		\|b\|_{\operatorname{BMO}(\mathbb R^d)}}
\end{equation*}
gives
\begin{equation}\label{eq:lemma44-SJ-Lq}
	\begin{aligned}
		2^{-jd/q}
		\|S_J^\nu(\cdot,y)\|_{L^q( 16J )}
		\lesssim{}&
		s\|\Omega\|_{L^\infty(\mathbb S^{d-1})}
		\|b\|_{\operatorname{BMO}(\mathbb R^d)}\\
		&\quad\times
		2^{-2\epsilon s\gamma(d-1)}.
	\end{aligned}
\end{equation}

We now return to the estimate of $\|\Esn^\nu\|_{L^2}$. The John-Nirenberg theorem and $\ell(J)/\ell(Q)=2^s$ give
\begin{equation}\label{eq:lemma44-bQ-Lqprime}
	\begin{aligned}
		\|b-b_Q\|_{L^{q'}( 16J )}
		&\leq
		\|b-b_{ 16J }\|_{L^{q'}( 16J )}
		+|b_{ 16J }-b_Q|| 16J |^{1/q'}\\
		&\lesssim_{q}
		s\|b\|_{\operatorname{BMO}(\mathbb R^d)}
		2^{jd/q'}.
	\end{aligned}
\end{equation}
By \eqref{eq:lemma44-composed-kernel-bound}, \eqref{eq:lemma44-SJ-Lq}, and
\eqref{eq:lemma44-bQ-Lqprime},
\begin{equation*}
	\begin{aligned}
		&\int_{\mathbb R^d}
		| L_{J,Q}^\nu(x,y)|
		|b(x)-b_Q||S_J^\nu(x,y)|dx\\
		&\quad\lesssim
		2^{-jd}\|\Omega\|_{L^\infty(\mathbb S^{d-1})}
		\|b-b_Q\|_{L^{q'}( 16J )}
		\|S_J^\nu(\cdot,y)\|_{L^q( 16J )}\\
		&\quad\lesssim_q
		s^2\|\Omega\|_{L^\infty(\mathbb S^{d-1})}^2
		\|b\|_{\operatorname{BMO}(\mathbb R^d)}^2
		2^{-2\epsilon s\gamma(d-1)}.
	\end{aligned}
\end{equation*}
Substituting this into \eqref{eq:lemma44-TTstar-reduction} yields
\begin{equation}\label{eq:lemma44-fixed-direction-L2}
	\|\mathcal E_{s,n,\iota}^\nu\|_{L^2(\mathbb R^d)}^2
	\lesssim_q
	s^2\|\Omega\|_{L^\infty(\mathbb S^{d-1})}^2
	\|b\|_{\operatorname{BMO}(\mathbb R^d)}^2
	2^{-2\epsilon s\gamma(d-1)}
	\sum_{J\in\mathcal I_{s,\iota}^{\#,n}}
	\sum_{Q\in\mathfrak Q_{J-s}}
	\|h_Q\|_{L^1(\mathbb R^d)}.
\end{equation}

Since $\card(\Theta_s)\lesssim2^{s\gamma(d-1)}$, combining
\eqref{eq:lemma44-reduce-to-fixed-directions} with
\eqref{eq:lemma44-fixed-direction-L2} gives
\begin{equation*}
	\begin{aligned}
		\|\mathcal E_{s,n,\iota}\|_{L^2(\mathbb R^d)}^2
		\lesssim_q{}&
		s^2\|\Omega\|_{L^\infty(\mathbb S^{d-1})}^2
		\|b\|_{\operatorname{BMO}(\mathbb R^d)}^2\\
		&\quad\times
		2^{s\gamma\{2d-3-2\epsilon(d-1)\}}
		\sum_{J\in\mathcal I_{s,\iota}^{\#,n}}
		\sum_{Q\in\mathfrak Q_{J-s}}
		\|h_Q\|_{L^1(\mathbb R^d)}.
	\end{aligned}
\end{equation*}
Choose $r$ sufficiently close to $1$, and then choose $\epsilon<r$
sufficiently close to $1$, so that
\begin{equation*}
	2\epsilon(d-1)>2d-\frac52.
\end{equation*}
Finally, the layered bad-function mass estimate Lemma \ref{lem:layered-bad-mass} and $\| \Omega \|_{L^\infty} \leq 2^{\eta s}$ yield
\begin{equation*}
	\|\mathcal E_{s,n,\iota}\|_{L^2(\mathbb R^d)}
	\lesssim
	2^{-n}2^{-s(\gamma/4 - \eta)} \|b\|_{\operatorname{BMO}(\mathbb R^d)}
	\|f\|_{L^1(\mathbb R^d)}^{1/2}.
\end{equation*}
Choosing $0<\eta<\gamma/4$ and absorbing the polynomial factor $s$ into the
exponential decay proves Lemma \ref{lem:sec42-main-microlocal}.

\end{proof}
\begin{proof} [{ Proof of Lemma \ref{lem:sec42-commutator-microlocal}}]
For $J\in\mathcal I_{s,\iota}^{\#,n}$ and
$Q\in\mathfrak Q_{J-s}$, define
\begin{equation*}
	u_{J,Q,\nu}
	\coloneqq 
	\big(I-P_{j-s\kappa}\big)\Big[
	\varepsilon_J\big(T_{j,\nu} h_Q\big)
	\widetilde\chi_{J^\iota}
	\Big]
\end{equation*}
and
\begin{equation*}
	u_\nu
	\coloneqq 
	\sum_{J\in\mathcal I_{s,\iota}^{\#,n}}
	\sum_{Q\in\mathfrak Q_{J-s}}
	u_{J,Q,\nu}.
\end{equation*}
The expression in Lemma \ref{lem:sec42-commutator-microlocal} is therefore
\begin{equation*}
	\mathcal E_{s,n,\iota}
	\coloneqq 
	\sum_{J\in\mathcal I_{s,\iota}^{\#,n}}
	\sum_{Q\in\mathfrak Q_{J-s}}
	\sum_\nu [b,G_\nu] u_{J,Q,\nu}.
\end{equation*}

Recall that the multiplier of $G_\nu $ is
\begin{equation*}
	m_\nu(\xi)
	\coloneqq 
	\Psi\left(
	2^{s\gamma}
	\left\langle e_\nu,\frac{\xi}{|\xi|}\right\rangle
	\right).
\end{equation*}
Thus $\widehat{G_\nu g}(\xi)=m_\nu(\xi)\widehat g(\xi)$.  We use the
analytic-family method of Coifman, Rochberg, and Weiss
\cite{CoifmanRochbergWeiss1976}, together with the quantitative weighted
bounds of Hyt\"onen and P\'erez \cite{HytonenPerez2013}, to prove the following
square-function estimate.
\begin{lemma}
	\label{lem:lemma45-commutator-micro}
	For every $u\in L^2(\Rd)$,
	\begin{equation*}
		\sum_\nu
		\|[b,G_\nu ]u\|_{L^2(\mathbb R^d)}^2
		\lesssim
		2^{s\gamma(d-\frac32)}
		\|b\|_{\operatorname{BMO}(\mathbb R^d)}^2
		\|u\|_{L^2(\mathbb R^d)}^2.
	\end{equation*}
\end{lemma}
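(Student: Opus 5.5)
We will prove the square-function bound by combining an $L^2$ estimate with gain for the multiplier family and a weighted estimate without gain, and transferring both to the commutators through the Coifman--Rochberg--Weiss analytic family. The exponent $d-\tfrac32$ sits between the unweighted count $2^{s\gamma(d-2)}$ from \eqref{eq:lemma44-G-square-function} and the trivial count $\card(\Theta_s)\lesssim 2^{s\gamma(d-1)}$, which suggests interpolating between the two with parameter $\theta=\tfrac12$.

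\textbf{Analytic family.} We work with a vector-valued operator. For $z\in\mathbb C$ with $|z|\,\|b\|_{\BMO}\le c$, set
\[
\mathbf G_z u\coloneqq \bigl(e^{zb}G_\nu(e^{-zb}u)\bigr)_\nu .
\]
Then $\tfrac{d}{dz}\mathbf G_z u\big|_{z=0}=([b,G_\nu]u)_\nu$. Cauchy's integral formula on the circle $|z|=c/\|b\|_{\BMO}$ gives
\[
\Bigl(\sum_\nu\|[b,G_\nu]u\|_2^2\Bigr)^{1/2}\lesssim \|b\|_{\BMO}\sup_{|z|=c/\|b\|_{\BMO}}\Bigl(\sum_\nu\|\mathbf G_zu\|_2^2\Bigr)^{1/2}.
\]
This uses Minkowski's inequality in $\ell^2L^2$. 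Writing $w_z=e^{2\operatorname{Re}(z) b}$, it therefore suffices to show
\[
\sum_\nu\|G_\nu g\|_{L^2(w)}^2\lesssim 2^{s\gamma(d-\frac32)}\|g\|_{L^2(w)}^2
\]
uniformly for $w=w_z$ with $[w]_{A_2}\le C$, which holds by Lemma \ref{lem:bmo-exponential-weight}.

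\textbf{Weighted square-function bound.} This step is the main obstacle. A direct weighted bound is easy. Each $m_\nu$ is homogeneous of degree zero, and $|\partial^\alpha m_\nu(\xi)|\lesssim 2^{s\gamma|\alpha|}|\xi|^{-|\alpha|}$. Lemma \ref{lem:hormander-mihlin-weighted-sec3} then gives $\|G_\nu\|_{L^2(w)}\lesssim 2^{s\gamma d}$, which is far too lossy.

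Instead we interpolate in the weight using Theorem \ref{thm:stein-weiss} and Remark \ref{rem:stein-weiss-p2}, applied to the $\ell^2$-valued operator $g\mapsto(G_\nu g)_\nu$.
\begin{enumerate}[(a)]
\item \emph{Unweighted endpoint.} Plancherel gives $\sum_\nu\|G_\nu g\|_2^2\lesssim 2^{s\gamma(d-2)}\|g\|_2^2$.
\item \emph{Weighted endpoint.} For $v=w^{2}$ we need $\sum_\nu\|G_\nu g\|^2_{L^2(v)}\lesssim 2^{s\gamma d}\|g\|_{L^2(v)}^2$, uniformly in $\nu$ up to the count $2^{s\gamma(d-1)}$, with a loss of at most $2^{s\gamma}$ per piece.
\end{enumerate}
The natural way to reach (b) is not the Mikhlin theorem. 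Write $G_\nu$ as a superposition of directional operators in the single variable $\langle x,e_\nu\rangle$: the multiplier is smooth at scale $2^{-s\gamma}$ in angle, and its kernel is an $L^1$-normalized average of one-dimensional Hilbert-type pieces along $e_\nu$. One then controls it by the directional maximal function and by directional Calder\'on--Zygmund operators, which are bounded on $L^2(w)$ for $w\in A_2$ (one-dimensional $A_2$ on almost every line follows from the full $A_2$ condition) with only polynomial loss $2^{Cs\gamma}$.

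Next, by the reverse Hölder and self-improvement property, $w^{1+\epsilon}\in A_2$ still holds when $c$ is small. Interpolating with $\theta$ small between (a) and this $A_2$ bound for $w^{1+\epsilon}$ lowers the loss to $2^{s\gamma(d-2)(1-\theta)+Cs\gamma\theta}$. Choosing $c$, and hence the admissible weight powers, appropriately yields the exponent $d-\tfrac32$; any exponent strictly below $d-1$ would in fact suffice downstream.

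\textbf{Conclusion.} Feeding the uniform weighted bound into the Cauchy-integral step proves the lemma. The delicate point is keeping the weighted loss polynomial in $2^{s\gamma}$ with a controlled exponent. If the directional route fails, we would fall back on the Hyt\"onen--P\'erez quantitative $A_2$ bound for each $G_\nu$, used as a rough Calder\'on--Zygmund operator with kernel constants $2^{s\gamma}$, and rely on the interpolation parameter to absorb the resulting loss.
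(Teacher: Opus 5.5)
There is a genuine gap in the weighted square-function step, which you rightly call the crux. Your analytic-family reduction is the paper's, and your closing fallback is essentially the paper's route. But neither interpolation scheme you actually write down produces the claimed exponent. Stein--Weiss with $p_0=p_1=2$ between weight $1$ (cost $2^{s\gamma(d-2)}$) and weight $v$ (cost $2^{s\gamma A}$) at parameter $\theta$ yields the weight $v^{\theta}$ with exponent $(d-2)(1-\theta)+A\theta$.

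\begin{itemize}
\item Plan (b) ($v=w^2$, $\theta=\tfrac12$, $A=d$) gives exponent $d-1$, not $d-\tfrac32$. As you note yourself, $d-1$ gives no decay against $\sum_\nu\|u_\nu\|_2^2\lesssim 2^{-s\gamma(d-1)}\cdots$ in \eqref{eq:lemma45-commutator-reduction}.
\item The refinement ``interpolate with $\theta$ small against $w^{1+\epsilon}$'' lands on the weight $w^{(1+\epsilon)\theta}$, not on $w$. To land on $w$ with small $\theta$ you need $v=w^{1/\theta}$, a \emph{large} power of $w$.
\item Reverse H\"older/self-improvement only gives $w^{1+\epsilon}$ with $\epsilon$ small. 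That forces $\theta=1/(1+\epsilon)$ close to $1$ and kills the gain.
\end{itemize}

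For the same reason, your reduction to weights with merely $[w_z]_{A_2}\le C$ is not the right hypothesis: what is needed is uniform $A_2$ control of a fixed large power of $w_z$.

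The missing idea is that these weights are exponentials of a BMO function. Since $w_z^{K}=e^{2K\operatorname{Re}(z)b}$, Lemma \ref{lem:bmo-exponential-weight} gives $[w_z^{K}]_{A_2}\lesssim 1$ uniformly on $|z|\le\rho$ once $\rho\approx c_d/(K\|b\|_{\BMO})$. Shrinking the Cauchy circle this way costs only a dimensional factor $K^2$ in $\rho^{-2}$.

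With this in hand, any polynomial weighted loss is harmless, so the Mikhlin bound you dismissed as ``far too lossy'' is exactly what is needed. It gives $\sum_\nu\|G_\nu g\|_{L^2(w)}^2\lesssim 2^{s\gamma(d-1+2d)}\|g\|_{L^2(w)}^2$ for $w\in A_2$. Interpolating with the Plancherel bound at $\theta_0=1/(4d+5)$, applied with $v=w_z^{4d+5}$, gives the exponent $(d-2)(1-\theta_0)+(3d-1)\theta_0=d-2+\frac{2d+1}{4d+5}<d-\tfrac32$. This is precisely the paper's argument.

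The directional decomposition is therefore unnecessary. Its promised per-piece loss of $2^{s\gamma}$ is not substantiated; your sketch only yields $2^{Cs\gamma}$, which is no better than Mikhlin for this purpose.
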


\begin{proof}
	
	We first obtain a weighted $L^2$ square-function estimate for $G_\nu$ by
	Stein-Weiss interpolation.  The analytic-family argument then gives the
	corresponding estimate for the commutators.  We begin with the unweighted
	bound.
	
	If $m_\nu(\xi)\neq0$, then
	\begin{equation*}
		\left|
		\left\langle e_\nu,\frac{\xi}{|\xi|}\right\rangle
		\right|
		\lesssim2^{-s\gamma}.
	\end{equation*}
	Since the directions $\{e_\nu\}_{\nu}$ are
	$2^{-s\gamma-4}$-separated, for each fixed $\xi\neq0$ there are at most
	$C_d2^{s\gamma(d-2)}$ directions for which $m_\nu(\xi)\neq0$.
	Hence,
	\begin{equation*}
		\sup_{\xi\neq0}
		\sum_\nu|m_\nu(\xi)|^2
		\lesssim2^{s\gamma(d-2)}.
	\end{equation*}
	By Plancherel's theorem,
	\begin{equation}\label{eq:lemma45-unweighted-G-square-function}
		\sum_\nu
		\|G_\nu  u\|_{L^2(\mathbb R^d)}^2
		\lesssim
		2^{s\gamma(d-2)}
		\|u\|_{L^2(\mathbb R^d)}^2.
	\end{equation}
	
	For the weighted estimate, set
	\begin{equation*}
		N \coloneqq d.
	\end{equation*}
	By the chain rule, for every
	$|\alpha|\leq N$ we have
	\begin{equation*}
		|\partial_\xi^\alpha m_\nu(\xi)|
		\lesssim_{\alpha}
		2^{s\gamma|\alpha|}|\xi|^{-|\alpha|}
		\lesssim_{d}
		2^{s\gamma N}|\xi|^{-|\alpha|}.
	\end{equation*}
	Thus, for every weight $w\in A_2$, the weighted Mikhlin multiplier theorem gives
	\begin{equation*}
		\|G_\nu u\|_{L^2(w)}
		\lesssim_{[w]_{A_2}}
		2^{s\gamma N}\|u\|_{L^2(w)}.
	\end{equation*}
	Using also
	$ \card \{e_\nu\} \lesssim 2^{s\gamma(d-1)}$, we obtain
	\begin{equation}	\label{eq:lemma45-weighted-rough-square-function}
		\begin{aligned}
			\sum_\nu\|G_\nu u\|_{L^2(w)}^2
			&\lesssim_{[w]_{A_2}}
			2^{s\gamma(d-1+2N)}
			\|u\|_{L^2(w)}^2 \\
			&\lesssim_{[w]_{A_2}}
			2^{s\gamma(3d-1)}
			\|u\|_{L^2(w)}^2.
		\end{aligned}
	\end{equation}

	We interpolate between \eqref{eq:lemma45-unweighted-G-square-function} and
	\eqref{eq:lemma45-weighted-rough-square-function}.  Set
	\begin{equation*}
		a_0 \coloneqq 4d+4,
		\qquad
		\theta_0 \coloneqq \frac{1}{1+a_0}=\frac{1}{4d+5}.
	\end{equation*}
	Suppose that $w^{1+a_0}\in A_2$, and regard
	$g\mapsto\{G_\nu g\}_\nu$ as an operator with values in $\ell^2$.  Apply the
	Stein-Weiss theorem to the unweighted bound and the weighted bound with
	weight $w^{1+a_0}$.  Since
	\begin{equation*}
		\big(w^{1+a_0}\big)^{\theta_0}=w,
	\end{equation*}
	we obtain
	\begin{equation}\label{eq:lemma45-interpolated-weighted-square-function}
		\sum_\nu\|G_\nu u\|_{L^2(w)}^2
		\lesssim_{[w^{1+a_0}]_{A_2}}
		2^{s\gamma a_*}
		\|u\|_{L^2(w)}^2,
	\end{equation}
	where
	\begin{align*}
		a_*
		& \coloneqq (d-2)(1-\theta_0)+(3d-1)\theta_0\\
		&=d-2+\frac{2d+1}{4d+5}
		<d-\frac32.
	\end{align*}
	
	We now pass to the commutators.  By Lemma
	\ref{lem:bmo-exponential-weight}, there exists $c_d>0$ such that, whenever
	\begin{equation*}
		|z| \leq \rho
		\coloneqq 
		\frac{c_d}{\|b\|_{\operatorname{BMO}(\mathbb R^d)}},
	\end{equation*}
	the weight
	\begin{equation*}
		w_z(x) \coloneqq e^{2\operatorname{Re}(z)b(x)}
	\end{equation*}
	satisfies $w_z^{1+a_0} \in A_2$ with a uniform bound, namely,
	\begin{equation}\label{eq:lemma45-exponential-weight}
		[w_z^{1+a_0}]_{A_2}\lesssim 1.
	\end{equation}
	For each $\nu$, define the analytic family
	\begin{equation*}
		F_\nu(z)  \coloneqq e^{zb}G_\nu (e^{-zb}u).
	\end{equation*}
	Then
	\begin{equation*}
		\frac{d}{dz}F_\nu(z) \bigg|_{z=0} = [b,G_\nu ]u.
	\end{equation*}
	By Cauchy's integral formula,
	\begin{equation*}
		[b,G_\nu]u
		=\frac{1}{2\pi i}
		\int_{|z|=\rho}
		\frac{F_\nu(z)}{z^2}\,dz.
	\end{equation*}
	Applying the Cauchy-Schwarz inequality to the contour integral and summing over $\nu$ yields
	\begin{align*}
		\sum_\nu\|[b,G_\nu ]u\|_2^2
		&\lesssim
		\rho^{-3}
		\int_{|z|=\rho}
		\sum_\nu\|F_\nu(z)\|_2^2\,|dz|\\
		&=
		\rho^{-3}
		\int_{|z|=\rho}
		\sum_\nu
		\|G_\nu (e^{-zb}u)\|_{L^2(w_z)}^2\,|dz|.
	\end{align*}
	By \eqref{eq:lemma45-interpolated-weighted-square-function} and
	\eqref{eq:lemma45-exponential-weight},
	\begin{equation*}
		\sum_\nu
		\|G_\nu (e^{-zb}u)\|_{L^2(w_z)}^2
		\lesssim
		2^{s\gamma(d-\frac32)}
		\|e^{-zb}u\|_{L^2(w_z)}^2.
	\end{equation*}
	Moreover,
	\begin{equation*}
		\|e^{-zb}u\|_{L^2(w_z)}^2
		=
		\int_{\mathbb R^d}
		|u(x)|^2e^{-2\operatorname{Re}(z)b(x)}
		e^{2\operatorname{Re}(z)b(x)}dx
		=\|u\|_2^2.
	\end{equation*}
	Consequently,
	\begin{align*}
		\sum_\nu\|[b,G_\nu ]u\|_2^2
		&\lesssim
		\rho^{-2}
		2^{s\gamma(d-\frac32)}\|u\|_2^2\\
		&\lesssim
		2^{s\gamma(d-\frac32)}
		\|b\|_{\operatorname{BMO}}^2
		\|u\|_2^2.
	\end{align*}
	This proves the lemma.
	
\end{proof}

We return to $\Esn$.  Let $u\in L^2(\Rd)$ with $\norm u_2=1$.
Cauchy-Schwarz in $\nu$ and Lemma \ref{lem:lemma45-commutator-micro} give
\begin{align*}
	\big|\langle\mathcal E_{s,n,\iota},u\rangle\big|
	&=
	\bigg|
	\sum_\nu
	\langle u_\nu,-[b,G_\nu ]u\rangle
	\bigg|
	\notag\\
	&\leq
	\Big(
	\sum_\nu\|u_\nu\|_2^2
	\Big)^{1/2}
	\Big(
	\sum_\nu\|[b,G_\nu ]u\|_2^2
	\Big)^{1/2}
	\notag\\
	&\lesssim
	2^{\frac12s\gamma(d-\frac32)}
	\|b\|_{\operatorname{BMO}}
	\Big(
	\sum_\nu\|u_\nu\|_2^2
	\Big)^{1/2}.
\end{align*}
Hence,
\begin{equation}\label{eq:lemma45-commutator-reduction}
	\|\mathcal E_{s,n,\iota}\|_2^2
	\lesssim
	2^{s\gamma(d-\frac32)}
	\|b\|_{\operatorname{BMO}}^2
	\sum_\nu\|u_\nu\|_2^2.
\end{equation}
The estimate \cite[(4.29)]{Lai2025} gives, uniformly in $\nu$,
\begin{equation*}
	\|u_\nu\|_{L^2(\mathbb R^d)}^2 \lesssim 2^{-2s\gamma(d-1)} \|\Omega\|_{L^\infty(\mathbb S^{d-1})}^2 \sum_{J\in\mathcal I_{s,\iota}^{\#,n}}
	\sum_{Q\in\mathfrak Q_{J-s}}
	\|h_Q\|_1,
\end{equation*}
Using also $\card \{e_\nu\} \lesssim 2^{s\gamma(d-1)}$, we obtain

\begin{equation}\label{eq:lemma45-all-directions-u}
	\sum_\nu\|u_\nu\|_2^2
	\lesssim
	2^{-s\gamma(d-1)}
	\|\Omega\|_{L^\infty(\mathbb S^{d-1})}^2
	\sum_{J\in\mathcal I_{s,\iota}^{\#,n}}
	\sum_{Q\in\mathfrak Q_{J-s}}
	\|h_Q\|_1.
\end{equation}
Substituting this into \eqref{eq:lemma45-commutator-reduction} gives
\begin{align*}
	\|\mathcal E_{s,n,\iota}\|_2^2
	\lesssim{}&
	2^{-s\gamma/2}
	\|\Omega\|_{L^\infty(\mathbb S^{d-1})}^2
	\|b\|_{\operatorname{BMO}(\mathbb R^d)}^2
	\notag\\
	&\quad\times
	\sum_{J\in\mathcal I_{s,\iota}^{\#,n}}
	\sum_{Q\in\mathfrak Q_{J-s}}
	\|h_Q\|_{L^1(\mathbb R^d)}.
\end{align*}
By Lemma \ref{lem:layered-bad-mass} and $\|\Omega\|_{L^\infty(\mathbb S^{d-1})} \leq 2^{\eta s}$, we obtain
\begin{equation*}
	\|\mathcal{E}_{s,n,\iota}\|_{L^2(\mathbb R^d)}
	\lesssim 2^{-n}2^{-s(\frac{\gamma}{4} - \eta)} \|b\|_{\operatorname{BMO}(\mathbb R^d)}
	\|f\|_{L^1(\mathbb R^d)}^{1/2}.
\end{equation*}
Taking $0<\eta<\gamma/4$ proves Lemma
\ref{lem:sec42-commutator-microlocal}.

\end{proof}

\section{Proof of Lemma \ref{lem:sec42-remainder-microlocal} and Theorem \ref{thm:main}}
\label{sec:proof-lemma47}

\begin{proof}[Proof of Lemma \ref{lem:sec42-remainder-microlocal} ]
We prove Lemma \ref{lem:sec42-remainder-microlocal} by interpolating a rough
$L^3$ bound with an $L^1$ bound having quantitative decay.  As in the preceding
section, set
\begin{equation*}
	u_{J,Q,\nu}
	\coloneqq 
	(I-P_{j-s\kappa})
	\Big[
	\varepsilon_J(T_{j,\nu} h_Q)
	\widetilde\chi_{J^\iota}
	\Big],
\end{equation*}
and let
\begin{equation*}
	\mathcal E_{s,n,\iota}
	\coloneqq 
	\sum_{J\in\mathcal I_{s,\iota}^{\#,n}}
	\sum_{Q\in\mathfrak Q_{J-s}}
	\sum_\nu
	(b-b_Q)(I-G_\nu)u_{J,Q,\nu}.
\end{equation*}
In what follows, $\sum_{J,Q}$ denotes
$\sum_{J\in\mathcal I_{s,\iota}^{\#,n}}
\sum_{Q\in\mathfrak Q_{J-s}}$. 

\noindent{\bf $L^3$ estimate.}
Set
\begin{equation*}
	N \coloneqq d.
\end{equation*}
We claim that
\begin{equation}
	\begin{aligned}
		\norm{\Esn}_{L^3(\Rd)} \lesssim
		2^{s[\gamma(N+d-1)+\eta]}
		2^{-2n/3}
		\norm{b}_{\BMO(\Rd)}
		\norm{f}_{L^1(\Rd)}^{1/3}.
	\end{aligned}
	\label{eq:lemma46-L3}
\end{equation}
Using the identity
\begin{equation*}
	(b-b_Q)(I-G_\nu)u
	=
	(I-G_\nu)\big[(b-b_Q)u\big]
	-[b,G_\nu]u,
\end{equation*}
we decompose
\begin{equation}
	\mathcal E_{s,n,\iota}
	=
	\mathcal E_{s,n,\iota}^{(1)}
	-\mathcal E_{s,n,\iota}^{(2)},
	\label{eq:lemma46-E-splitting}
\end{equation}
where
\begin{equation*}
	\begin{aligned}
		\mathcal E_{s,n,\iota}^{(1)}
		& \coloneqq {}
		\sum_\nu
		(I-G_\nu)
		\left[
		\sum_{J\in\mathcal I_{s,\iota}^{\#,n}}
		\sum_{Q\in\mathfrak Q_{J-s}}
		(b-b_Q)u_{J,Q,\nu}
		\right],
		\\
		\mathcal E_{s,n,\iota}^{(2)}
		& \coloneqq {}
		\sum_\nu
		[b,G_\nu]
		\left[
		\sum_{J\in\mathcal I_{s,\iota}^{\#,n}}
		\sum_{Q\in\mathfrak Q_{J-s}}
		u_{J,Q,\nu}
		\right].
	\end{aligned}
\end{equation*}
We estimate these terms separately.

For the first term, a direct calculation and the support properties give
\begin{equation*}
	|u_{J,Q,\nu}(x)|
	\lesssim
	2^{-jd}\norm{\Omega}_{L^\infty(\Sd)}
	\norm{h_Q}_{L^1(\Rd)}\chi_{4J}(x).
\end{equation*}
Arguing as in the $L^3$ estimates of Lemmas \ref{lem:sec42-boundary-error} and \ref{lem:sec42-low-frequency}, we obtain
\begin{equation}
	\begin{aligned}
		\norm{
			\sum_{J,Q}(b-b_Q)u_{J,Q,\nu}
		}_{L^3(\Rd)}
		\lesssim{}&
		s2^{-2n/3}
		\norm{\Omega}_{L^\infty(\Sd)}
		\\
		&\times
		\norm{b}_{\BMO(\Rd)}
		\norm{f}_{L^1(\Rd)}^{1/3}.
	\end{aligned}
	\label{eq:lemma46-bU-final}
\end{equation}
The multiplier of $I-G_\nu$ satisfies the same Mikhlin condition; hence
\begin{equation*}
	\norm{(I-G_\nu)g}_{L^3(\Rd)}
	\lesssim
	2^{s\gamma N}\norm{g}_{L^3(\Rd)}.
\end{equation*}
Combined with \eqref{eq:lemma46-bU-final}, this gives
\begin{equation}
	\begin{aligned}
		\norm{\mathcal E_{s,n,\iota}^{(1)}}_{L^3(\Rd)}
		&\leq
		\sum_\nu
		\norm{
			(I-G_\nu)
			\left[
			\sum_{J,Q}(b-b_Q)u_{J,Q,\nu}
			\right]
		}_{L^3(\Rd)}
		\\
		&\lesssim
		(1+s)2^{s\gamma(N+d-1)}
		\norm{\Omega}_{L^\infty(\Sd)}
		2^{-2n/3}
		\\
		&\qquad\times
		\norm{b}_{\BMO(\Rd)}
		\norm{f}_{L^1(\Rd)}^{1/3}.
	\end{aligned}
	\label{eq:lemma46-Ea-final}
\end{equation}

For the second term, we first establish the commutator bound
\begin{equation}
	\norm{[b,G_\nu]g}_{L^3(\Rd)}
	\lesssim
	2^{s\gamma N}
	\norm{b}_{\BMO(\Rd)}
	\norm{g}_{L^3(\Rd)}.
	\label{eq:lemma46-commutator-L3}
\end{equation}

Indeed, Lemma \ref{lem:bmo-exponential-weight} allows us to choose
	\begin{equation*}
		\rho
		\coloneqq 
		\frac{c_d}{\norm{b}_{\BMO}},
	\end{equation*}
	so that, whenever $|z|\leq\rho$,
	\begin{equation*}
		w_z(x) \coloneqq e^{3\operatorname{Re}(z)b(x)}
	\end{equation*}
	satisfies
	\begin{equation}
		[w_z]_{A_3}\lesssim1.
		\label{eq:lemma46-exponential-A3}
	\end{equation}
	
	Define the analytic family
	\begin{equation*}
		F_\nu(z)
		\coloneqq 
		e^{zb}G_\nu(e^{-zb}g).
	\end{equation*}
	The Fourier multiplier of $G_\nu$ is
	\begin{equation*}
		m_\nu(\xi)
		\coloneqq 
		\Psi\left(
		2^{s\gamma}
		\left\langle e_\nu,\frac{\xi}{|\xi|}\right\rangle
		\right),
	\end{equation*}
	By the chain rule and the homogeneity of degree zero of $\xi\mapsto\xi/|\xi|$,
	for every $|\alpha|\leq N$ we have
	\begin{equation*}
		|\partial_\xi^\alpha m_\nu(\xi)|
		\lesssim_{d,\alpha}
		2^{s\gamma|\alpha|}|\xi|^{-|\alpha|}
		\lesssim
		2^{s\gamma N}|\xi|^{-|\alpha|}.
	\end{equation*}
	The weighted Mikhlin multiplier theorem and \eqref{eq:lemma46-exponential-A3} yield
	\begin{equation*}
		\begin{aligned}
			\norm{F_\nu(z)}_{L^3(\Rd)}
			&=
			\norm{G_\nu(e^{-zb}g)}_{L^3(w_z)}
			\\
			&\lesssim
			2^{s\gamma N}
			\norm{e^{-zb}g}_{L^3(w_z)}
			\\
			&=
			2^{s\gamma N}
			\norm{g}_{L^3(\Rd)}.
		\end{aligned}
	\end{equation*}
	Finally, Cauchy's integral formula and Minkowski's inequality give
	\begin{equation*}
		\begin{aligned}
			\norm{[b,G_\nu]g}_{L^3(\Rd)}
			&\leq
			\frac1{2\pi}
			\int_{|z|=\rho}
			\frac{\norm{F_\nu(z)}_{L^3(\Rd)}}{|z|^2}|d z|
			\\
			&\leq
			\rho^{-1}
			\sup_{|z|=\rho}
			\norm{F_\nu(z)}_{L^3(\Rd)}
			\\
			&\lesssim
			2^{s\gamma N}
			\norm{b}_{\BMO(\Rd)}
			\norm{g}_{L^3(\Rd)}.
		\end{aligned}
	\end{equation*}
This proves \eqref{eq:lemma46-commutator-L3}.

As in the $L^3$ estimates of Lemmas \ref{lem:sec42-boundary-error} and \ref{lem:sec42-low-frequency}, we have
\begin{equation}
	\norm{
		\sum_{J\in\mathcal I_{s,\iota}^{\#,n}}
		\sum_{Q\in\mathfrak Q_{J-s}}u_{J,Q,\nu}
	}_{L^3(\Rd)}
	\lesssim s
	2^{-2n/3}\norm{\Omega}_{L^\infty(\Sd)}
	\norm{f}_{L^1(\Rd)}^{1/3},
	\label{eq:lemma46-U-final}
\end{equation}

By
\eqref{eq:lemma46-commutator-L3} and
\eqref{eq:lemma46-U-final},
\begin{equation}
	\begin{aligned}
		\norm{\mathcal E_{s,n,\iota}^{(2)}}_{L^3(\Rd)}
		&\leq
		\sum_\nu
		\norm{
			[b,G_\nu]
			\left[
			\sum_{J,Q}u_{J,Q,\nu}
			\right]
		}_{L^3(\Rd)}
		\\
		&\lesssim
		2^{s\gamma N}\norm{b}_{\BMO(\Rd)}
		\sum_\nu
		\norm{
			\sum_{J,Q}u_{J,Q,\nu}
		}_{L^3(\Rd)}
		\\
		&\lesssim
		2^{s\gamma N}
		\card(\Theta_s)
		\norm{\Omega}_{L^\infty(\Sd)}
		2^{-2n/3}
		\norm{b}_{\BMO(\Rd)}
		\norm{f}_{L^1(\Rd)}^{1/3},
	\end{aligned}
	\label{eq:lemma46-Eb-before-direction-count}
\end{equation}
Here $\sum_{J,Q}$ has the meaning fixed above.  Since
$\card(\Theta_s)\lesssim2^{s\gamma(d-1)}$ and
$\norm{\Omega}_{L^\infty}\leq2^{s\eta}$, we obtain
\begin{equation}
	\begin{aligned}
		\norm{\mathcal E_{s,n,\iota}^{(2)}}_{L^3(\Rd)}
		\lesssim &
		2^{s\gamma(N+d-1)}
		\norm{\Omega}_{L^\infty(\Sd)}
		2^{-2n/3}
		\\
		&\times
		\norm{b}_{\BMO(\Rd)}
		\norm{f}_{L^1(\Rd)}^{1/3}.
	\end{aligned}
	\label{eq:lemma46-Eb-final}
\end{equation}

Combining \eqref{eq:lemma46-E-splitting}, \eqref{eq:lemma46-Ea-final}, and \eqref{eq:lemma46-Eb-final} gives
\begin{equation*}
	\begin{aligned}
		\norm{\mathcal E_{s,n,\iota}}_{L^3(\Rd)}
		\lesssim{}&
		s2^{s\gamma(N+d-1)}
		\norm{\Omega}_{L^\infty(\Sd)}
		2^{-2n/3}
		\\
		&\times
		\norm{b}_{\BMO(\Rd)}
		\norm{f}_{L^1(\Rd)}^{1/3}.
	\end{aligned}
\end{equation*}
Using $\norm{\Omega}_{L^\infty(\Sd)}\lesssim2^{s\eta}$, we further obtain
\begin{equation*}
	\begin{aligned}
		\norm{\mathcal E_{s,n,\iota}}_{L^3(\Rd)}
		\lesssim{}&
		2^{s[\gamma(N+d-1)+\eta]}
		2^{-2n/3}
		\norm{b}_{\BMO(\Rd)}
		\norm{f}_{L^1(\Rd)}^{1/3}.
	\end{aligned}
\end{equation*}
This is \eqref{eq:lemma46-L3}.
\vspace{0.3cm}

\noindent{\bf $L^1$ estimate.}
Fix an auxiliary parameter $\varepsilon$ satisfying
\begin{equation*}
	0<\kappa<\varepsilon<1.
\end{equation*}
Choose a radial Littlewood-Paley cutoff
$\psi\in C_c^\infty(\mathbb R^d\setminus\{0\})$ satisfying
\begin{equation*}
	\begin{aligned}
		& \operatorname{supp}\psi
		\subset
		\left\{\xi:\frac12\leq|\xi|\leq2\right\}, \\
		& \quad \sum_{k\in\mathbb Z}\psi(2^k\xi)^2=1,
	\end{aligned}
\end{equation*}
Define the corresponding convolution operators by
\begin{equation*}
	\begin{aligned}
		\widehat{\Lambda_kg}(\xi) &  \coloneqq 
		\psi_k(\xi)\widehat g(\xi), \\
		\psi_k(\xi) &  \coloneqq \psi(2^k\xi).
	\end{aligned}
\end{equation*}
Since $I=\sum_k\Lambda_k^2$, we have
\begin{equation*}
	\begin{aligned}
		(I-G_\nu)(I-P_{j-s\kappa}) = \sum_{k \in \Z} (I-G_\nu) \Lambda_k^2 (I-P_{j-s\kappa})
	\end{aligned}
\end{equation*}

Define the composite multiplier
\begin{equation*}
	\begin{aligned}
		M_{j,k,\nu}(\xi)
		\coloneqq  & \left[
		1- \Psi\left(
		2^{s\gamma}
		\left\langle e_\nu,\frac{\xi}{\abs\xi}\right\rangle
		\right)
		\right] 
		\psi_k(\xi)^2
		\\
		&\times \left[1-\widehat\varpi\big(2^{j-s\kappa}\xi\big)\right]
		,
	\end{aligned}
\end{equation*}
This is the multiplier of
$(I-G_\nu)\Lambda_k^2(I-P_{j-s\kappa})$.  We require the following derivative
bound.
\begin{lemma}
	\label{lem:sec6-M-derivative-bound}
	For every $|\beta| \leq 2N$,
	\begin{equation*}
		\abs{\partial_\xi^\beta M_{j,k,\nu}(\xi)}
		\lesssim \min\{1,2^{j-s\kappa-k}\}
		2^{(s\gamma+k)\abs\beta}.
	\end{equation*}
\end{lemma}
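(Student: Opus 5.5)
The plan is to write $M_{j,k,\nu}=A_\nu\cdot B_k\cdot C_j$ with
\[
A_\nu(\xi)=1-\Psi\bigl(2^{s\gamma}\langle e_\nu,\xi/|\xi|\rangle\bigr),\qquad B_k(\xi)=\psi(2^k\xi)^2,\qquad C_j(\xi)=1-\widehat\varpi(2^{j-s\kappa}\xi),
\]
and apply the Leibniz rule, so that $\partial^\beta M_{j,k,\nu}$ is a finite sum, with constants depending only on $\beta$, of products $\partial^{\beta_1}A_\nu\,\partial^{\beta_2}B_k\,\partial^{\beta_3}C_j$ with $\beta_1+\beta_2+\beta_3=\beta$. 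All three factors will be evaluated only on $\supp B_k\subset\{2^{-k-1}\leq|\xi|\leq 2^{1-k}\}$, where $|\xi|\approx 2^{-k}$. The bound $2^{(s\gamma+k)|\beta|}$ therefore means that each derivative costs at most $2^{s\gamma}|\xi|^{-1}$, which I will check factor by factor.

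For $A_\nu$, the chain rule bound already used for $m_\nu$ in the proof of Lemma \ref{lem:lemma45-commutator-micro} (valid for any order, by homogeneity of degree zero of $\xi/|\xi|$) gives $|\partial^{\beta_1}A_\nu(\xi)|\lesssim 2^{s\gamma|\beta_1|}|\xi|^{-|\beta_1|}\approx 2^{(s\gamma+k)|\beta_1|}$, and $|A_\nu|\leq 1$. For $B_k$, scaling gives $|\partial^{\beta_2}B_k|\lesssim 2^{k|\beta_2|}$. For $C_j$, note $\widehat\varpi$ is Schwartz, so $\partial^{\beta_3}C_j(\xi)=-2^{(j-s\kappa)|\beta_3|}(\partial^{\beta_3}\widehat\varpi)(2^{j-s\kappa}\xi)$ for $\beta_3\neq0$.

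The factor $\min\{1,2^{j-s\kappa-k}\}$ comes from splitting into two cases. If $2^{j-s\kappa-k}\geq1$, i.e.\ $2^{j-s\kappa}|\xi|\gtrsim1$, use $|\partial^{\beta_3}\widehat\varpi(\eta)|\lesssim|\eta|^{-|\beta_3|}$ for $|\eta|\gtrsim1$ (rapid decay). This gives $|\partial^{\beta_3}C_j|\lesssim|\xi|^{-|\beta_3|}\approx2^{k|\beta_3|}$ and $|C_j|\lesssim1$. If $2^{j-s\kappa-k}<1$, use $\widehat\varpi(0)=\int\varpi=1$, so by Taylor $|C_j(\xi)|\lesssim 2^{j-s\kappa}|\xi|\approx 2^{j-s\kappa-k}$. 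Likewise $|\partial^{\beta_3}C_j|\lesssim 2^{(j-s\kappa)|\beta_3|}\leq 2^{j-s\kappa-k}\,2^{k|\beta_3|}$ when $|\beta_3|\geq1$, since $2^{(j-s\kappa)(|\beta_3|-1)}\leq 2^{k(|\beta_3|-1)}$. In both cases
\[
|\partial^{\beta_3}C_j|\lesssim\min\{1,2^{j-s\kappa-k}\}\,2^{k|\beta_3|}.
\]

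Multiplying the three bounds and using $2^{k|\beta_2|},2^{k|\beta_3|}\leq 2^{(s\gamma+k)(|\beta_2|+|\beta_3|)}$ yields the lemma, with constants depending on $d$ and $|\beta|\leq2N$. The only step needing care is the small-frequency case for $C_j$, where the vanishing of $1-\widehat\varpi$ at the origin must be combined with the derivative bounds. This works because $\varpi$ is compactly supported with unit mass, so $\widehat\varpi$ is smooth with bounded derivatives and $\widehat\varpi(0)=1$. I do not expect a real obstacle.
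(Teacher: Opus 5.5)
Your proof is correct and follows essentially the same route as the paper: the product rule over the three factors, the chain-rule bound for the directional cutoff on $|\xi|\approx 2^{-k}$, and, for $1-\widehat\varpi(2^{j-s\kappa}\xi)$, the mean value theorem at low frequency combined with the Schwartz decay of $\widehat\varpi$. Your two cases $2^{j-s\kappa-k}\geq 1$ and $2^{j-s\kappa-k}<1$ correspond exactly to the paper's choices $L=|\beta|$ and $L=|\beta|-1$ in the decay bound.
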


\begin{proof}
	Direct differentiation gives
	\begin{equation*}
		\abs{\partial_\xi^\beta\psi_k(\xi)}
		\lesssim2^{k\abs\beta}.
	\end{equation*}
	The derivatives of the directional cutoff multiplier satisfy
	\begin{equation*}
		\left|
		\partial_\xi^\beta
		\Psi\left(
		2^{s\gamma}
		\left\langle e_\nu,\frac{\xi}{\abs\xi}\right\rangle
		\right)
		\right|
		\lesssim
		2^{(s\gamma+k)\abs\beta},
	\end{equation*}
	on the region $\abs\xi \approx 2^{-k}$. For the factor arising from $I-P_{j-s\kappa}$, when $\beta = 0$ we have 
	\begin{equation*}
		\| 1 - \widehat{\varpi}\|_{L^\infty} \leq 2.
	\end{equation*}
	Moreover, the mean value theorem gives
	\begin{equation*}
		|1 - \widehat{\varpi}(2^{j-s\kappa} \xi)| \leq 2^{j-s\kappa-k} \| \partial \widehat{\varpi} \|_{L^\infty}.
	\end{equation*}
	When $|\beta| \geq 1$, the Schwartz decay of $\widehat\varpi$ gives 
	\begin{equation*}
		\begin{aligned}
			\left|
			\partial_\xi^\beta
			\widehat\varpi\big(2^{j-s\kappa}\xi\big)
			\right| & \lesssim 2^{(j-s\kappa)\abs\beta}
			\left(1+2^{j-s\kappa}\abs\xi\right)^{-L} \\
			& \lesssim 2^{(j-s\kappa)(|\beta|-L)} |\xi|^{-L}.
		\end{aligned}
	\end{equation*}
	Applying these estimates with $L=|\beta|$ and $L=|\beta|-1$, as
	appropriate, and using $|\xi|\approx2^{-k}$ yields
	\begin{equation*}
		\left|
		\partial_\xi^\beta
		\widehat\varpi\big(2^{j-s\kappa}\xi\big)
		\right|
		\lesssim \min\{1,2^{j-s\kappa-k}\} 2^{k|\beta|}.
	\end{equation*}
	Combining these estimates with the product rule proves the lemma.	
\end{proof}

Let $D_{j,k,\nu}(x,y)$ denote the kernel obtained by applying
$(I-G_\nu) \Lambda_k^2 \big(I-P_{j-s\kappa}\big)$
to $\big(T_{j,\nu} h_Q\big)\widetilde\chi_{J^\iota}$; thus
\begin{equation}
	\begin{aligned}
		& (I-G_\nu) 	\Lambda_k^2 \big(I-P_{j-s\kappa}\big)
		\left[
		\big(T_{j,\nu} h_Q\big)
		\widetilde\chi_{J^\iota}
		\right](x)
		\\
		&\qquad=:
		\int_Q D_{j,k,\nu}(x,y)h_Q(y)\,\dd y,
	\end{aligned}
	\label{eq:lemma45-high-kernel-action}
\end{equation}
where
\begin{equation*}
	D_{j,k,\nu}(x,y)
	=C\int_{\Rd}e^{i\langle x-w,\xi\rangle}M_{j,k,\nu}(\xi)
	\int_{\Rd}\mathcal K_{j,\nu}(w-y)
	\widetilde\chi_{J^\iota}(w)\,\dd w\,\dd\xi.
\end{equation*}
Making the change of variables $w-y=r\theta$ with $\theta\in E_\nu$, we obtain
\begin{equation}
	\begin{aligned}
		D_{j,k,\nu}(x,y)
		=C
		\int_{E_\nu}\Omega(\theta)
		\int_{\Rd}\int_0^\infty
		&e^{ i\langle x-y-r\theta,\xi\rangle}
		M_{j,k,\nu}(\xi)
		\\
		&\times
		\frac{\varphi_j(r)}{r}
		\widetilde\chi_{J^\iota}(y+r\theta)
		\,\dd r\,\dd\xi\,\dd\sigma(\theta).
	\end{aligned}
	\label{eq:lemma46-D}
\end{equation}

We shall also use the following weighted spatial-kernel estimate.
\begin{lemma}
	\label{lem:sec6-BMO-kernel}
	Let $y\in Q$ and $r\approx2^j$.  Suppose that either $z=y+r\theta$ or
	$z=y_t+r\theta$, where
	\begin{equation*}
		y_t=ty+(1-t)y_Q,
		\quad 0\leq t\leq1.
	\end{equation*}
	Then
	\begin{equation*}
		\int_{\Rd}
		\frac{\abs{b(x)-b_Q}}
		{\left(1+2^{-2k}\abs{x-z}^{2}\right)^N}
		\,\dd x
		\lesssim
		2^{kd}\big(s+\abs{j-k}\big)\norm{b}_{\BMO}.
	\end{equation*}	
\end{lemma}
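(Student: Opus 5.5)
The plan is to decompose $\Rd$ dyadically around $z$ at scale $2^k$ and control the BMO oscillation on each piece by telescoping averages. Set $A_0\coloneqq B(z,2^k)$ and $A_m\coloneqq B(z,2^{k+m})\setminus B(z,2^{k+m-1})$ for $m\geq1$. On $A_m$ the weight satisfies $(1+2^{-2k}|x-z|^2)^{-N}\lesssim 2^{-2mN}$, and $2N=2d>d$, so
\begin{equation*}
	\int_{\Rd}\frac{|b(x)-b_Q|}{(1+2^{-2k}|x-z|^2)^N}\,\dd x
	\lesssim
	\sum_{m\geq0}2^{-2mN}\int_{B_m}|b(x)-b_Q|\,\dd x,
\end{equation*}
where $B_m\coloneqq B(z,2^{k+m})$, or a cube $R_m$ centred at $z$ with $\ell(R_m)\approx2^{k+m}$. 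It therefore suffices to show
\begin{equation*}
	\frac{1}{|R_m|}\int_{R_m}|b-b_Q|\,\dd x\lesssim \big(s+|j-k|+m\big)\Bnorm .
\end{equation*}
Once this holds, summing against $2^{-2mN}|R_m|\approx 2^{kd}2^{m(d-2N)}$ converges because $N=d$, and the factor $m$ is absorbed. The result is $2^{kd}(s+|j-k|)\Bnorm$.

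For the local bound, write $|b-b_Q|\leq|b-b_{R_m}|+|b_{R_m}-b_Q|$. The first term averages to at most $\Bnorm$. The second is the key geometric comparison of two averages. The point $z$ lies within distance $\lesssim2^j$ of $Q$, because $y$ or $y_t$ lies in $Q$ and $r\approx2^j$. Also $\ell(Q)=2^{j-s}$. Let $S$ be a cube containing both $Q$ and $R_m$ with $\ell(S)\approx\max(2^j,2^{k+m})$. The standard estimate $|b_{R}-b_{R'}|\lesssim(1+\log(\ell(R')/\ell(R)))\Bnorm$ for $R\subset R'$ then gives
\begin{equation*}
	|b_Q-b_S|\lesssim\big(1+\log_2(\ell(S)/2^{j-s})\big)\Bnorm\lesssim\big(s+(k+m-j)_++1\big)\Bnorm,
\end{equation*}
and
\begin{equation*}
	|b_{R_m}-b_S|\lesssim\big(1+(j-k-m)_+\big)\Bnorm.
\end{equation*}
Since $(k+m-j)_+ + (j-k-m)_+\leq|j-k|+m$, the claim follows.

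The main obstacle is the case $k+m<j$, where the ball $R_m$ is much smaller than $J$ and may lie far from $Q$ relative to its own size. Here one cannot compare $R_m$ directly with $Q$; both must be enlarged to a common cube of size $2^j$. This is where the $|j-k|$ term is forced. It also requires using that $z$ lies within $O(2^j)$ of $Q$, which holds in both cases $z=y+r\theta$ and $z=y_t+r\theta$. Everything else is a routine telescoping argument.
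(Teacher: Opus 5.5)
Your proof is correct and follows essentially the paper's argument: a dyadic annular decomposition around $z$ at scale $2^k$, together with comparison of BMO averages through a common cube of side $\approx 2^{\max\{j,k\}}$ containing $Q$ and the scale-$2^k$ cube at $z$. The only difference is bookkeeping. The paper bounds $|b_Q-b_R|$ once at the base scale and gets the $(m+1)$ growth from $|b-b_R|$ on $2^{m+1}R$, whereas you compare $b_{R_m}$ with $b_Q$ directly on each annulus using an $m$-dependent enclosing cube.
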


\begin{proof}
	Let $R=R_k(z)$ be the cube centered at $z$ with side length comparable to
	$2^k$.
	Choose a cube $S$ such that
	\begin{equation*}
		Q\cup R\subset S,
		\qquad
		\ell(S)\approx2^{\max\{j,k\}}.
	\end{equation*}
	Since $\ell(Q)=2^{j-s}$, we have
	\begin{equation*}
		\begin{aligned}
			\abs{b_Q-b_R}
			&\leq
			\abs{b_Q-b_S}+\abs{b_S-b_R}
			\\
			&\lesssim
			\big(s+\abs{j-k}\big)\norm{b}_{\BMO}.
		\end{aligned}
	\end{equation*}
	A dyadic annular decomposition centered at $R$, together with $2N>d$, gives
	\begin{equation*}
		\begin{aligned}
			&\int_{\Rd}
			\frac{\abs{b(x)-b_R}}
			{\left(1+2^{-2k}\abs{x-z}^{2}\right)^N}
			\,\dd x
			\\
			&\quad\lesssim
			\sum_{m=0}^\infty
			2^{-2Nm}
			\int_{2^{m+1}R}\abs{b(x)-b_R}\,\dd x
			\\
			&\quad\lesssim
			\sum_{m=0}^\infty
			2^{-2Nm}(m+1)2^{md}\abs R\norm{b}_{\BMO}
			\\
			&\quad\lesssim
			2^{kd}\norm{b}_{\BMO}.
		\end{aligned}
	\end{equation*}
	Combining this with
	\begin{equation*}
		\int_{\Rd}
		\left(1+2^{-2k}\abs{x-z}^{2}\right)^{-N}\,\dd x
		\lesssim2^{kd}
	\end{equation*}
	and
	$\abs{b-b_Q}\leq\abs{b-b_R}+\abs{b_R-b_Q}$,
	proves the lemma.
\end{proof}

We first consider the high-frequency contribution
\begin{equation}
	\begin{aligned}
		\mathcal E_{s,n,\iota}^{\mathrm{high}}
		\coloneqq  \sum_{J,Q} \sum_\nu
		\sum_{k\leq j-s\varepsilon}
		&(b-b_Q)
		\big(I-P_{j-s\kappa}\big)
		\Lambda_k^2
		(I-G_\nu) \big[\varepsilon_J(T_{j,\nu} h_Q)\widetilde\chi_{J^\iota} \big].
	\end{aligned}
	\label{eq:lemma45-high-definition}
\end{equation}
We combine the three Fourier multipliers and estimate the kernel of their
composition directly.  Minkowski's inequality gives
\begin{equation*}
	\begin{aligned}
		\norm{\mathcal E_{s,n,\iota}^{\mathrm{high}}}_{L^1(\Rd)}
		\leq
		\sum_{J,Q} \sum_\nu
		\sum_{k\leq j-s\varepsilon}
		\norm{
			(b-b_Q)		\big(I-G_\nu\big) 	\Lambda_k^2 \big(I-P_{j-s\kappa}\big) \big[ (T_{j,\nu} h_Q)\widetilde\chi_{J^\iota} \big]
		}_{L^1(\Rd)}.
	\end{aligned}
\end{equation*}

Let $N_1$ be a sufficiently large positive integer.  Integrating by parts
$N_1$ times in $r$ in \eqref{eq:lemma46-D} gives
\begin{equation*}
	\begin{aligned}
		&\int_0^\infty
		e^{- i r\langle\theta,\xi\rangle}
		\frac{\varphi_j(r)}{r}
		\widetilde\chi_{J^\iota}(y+r\theta)\dd r
		\\
		&\quad=
		\big( i\langle\theta,\xi\rangle\big)^{-N_1}
		\int_0^\infty
		e^{- i r\langle\theta,\xi\rangle}
		\partial_r^{N_1}
		\left[
		\frac{\varphi_j(r)}{r}
		\widetilde\chi_{J^\iota}(y+r\theta)
		\right]\dd r.
	\end{aligned}
\end{equation*}

The Leibniz rule gives
\begin{equation}
	\begin{aligned}
		&\partial_r^{N_1}
		\left[
		\frac{\varphi_j(r)}{r}
		\widetilde\chi_{J^\iota}(y+r\theta)
		\right]
		\\
		&\quad=
		\sum_{a=0}^{N_1}
		\binom{N_1}{a}
		\partial_r^a
		\left[\widetilde\chi_{J^\iota}(y+r\theta)\right]
		\partial_r^{N_1-a}
		\left[\frac{\varphi_j(r)}{r}\right].
	\end{aligned}
	\label{eq:lemma45-high-Leibniz-r}
\end{equation}
Since $r \approx 2^j$, the derivative bounds for the smooth cutoff give 
\begin{equation*}
	\left|
	\partial_r^a
	\left[\widetilde\chi_{J^\iota}(y+r\theta)\right]
	\right|
	\lesssim_a
	2^{-a(j-s\kappa)},
\end{equation*}
and
\begin{equation*}
	\left|
	\partial_r^{N_1-a}
	\left[\frac{\varphi_j(r)}{r}\right]
	\right|
	\lesssim_{N_1}
	2^{-j(N_1-a+1)}.
\end{equation*}
Thus \eqref{eq:lemma45-high-Leibniz-r} yields
\begin{equation}
	\begin{aligned}
		\left|
		\partial_r^{N_1}
		\left[
		\frac{\varphi_j(r)}{r}
		\widetilde\chi_{J^\iota}(y+r\theta)
		\right]
		\right|
		&\lesssim_{N_1}
		\sum_{a=0}^{N_1}
		2^{-a(j-s\kappa)}
		2^{-j(N_1-a+1)}
		\\
		&\lesssim_{N_1}
		2^{s\kappa N_1}2^{-j(N_1+1)}.
	\end{aligned}
	\label{eq:lemma45-high-r-derivative}
\end{equation}

The support of $\psi_k$ implies that $|\xi|\approx2^{-k}$.  Integrating by
parts in $\xi$ gives
\begin{equation*}
	\begin{aligned}
		& \quad \int_{|\xi| \approx 2^{-k}} e^{i \inner{x-y-r\theta}{\xi}} \big( i \inner{\theta}{\xi} \big)^{-N_1} M_{j,k,\nu}(\xi)\,\dd \xi \\
		& = \int_{|\xi| \approx 2^{-k}} e^{i \inner{x-y-r\theta}{\xi}} \frac{\big( I - 2^{-2k} \Delta_\xi\big)^N \big[ \big( i \inner{\theta}{\xi} \big)^{-N_1} M_{j,k,\nu}(\xi) \big] }{\big( 1+2^{-2k} |x-y-r\theta|^2 \big)^N }\,\dd\xi,
	\end{aligned}
\end{equation*}
where $N = d$.

By the support properties of $1-\Psi$, 
\begin{equation*}
	\left|
	\left\langle e_\nu,\frac{\xi}{\abs\xi}\right\rangle
	\right|
	\geq 2^{1-s\gamma}.
\end{equation*}
Since $\theta\in E_\nu$ implies
$\abs{\theta-e_\nu}\leq2^{-s\gamma-2}$, we obtain
\begin{equation*}
	\left|
	\left\langle\theta,\frac{\xi}{\abs\xi}\right\rangle
	\right|
	\geq
	\left|
	\left\langle e_\nu,\frac{\xi}{\abs\xi}\right\rangle
	\right|
	-\abs{\theta-e_\nu}
	\gtrsim2^{-s\gamma}.
\end{equation*}
The support of $\psi_k$ gives $|\xi|\approx2^{-k}$.  Hence
\begin{equation*}
	\abs{\langle\theta,\xi\rangle}
	\gtrsim2^{-s\gamma-k}.
\end{equation*}
Furthermore, for every multi-index $|\alpha| \leq 2N$, 
\begin{equation*}
	\left|
	\partial_\xi^\alpha
	\left(\langle\theta,\xi\rangle^{-N_1}\right)
	\right|
	\lesssim
	2^{(s\gamma+k)(N_1+\abs\alpha)}.
\end{equation*}
Together with Lemma \ref{lem:sec6-M-derivative-bound}, this yields
\begin{equation}
	\begin{aligned}
		&\left|
		\big(I-2^{-2k}\Delta_\xi\big)^N
		\left[
		\langle\theta,\xi\rangle^{-N_1}
		M_{j,k,\nu}(\xi)
		\right]
		\right| \lesssim 2^{(s\gamma+k)N_1}2^{2Ns\gamma}.
	\end{aligned}
	\label{eq:lemma45-high-xi-derivative}
\end{equation}

Substituting \eqref{eq:lemma45-high-r-derivative} and
\eqref{eq:lemma45-high-xi-derivative} into
\eqref{eq:lemma46-D}, we obtain the pointwise kernel bound
\begin{equation}
	\begin{aligned}
		\abs{D_{j,k,\nu}(x,y)}
		\lesssim &
		2^{-j(N_1+1)}
		2^{s\kappa N_1}
		2^{(s\gamma+k)N_1}
		2^{2Ns\gamma}
		\\
		&\times
		\int_{E_\nu}\abs{\Omega(\theta)}
		\int_{r \approx 2^j}
		\int_{\abs\xi \approx 2^{-k}}
		\frac{\dd\xi\dd r\dd\sigma(\theta)}
		{\left(
			1+2^{-2k}\abs{x-y-r\theta}^{2}
			\right)^N}.
	\end{aligned}
	\label{eq:lemma45-high-pointwise-kernel}
\end{equation}

By \eqref{eq:lemma45-high-kernel-action} and Fubini's theorem,
\begin{equation*}
	\begin{aligned}
		&\norm{
			(b-b_Q) (I-G_\nu)  \Lambda_k^2  
			(I-P_{j-s\kappa}) \big[(T_{j,\nu} h_Q)\widetilde\chi_{J^\iota} \big]
		}_{L^1(\Rd)}
		\\
		&\quad\leq
		\int_Q\abs{h_Q(y)}
		\int_{\Rd}
		\abs{b(x)-b_Q}
		\abs{D_{j,k,\nu}(x,y)}
		\dd x\,\dd y.
	\end{aligned}
\end{equation*}
Insert \eqref{eq:lemma45-high-pointwise-kernel} and apply Lemma \ref{lem:sec6-BMO-kernel}. Since $|\xi| \approx 2^{-k}$, integration in frequency contributes $2^{-kd}$. Similarly, $r \approx 2^j$ means that integration in $r$ contributes $2^j$. Hence,
\begin{equation*}
	\begin{aligned}
		&\norm{
			(b-b_Q)
			\big(I-P_{j-s\kappa}\big)
			\Lambda_k^2(I-G_\nu)\big[(T_{j,\nu} h_Q)\widetilde\chi_{J^\iota} \big]
		}_{L^1(\Rd)}
		\\
		&\quad\lesssim 
		\big(s+j-k\big)
		\norm{b}_{\BMO}
		\norm{h_Q}_{L^1(\Rd)}
		\\
		&\qquad\times
		2^{-(j-k)N_1}
		2^{s\kappa N_1+s\gamma N_1+2Ns\gamma}
		\int_{E_\nu}\abs{\Omega(\theta)}\dd\sigma(\theta).
	\end{aligned}
\end{equation*}
Recall that $\norm{\Omega}_{L^\infty(\Sd)}\leq2^{s\eta}$ and
$\abs{E_\nu}\lesssim2^{-s\gamma(d-1)}$.  It follows that
\begin{equation*}
	\begin{aligned}
		&\norm{
			(b-b_Q)
			\big(I-P_{j-s\kappa}\big)
			\Lambda_k^2(I-G_\nu)\big[(T_{j,\nu} h_Q)\widetilde\chi_{J^\iota} \big]
		}_{L^1(\Rd)}
		\\
		&\quad\lesssim 
		\big(s+j-k\big)
		\norm{b}_{\BMO}
		\norm{h_Q}_{L^1(\Rd)}
		\\
		&\qquad\times
		2^{-s\gamma(d-1)}
		2^{-(j-k)N_1}
		2^{s\kappa N_1+s\gamma N_1+2Ns\gamma+s\eta}.
	\end{aligned}
\end{equation*}
Summing first over $k$ and using the high-frequency condition $k\leq j-s\varepsilon$ gives
\begin{equation*}
	\begin{aligned}
		&\sum_{k\leq j-s\varepsilon}
		\norm{
			(b-b_Q) (I-G_\nu) \Lambda_k^2 \big(I-P_{j-s\kappa}\big) \big[(T_{j,\nu} h_Q)\widetilde\chi_{J^\iota} \big]
		}_{L^1(\Rd)}
		\\
		&\quad\lesssim
		\norm{b}_{\BMO}
		\norm{h_Q}_{L^1(\Rd)}
		2^{-s\gamma(d-1)}2^{-s(N_1(\varepsilon-\kappa-\gamma)
			-2N\gamma-\eta)}.
	\end{aligned}
\end{equation*}
Summation in $\nu$ cancels the factor $2^{-s\gamma(d-1)}$.  Summing next in
$J,Q$ and applying Lemma \ref{lem:layered-bad-mass}, we obtain
\begin{equation}
	\begin{aligned}
		\norm{\mathcal E_{s,n,\iota}^{\mathrm{high}}}_{L^1(\Rd)}
		\lesssim 2^{-2n}2^{-s[N_1(\varepsilon-\kappa-\gamma)
			-2N\gamma-\eta]}
		\norm{b}_{\BMO}
		\norm{f}_{L^1(\Rd)}.
	\end{aligned}
	\label{eq:lemma45-high-L1-before-absorption}
\end{equation}
If $\varepsilon>\kappa+\gamma$, choosing $N_1$ sufficiently large yields the
required decay for the high-frequency contribution.

We next turn to the low-frequency contribution.  Fix the parameters in the
order
\begin{equation*}
	0<\eta\ll\gamma\ll\kappa<\varepsilon\ll1.
\end{equation*}
Define
\begin{equation*}
	\begin{aligned}
		\mathcal E_{s,n,\iota}^{\mathrm{low}}
		\coloneqq  \sum_{J,Q} \sum_\nu
		\sum_{k>j-s\varepsilon}
		&(b-b_Q)
		\big(I-P_{j-s\kappa}\big)
		\Lambda_k^2
		\big(I-G_\nu\big)
		\big[\varepsilon_J (T_{j,\nu} h_Q)\widetilde\chi_{J^\iota} \big].
	\end{aligned}
\end{equation*}
By Minkowski's inequality,
\begin{equation*}
	\begin{aligned}
		\norm{\mathcal E_{s,n,\iota}^{\mathrm{low}}}_{L^1(\Rd)}
		\leq \sum_{J,Q} \sum_\nu
		\sum_{k>j-s\varepsilon}
		\norm{
			(b-b_Q)
			\big(I-P_{j-s\kappa}\big)
			\Lambda_k^2
			\big(I-G_\nu\big)
			\big[(T_{j,\nu} h_Q)\widetilde\chi_{J^\iota} \big]
		}_{L^1(\Rd)}.
	\end{aligned}
\end{equation*}
Integration by parts in $\xi$ in \eqref{eq:lemma46-D} gives
\begin{equation*}
	\begin{aligned}
		D_{j,k,\nu}(x,y)
		=C_d
		\int_{E_\nu}\Omega(\theta)
		\int_0^\infty\frac{\varphi_j(r)}{r}
		\int_{\Rd}
		&e^{ i\langle x-y-r\theta,\xi\rangle}
		\widetilde\chi_{J^\iota}(y+r\theta)
		\\
		&\times
		\frac{M_{j,k,\nu}^{[2N]}(\xi)}
		{\left(1+2^{-2k}\abs{x-y-r\theta}^{2}\right)^N}
		\,\dd\xi\,\dd r\,\dd\sigma(\theta),
	\end{aligned}
\end{equation*}
where
\begin{equation*}
	M_{j,k,\nu}^{[2N]}(\xi)
	\coloneqq 
	(I-2^{-2k} \Delta_\xi)^N
	M_{j,k,\nu}(\xi).
	\label{eq:lemma45-low-L}
\end{equation*}

Let $y_Q$ be the center of $Q$.  Since $h_Q$ has mean zero,
\begin{equation*}
	\int_QD_{j,k,\nu}(x,y)h_Q(y)\,\dd y
	=
	\int_Q
	\left[
	D_{j,k,\nu}(x,y)-D_{j,k,\nu}(x,y_Q)
	\right]
	h_Q(y)\,\dd y.
\end{equation*}
For brevity, write
\begin{equation*}
	W(x-y-r\theta)
	\coloneqq 
	\left(1+2^{-2k}\abs{x-y-r\theta}^{2}\right)^{-N}.
\end{equation*}
Decompose
\begin{equation*}
	D_{j,k,\nu}(x,y)-D_{j,k,\nu}(x,y_Q)
	=
	D_{j,k,\nu}^{(1)}(x,y)
	+D_{j,k,\nu}^{(2)}(x,y)
	+D_{j,k,\nu}^{(3)}(x,y),
\end{equation*}
where the first term records the difference of phases:
\begin{equation*}
	\begin{aligned}
		D_{j,k,\nu}^{(1)}(x,y)
		\coloneqq C_d
		& \int_{E_\nu}\Omega(\theta)
		\int_0^\infty\frac{\varphi_j(r)}{r}
		\int_{\Rd} e^{ i\langle x-r\theta,\xi\rangle}
		\left(
		e^{- i\langle y,\xi\rangle}
		-e^{- i\langle y_Q,\xi\rangle}
		\right)
		\\
		& \quad \times
		\widetilde\chi_{J^\iota}(y+r\theta)
		W(x-y-r\theta)
		M_{j,k,\nu}^{[2N]}(\xi)
		d \xi d r d \sigma(\theta);
	\end{aligned}
\end{equation*}
the second term records the difference of the smooth cutoffs:
\begin{equation*}
	\begin{aligned}
		D_{j,k,\nu}^{(2)}(x,y)
		&  \coloneqq C_d
		\int_{E_\nu}\Omega(\theta)
		\int_0^\infty\frac{\varphi_j(r)}{r}
		\int_{\Rd} e^{ i\langle x-y_Q-r\theta,\xi\rangle}
		\\
		&\qquad \quad \times
		\left[
		\widetilde\chi_{J^\iota}(y+r\theta)
		-\widetilde\chi_{J^\iota}(y_Q+r\theta)
		\right] \\ 
		&  \qquad \quad \times W(x-y-r\theta)
		M_{j,k,\nu}^{[2N]}(\xi)
		d \xi d r d \sigma(\theta);
	\end{aligned}
\end{equation*}
and the third term records the difference in the centers of the denominators:
\begin{equation*}
	\begin{aligned}
		D_{j,k,\nu}^{(3)}(x,y)
		&  \coloneqq C_d
		\int_{E_\nu}\Omega(\theta)
		\int_0^\infty\frac{\varphi_j(r)}{r}
		\int_{\Rd} e^{ i\langle x-y_Q-r\theta,\xi\rangle}
		\widetilde\chi_{J^\iota}(y_Q+r\theta)
		\\
		& \qquad \quad \times
		\left[
		W(x-y-r\theta)-W(x-y_Q-r\theta)
		\right]
		M_{j,k,\nu}^{[2N]}(\xi)
		d \xi d r d \sigma(\theta).
	\end{aligned}
\end{equation*}

For $D_{j,k,\nu}^{(1)}$, the inequalities
$\abs{e^{iu}-e^{iv}}\leq\abs{u-v}$,
$\abs{y-y_Q}\lesssim2^{j-s}$, and $\abs\xi\approx2^{-k}$ give
\begin{equation*}
	\left|
	e^{- i\langle y,\xi\rangle}
	-e^{- i\langle y_Q,\xi\rangle}
	\right|
	\lesssim
	2^{j-s-k}.
\end{equation*}
The size bound for the kernel and the microlocal decomposition give
\begin{equation}
	\int_{E_\nu}\abs{\Omega(\theta)}d \sigma(\theta)
	\lesssim
	2^{-s\gamma(d-1)+s\eta}.
	\label{eq:lemma45-low-angular-mass}
\end{equation}
Combining Lemmas \ref{lem:sec6-M-derivative-bound} and \ref{lem:sec6-BMO-kernel}, we obtain
\begin{equation*}
	\begin{aligned}
		&\int_{\Rd}
		\abs{b(x)-b_Q}
		\abs{D_{j,k,\nu}^{(1)}(x,y)}d x
		\\
		&\quad\lesssim
		\big(s+\abs{j-k}\big)
		\norm{b}_{\BMO}
		2^{j-s\kappa-k}2^{2Ns\gamma}
		2^{-s\gamma(d-1)+s\eta}
		2^{j-s-k}.
	\end{aligned}
\end{equation*}
Here integration over the frequency region contributes $O(2^{-kd})$, while
Lemma \ref{lem:sec6-BMO-kernel} contributes $2^{kd}$;
these factors cancel. Moreover,
\begin{equation*}
	\int_{r\approx2^j}\frac{\abs{\varphi_j(r)}}{r}d r
	\lesssim1.
\end{equation*}

For $D_{j,k,\nu}^{(2)}$, the mean value theorem and
\begin{equation*}
	\norm{\nabla\widetilde\chi_{J^\iota}}_{L^\infty(\Rd)}
	\lesssim2^{-j+s\kappa},
\end{equation*}
we obtain
\begin{equation*}
	\begin{aligned}
		&\left|
		\widetilde\chi_{J^\iota}(y+r\theta)
		-\widetilde\chi_{J^\iota}(y_Q+r\theta)
		\right|
		\\
		&\qquad\leq
		\abs{y-y_Q}
		\norm{\nabla\widetilde\chi_{J^\iota}}_{L^\infty(\Rd)}
		\lesssim
		2^{j-s}2^{-j+s\kappa}
		=2^{-(1-\kappa)s}.
	\end{aligned}
\end{equation*}
As in the estimate of $D_{j,k,\nu}^{(1)}$, Lemmas \ref{lem:sec6-M-derivative-bound} and \ref{lem:sec6-BMO-kernel},
together with
\eqref{eq:lemma45-low-angular-mass}, give
\begin{equation*}
	\begin{aligned}
		&\int_{\Rd}
		\abs{b(x)-b_Q}
		\abs{D_{j,k,\nu}^{(2)}(x,y)}d x
		\\
		&\quad\lesssim
		\big(s+\abs{j-k}\big)
		\norm{b}_{\BMO}
		2^{j-s\kappa-k}2^{2Ns\gamma}
		2^{-s\gamma(d-1)+s\eta}
		2^{-(1-\kappa)s}.
	\end{aligned}
\end{equation*}

For $D_{j,k,\nu}^{(3)}$, direct differentiation gives
\begin{equation*}
	\begin{aligned}
		\left|
		\nabla_z
		\left(1+2^{-2k}\abs{x-z-r\theta}^{2}\right)^{-N}
		\right| & \lesssim_N 
		2^{-k}
		\frac{2^{-k}\abs{x-z-r\theta}}
		{\left(1+2^{-2k}\abs{x-z-r\theta}^{2}\right)^{N+1}} \\
		& \lesssim \frac{2^{-k}}{\left(1+2^{-2k}\abs{x-z-r\theta}^{2}\right)^N}.
	\end{aligned}
\end{equation*}
Set
\begin{equation*}
	y_t \coloneqq ty+(1-t)y_Q,
	\qquad 0\leq t\leq1.
\end{equation*}
By the fundamental theorem of calculus,
\begin{equation*}
	\begin{aligned}
		\abs{W(x-y-r\theta)-W(x-y_Q-r\theta)}
		\lesssim \int_0^1
		\frac{2^{j-s-k}}
		{\left(1+2^{-2k}\abs{x-y_t-r\theta}^{2}\right)^N}
		d t.
	\end{aligned}
\end{equation*}
As for the preceding two terms, Lemmas \ref{lem:sec6-M-derivative-bound} and \ref{lem:sec6-BMO-kernel}, together with \eqref{eq:lemma45-low-angular-mass}, yield
\begin{equation*}
	\begin{aligned}
		&\int_{\Rd}
		\abs{b(x)-b_Q}
		\abs{D_{j,k,\nu}^{(3)}(x,y)}d x
		\\
		&\quad\lesssim
		\big(s+\abs{j-k}\big)
		\norm{b}_{\BMO}
		2^{j-s\kappa-k}2^{2Ns\gamma}
		2^{-s\gamma(d-1)+s\eta}
		2^{j-s-k}.
	\end{aligned}
\end{equation*}

Combining the estimates for $D^{(1)}$-$D^{(3)}$, we obtain
\begin{equation*}
	\begin{aligned}
		&\norm{
			(b-b_Q)
			\big(I-P_{j-s\kappa}\big)
			\Lambda_k^2(I-G_\nu)\big[(T_{j,\nu} h_Q)\widetilde\chi_{J^\iota} \big]
		}_{L^1(\Rd)}
		\\
		&\quad\lesssim
		\big(s+\abs{j-k}\big)
		\norm{b}_{\BMO}
		2^{j-s\kappa-k}2^{2Ns\gamma}
		2^{-s\gamma(d-1)+s\eta}
		\\
		&\qquad\times
		\Big[
		2^{j-s-k}+2^{-(1-\kappa)s}
		\Big]
		\norm{h_Q}_{L^1(\Rd)}.
	\end{aligned}
\end{equation*}
Summing over $k>j-s\varepsilon$ gives
\begin{equation*}
	\begin{aligned}
		&\sum_{k>j-s\varepsilon}
		\norm{
			(b-b_Q)
			\big(I-P_{j-s\kappa}\big)
			\Lambda_k^2(I-G_\nu)\big[(T_{j,\nu} h_Q)\widetilde\chi_{J^\iota} \big]
		}_{L^1(\Rd)}
		\\
		& \quad \lesssim \norm{b}_{\BMO}
		\norm{h_Q}_{L^1(\Rd)}
		2^{-s\gamma(d-1)} \\
		& \qquad \times \left[
		2^{-s(1+\kappa-2\varepsilon-\eta-2N\gamma)}+2^{-s(1-\varepsilon-\eta-2N\gamma)}
		\right],
	\end{aligned}
\end{equation*}
Summation in $\nu$ cancels the factor $2^{-s\gamma(d-1)}$.  Lemma
\ref{lem:layered-bad-mass} then gives
\begin{equation*}
	\begin{aligned}
		& \sum_{J\in\mathcal I_{s,\iota}^{\#,n}}
		\sum_{Q\in\mathfrak Q_{J-s}}
		\sum_\nu \sum_{k > j-s\varepsilon} \norm{
			(b-b_Q)
			\big(I-P_{j-s\kappa}\big)
			\Lambda_k^2(I-G_\nu)\big[(T_{j,\nu} h_Q)\widetilde\chi_{J^\iota} \big]
		}_{L^1(\Rd)} \\
		& \quad \lesssim 2^{-2n}
		\left[
		2^{-s(1+\kappa-2\varepsilon-\eta-2N\gamma)}+2^{-s(1-\varepsilon-\eta-2N\gamma)}
		\right] \norm{b}_{\BMO}
		\norm{f}_{L^1(\Rd)}.
	\end{aligned}
\end{equation*}
Thus it suffices that
\begin{equation*}
	\begin{aligned}
		& 1+\kappa-2\varepsilon-\eta-2N\gamma>0, \\
		& \ 1-\varepsilon-\eta-2N\gamma>0,
	\end{aligned}
\end{equation*}
which gives the required decay for the low-frequency contribution.

Combining the $L^1$ estimates for the high- and low-frequency parts yields
\begin{equation}
	\label{eq:lemma46-L1}
	\begin{aligned}
		\| \Esn \|_{L^1} & \lesssim 2^{-2n} \|b\|_{\BMO} \|f\|_{L^1} \\
		& \quad\times\big[
		2^{-s[N_1(\varepsilon-\kappa-\gamma)-2N\gamma-\eta]}
		+2^{-s(1+\kappa-2\varepsilon-\eta-2N\gamma)}
		+2^{-s(1-\varepsilon-\eta-2N\gamma)}\big].
	\end{aligned}
\end{equation}

\noindent{\bf Interpolation.}
Interpolating \eqref{eq:lemma46-L3} with \eqref{eq:lemma46-L1}, we obtain
\begin{equation*}
	\begin{aligned}
		\|\Esn\|_{L^2} & \lesssim \| \Esn \|_{L^1}^{1/4} \| \Esn \|_{L^3}^{3/4} \\
		& \lesssim 2^{-n} (2^{-s\theta_1} + 2^{-s \theta_2} + 2^{-s \theta_3}) \|b\|_{\BMO} \|f\|_{L^1}^{1/2},
	\end{aligned}
\end{equation*}
where
\begin{align*}
	\theta_1 & = \frac{1}{4} \big[ N_1(\varepsilon-\kappa-\gamma)-2N\gamma-\eta \big] - \frac{3}{4} \big[ \gamma(N+d-1) + \eta \big], \\
	\theta_2 & = \frac{1}{4} \big[ 1+\kappa-2\varepsilon-\eta-2N\gamma \big] - \frac{3}{4} \big[ \gamma(N+d-1) + \eta \big], \\
	\theta_3 & = \frac{1}{4} \big[ 1-\varepsilon-\eta-2N\gamma \big] - \frac{3}{4} \big[ \gamma(N+d-1) + \eta \big]. \\
\end{align*}
Choose $\varepsilon$ and $\kappa$ sufficiently small with
$0<\kappa<\varepsilon$, then choose $\gamma$ and $\eta$ successively smaller,
and finally take $N_1$ sufficiently large.  With these choices,
$\theta_1,\theta_2,\theta_3>0$, proving Lemma
\ref{lem:sec42-remainder-microlocal}.
\end{proof}
\vspace{0.3cm}
\begin{proof}[Proof of  Theorem \ref{thm:main}]
The five estimates in Subsection
\ref{subsec:smoothing-microlocal-decomposition} now prove Proposition
\ref{prop:sec4-reindexed-linearized-estimate}.  Consequently, Proposition
\ref{prop:sec3-linearized-signed-L2}, the Rademacher-Menshov reduction, and
Proposition \ref{prop:small-omega-key-L2} follow in succession.  The reductions
in Section \ref{sec:reduction-linearization} then prove Theorem
\ref{thm:dyadic}, and hence Theorem \ref{thm:main}.
\end{proof}


\end{document}